\newtheorem{lemma}{Lemma}
\newtheorem{theorem}{Theorem}
\newtheorem{corollary}{Corollary}
\theoremstyle{definition}
\newtheorem{proposition}{Proposition}
\newtheorem{remark}{Remark}
\title{Sharp Deviations Bounds for Dirichlet Weighted Sums with Application to analysis of Bayesian algorithms}
\author{Denis\,Belomestny, Pierre\,M\'enard, Alexey\,Naumov, Daniil\,Tiapkin, Michal\,Valko}
\date{January 2023}
\begin{document}

\maketitle

\doparttoc 
\faketableofcontents 

\begin{abstract}
In this work, we derive sharp non-asymptotic deviation bounds for weighted sums of Dirichlet random variables. These bounds are based on a novel integral representation of the density of a weighted Dirichlet sum. This representation allows us to obtain a Gaussian-like approximation for the sum distribution using geometry and complex analysis methods. Our results generalize similar bounds for the Beta distribution obtained in the seminal paper \cite{alfers1984normal}. Additionally, our results can be considered a sharp non-asymptotic version of the inverse of Sanov's theorem studied by \cite{ganesh1999inverse} in the Bayesian setting. Based on these results, we derive new deviation bounds for the Dirichlet process posterior means with application to Bayesian bootstrap. Finally, we apply our estimates to the analysis of the Multinomial Thompson Sampling (TS) algorithm in multi-armed bandits and significantly sharpen the existing regret bounds by making them independent of the size of the arms distribution support. 
\end{abstract}

\section{Introduction}
\label{sec:introduction}
One of the main multivariate distributions, the Dirichlet distribution is constrained to the simplex of a multidimensional space. This distribution has a lot of applications. The modeling of compositional data \citep{hijazi2009modelling}, parametric and non-parametric Bayesian statistics \citep{congdon2014applied,ghosal2017fundamentals}, topic modelling \citep{blei2003latent,teh2006hierarchical}, reinforcement learning \citep{osband2013more,osband2017why}, statistical genetics \citep{lange1995applications}, reliability \citep{somerville1997bayesian}, probabilistic constrained programming models \citep{dentcheva2006optimization}  are just a few examples where it can be found.  It possesses a variety of  probabilistic characterization properties, including conditional distributions, zero regression, and independence features.   Among these characteristics, we would like to draw attention to the fact that the Dirichlet distribution  is a conjugate prior of the parameters of the multinomial distribution in Bayesian statistics. 
\par
In this work, we are concerned with deviation bounds for weighted sums of Dirichlet random variables. Such weighted sums naturally appear in the analysis of Bayesian bootstrap methods \citep{rubin1981bayesian} as an approximation of the posterior mean. We aim to derive tight bounds on the crossing probabilities for such weighted sums, which optimally depend on the dimension of the underlying Dirichlet distribution, the number of summands. Generally, the computation of probabilities in multidimensional spaces, typically described by multiple integrals, is a particularly challenging topic in probability. The dimensional effect, sometimes known as the ``curse'' of dimensionality, is one of these problems defining characteristics that adds to its complexity. It states that each increase in dimension results in significant computational challenges. Given this phenomenon, any straightforward representation of and bound on multivariate probabilities can provide insightful data for theoretical research and real-world applications.  
\par
 The contribution of this paper is three-fold. First, we derive a novel integral representation for the density of a weighted sum of Dirichlet distributed random variables. This representation generalizes and sharpens the available representations; see Section~2 in \cite{ng2011dirichlet}. Second, based on this representation, we obtained a two-sided Gaussian-type bound for the deviations from a mean featuring the optimal dependence on the sum of Dirichlet parameters (``sample size'') and the dimension of the Dirichlet distribution. If this dimension equals $2$, then we arrive at the Beta distribution. In this respect, our results generalize the non-asymptotic bounds of \cite{alfers1984normal} to the case of Dirichlet distribution. Note that the bounds in \cite{alfers1984normal} are much more precise than those obtained from CLT-based approximations; see \cite{zubkov2013complete} for a comparative study. Applying our results to the Bayesian inference for measures on finite support, we obtain the non-asymptotic version of the so-called inverse of Sanov's theorem of \citet{ganesh1999inverse}. Third, we apply the derived estimates to analyze the Multinomial Thompson Sampling algorithm in bandits. The resulting instance-dependent regret bounds are much tighter than all previously known results in the literature; see \citet{riou20a}.  In particular, they feature an optimal leading term and a remaining term independent of the state space dimension.  
\par
The existing results on crossing probabilities (probabilities of the fixed size deviations from the mean)  for weighted Dirichlet sums could be more extensive in the literature. 
Let us first mention the exact formula for crossing probabilities in \cite{cho2001volume} for the case of Dirichlet distributions with equal parameters. Unfortunately, this formula is not very informative, and it seems complicated to derive proper bounds based on this result.   In \cite{baudry2021optimality}, some bounds on the Dirichlet crossing probabilities were obtained. Note that the lower bound (Corollary C.5.1) contains an additional exponential (in dimension) factor making this bound rough for large dimensions. This additional exponential factor comes from a product-like estimate for the density of the Dirichlet random vector. Applying the known probabilistic results (e.g., large deviation bounds or concentration inequalities) does not lead to the desired bound for several reasons. First, the components of the Dirichlet distribution are strongly dependent, making all probabilistic bounds for the sums of iid random variables inapplicable. Second, the known bounds for dependent random variables are rare in the literature and can not be used to obtain the sharp probabilistic boundary-crossing bounds we need. For example, in \cite{marchal2017sub} subgaussianity of the Dirichlet distribution was proved. However, the proxy variance in the corresponding subgaussian bound depends on the maximum of parameters of the underlying Dirichlet distribution leading to a rough estimate for the crossing probabilities.
  
 \paragraph*{Notations}
 Let $(\Xset,\Xsigma)$ be a measurable space and $\Pens(\Xset)$ be the set of all probability measures on this space. For $p \in \Pens(\Xset)$ we denote by $\E_p$ the expectation w.r.t. $p$. For random variable $\xi: \Xset \to \R$ notation $\xi \sim p$ means $\operatorname{Law}(\xi) = p$. We also write $\E_{\xi \sim p}$ instead of $\E_{p}$. For independent (resp. i.i.d.) random variables $\xi_\ell \mysim p_\ell$ (resp. $\xi_\ell \mysimiid p$), $\ell = 1, \ldots, d$, we will write $\E_{\xi_\ell \mysim p_\ell}$ (resp. $\E_{\xi_\ell \mysimiid p}$), to denote expectation w.r.t. product measure on $(\Xset^d, \Xsigma^{\otimes d})$. For any $p, q \in \Pens(\Xset)$ the Kullback-Leibler divergence $\KL(p, q)$ is given by
$$
\KL(p, q) = \begin{cases}
\E_{p}[\log \frac{\rmd p}{\rmd q}], & p \ll q, \\
+ \infty, & \text{otherwise}.
\end{cases} 
$$
For any $p \in \Pens(\Xset)$ and $f\colon \Xset \to \R$, $p f = \E_p[f]$. In particular, for any $p \in \simplex_d$ and $f\colon \{0, \ldots, d\}   \to  \R$, $pf =  \sum_{\ell = 0}^d f(\ell) p(\ell)$. Define $\Var_{p}(f) = \E_{s' \sim p} \big[(f(s')-p f)^2\big] = p[f^2] - (pf)^2$. 

For $d\in\N_{++}$, for any $\alpha=(\alpha_0,\ldots,\alpha_{d})\in\R_{++}^{d+1}$ we denote by $\Dir(\alpha)$ the Dirichlet distribution over the simplex $\Delta_d$ defined by density of the first $d$ components $p_\alpha(x_0,\ldots,x_{d-1}) = \prod_{i=0}^d x_i^{\alpha_i - 1}$ with a convention $x_{d} = 1 - \sum_{i=0}^{d-1} x_i$. In particular for $d=1$ the Dirichlet random vector has a form $(\xi, 1-\xi)$ for $\xi$ follows Beta distribution denoted by $\mathrm{Beta}(\alpha_0,\alpha_1)$.

\section{Main Results}
\label{sec:main_results}

Let $\cP[0,b]$ be the space of all probability measures supported on the segment $[0,b]$. Then we define the minimal Kullback-Leibler divergence for a measure $\nu \in \cP[0,b]$  and a real number $\mu\in [0,b],$
 \begin{eqnarray}
 \label{eq:Kinf}
\Kinf(\nu, \mu) = \inf\left\{  \KL(\nu, \eta): \eta \in \cP[0,b],\, \E_{X \sim \eta}[X] \geq \mu\right\}\,.
\end{eqnarray}
This quantity can be interpreted as a distance  from (projection of) the measure $\nu$ to the set of all measures with expectation at least $\mu$ where the distance is measured by the  KL-divergence. The measure $\eta$  solving the optimization problem  \eqref{eq:Kinf} is called moment projection ($M$-projection) or reversed information projection ($rI$-projection), see e.g.  \citep{csiszar2003information,bishop2006pattern} and \citep{murphy2022probabilistic}. Since KL-divergence is not symmetric, it is natural to compare this type of projection to a more common information projection ($I$-projection) 
 \[
    I(\nu , \mu) = \inf\{ \KL(\eta, \nu) : \eta \in \cP[0,b], \, \E_{X \sim \eta}[X] \geq \mu\}\,,
 \]
 that appears, for example, in Sanov-type deviation bounds \citep{sanov1961probability}.
The $I$-projections have an excellent geometric interpretation because $\KL$-divergence can be viewed as a Bregman divergence. The $M$-projections are not Bregman divergences and lack geometric interpretation. However, they are deeply connected to the maximum likelihood estimation when the measure $\nu$ is the empirical measure \citep[Lemma 3.1]{csiszar2004information} of a sample. Additionally, $M$-projections naturally appear as a rate function for a large deviation principle in a Bayesian framework \citep{ganesh1999inverse}, and also in lower bounds for the multi-armed bandit, see \citep{Lai1985asymptotically,burnetas1996optimal}. For an additional exposition on the multi-armed bandit,  see Section~\ref{sec:multinomial_ts}.
 \par
To simplify  notation in the sequel, we define the version of the minimal Kullback-Leibler distance for a finite support measures. Let us fix a function $f: \{0,\ldots,m\} \mapsto [0,b]$ and define for $p\in\simplex_{m}$  and $\mu\in\R,$
 \[
    \Kinf(p,\mu,f) = \inf\left\{  \KL(p,q): q\in\simplex_{m}, qf \geq \mu\right\}\,.
 \]
Next we present our main results on crossing probabilities. These results are summarized in the following theorem.
\begin{theorem}\label{thm:bound_dbc} 
For any $\alpha = (\alpha_0, \alpha_1, \ldots, \alpha_m) \in \R_{++}^{m+1}$ define  $\up \in \simplex_{m}$ with $\up(\ell) = \alpha_\ell/\ualpha, \ell = 0, \ldots, m,$ and $\ualpha = \sum_{j=0}^m \alpha_j$. Let $\varepsilon \in (0,1)$ and assume that $\alphamin = \min(\alpha_0, \alpha_m) \geq c_0 \cdot \varepsilon^{-2}$ for an absolute constant $c_0>0$ defined in \eqref{eq:c0_definition}. Let $f \colon \{0,\ldots,m\} \to [0,\ub]$ be any mapping  such that $f(0) = 0$, $f(m) = \ub.$ 
\begin{description}
\item [\textbf{Lower Bound}] Define $\alpha^+ = (\alpha_0, \alpha_1, \ldots, \alpha_{m-1}, \alpha_m+1),$ then  for any $\mu \in (\up f,  \ub),$ it holds 
\begin{eqnarray}
\label{eq:main-ineq-low}
 \P_{w \sim \Dir(\alpha^+)}(wf \geq \mu) \geq (1 - \varepsilon)\P_{\zeta \sim \cN(0,1)}\left(\zeta \geq \sqrt{2 \ualpha \Kinf(\up, \mu, f)}\right).
\end{eqnarray}
 \item [\textbf{Upper Bound}] Set $\alpha^- = (\alpha_0 + 1, \alpha_1, \ldots, \alpha_{m-1}, \alpha_m),$ then for any $\mu \in (\up f,  \ub),$ it holds 
\begin{eqnarray}
\label{eq:main-ineq-upper}
\P_{w \sim \Dir(\alpha^-)}(wf \geq \mu) \leq (1 + \varepsilon)\P_{\zeta \sim \cN(0,1)}\left(\zeta \geq \sqrt{2 \ualpha \Kinf(\up, \mu, f)}\right).
 \end{eqnarray}
  \end{description}  
\end{theorem}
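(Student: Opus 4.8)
The plan is to reduce the $(m{+}1)$-dimensional Dirichlet crossing probability to a one-dimensional integral via the novel integral representation for the density of the weighted sum $wf$ promised in the introduction, and then to analyze that integral by a saddle-point / Laplace argument that produces the Gaussian tail. Concretely, write $S = wf = \sum_{\ell=0}^m f(\ell) w(\ell)$ where $w \sim \Dir(\alpha)$. The first step is to record the integral representation of the density $g_S$ of $S$ on $[0,\ub]$; because $f(0)=0$ and $f(m)=\ub$, the endpoints of the support are attained and the density has Beta-like behaviour $g_S(s) \asymp s^{\alpha_0 - 1}(\ub - s)^{\alpha_m - 1}$ near $0$ and $\ub$, with a smooth, strictly positive analytic factor in between coming from the intermediate atoms. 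The shift $\alpha \mapsto \alpha^+$ (resp. $\alpha^-$) is exactly what multiplies the density by an extra factor $(\ub - s)$ (resp. $s$) up to normalization, which is the standard Beta trick from \cite{alfers1984normal}; this is what makes the one-sided bounds clean.

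Next I would pass to the cumulant/rate-function picture. For fixed $\mu \in (\up f, \ub)$ let $q^\star \in \simplex_m$ be the $M$-projection attaining $\Kinf(\up,\mu,f)$, so that $\KL(\up, q^\star) = \Kinf(\up,\mu,f)$ and $q^\star f = \mu$ (the constraint is active since $\mu > \up f$). The point is that $\Kinf(\up,\mu,f)$ is, up to the factor $\ualpha$, the large-deviation rate for $S$ crossing $\mu$, and moreover the exponent has a clean local expansion: writing $\mu = q^\star f$ and differentiating the constrained problem, $\frac{\partial}{\partial \mu}\Kinf(\up,\mu,f) = \lambda^\star$, the optimal dual variable, and the second derivative is $1/\Var_{q^\star}(f) > 0$. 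Thus $\ualpha\Kinf(\up,\mu,f)$ is smooth and strictly convex in $\mu$ on $(\up f,\ub)$, which is what lets the Gaussian comparison $\P(\zeta \ge \sqrt{2\ualpha\Kinf})$ track the true tail uniformly.

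Then comes the main estimate. Substituting the integral representation, $\P_{w\sim\Dir(\alpha^+)}(S \ge \mu) = \int_\mu^{\ub} g_{S^+}(s)\,\rmd s$; I would rewrite $g_{S^+}(s) = \exp(-\ualpha \,\psi(s))\cdot h(s)$ where $\psi$ is (essentially) the rate function with $\psi(\mu) = \Kinf(\up,\mu,f)$, $\psi'(\mu) = \lambda^\star > 0$, and $h$ is the slowly-varying analytic prefactor, and then apply a Laplace/Watson-type estimate: the integral is dominated by a neighbourhood of the left endpoint $s = \mu$ of width $\Theta(1/(\ualpha\lambda^\star))$, giving $\int_\mu^\ub g_{S^+} \approx \frac{h(\mu)}{\ualpha\lambda^\star}\exp(-\ualpha\Kinf)$, and the Gaussian tail $\P(\zeta \ge \sqrt{2\ualpha\Kinf})$ has matching asymptotics $\frac{1}{\sqrt{4\pi\ualpha\Kinf}}\exp(-\ualpha\Kinf)$; matching these two requires identifying the prefactors, which is where the Beta normalization constants $B(\alpha_0,\ualpha-\alpha_0)$ and the geometry of $q^\star$ enter. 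The hypothesis $\alphamin \ge c_0\varepsilon^{-2}$ is precisely the budget needed so that (i) the Laplace remainder, (ii) the error in replacing $\psi$ by its quadratic model, and (iii) the discrepancy between the discrete exponent and its Gaussian surrogate are each $O(\varepsilon)$ in relative terms; the constant $c_0$ from \eqref{eq:c0_definition} is chosen to absorb all three. The lower and upper bounds differ only in the direction of these estimates and in using $\alpha^+$ versus $\alpha^-$ so that the monotone prefactor $(\ub-s)$ or $s$ pushes the inequality the right way, exactly as in \cite{alfers1984normal}.

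The step I expect to be the genuine obstacle is controlling the analytic prefactor $h(s)$ uniformly over $s \in (\mu,\ub)$ and over all admissible $f$ and $\alpha$: unlike the pure Beta case, $h$ is a contour integral whose behaviour depends on the configuration of the atoms $f(\ell)$ and weights $\alpha_\ell$, and showing it is non-vanishing with controlled logarithmic derivative — so that it does not disturb the $O(\varepsilon)$ accounting — is where the complex-analysis / geometry input from the integral representation is really needed. Everything else (the convexity of $\Kinf$, the Laplace estimate, the Gaussian tail asymptotics) is standard once that prefactor is tamed.
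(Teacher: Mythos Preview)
Your high-level plan is right and matches the paper: obtain a pointwise formula for the density of $wf$ via the integral representation (Proposition~\ref{prop:density_linear_dirichlet}), expand it by a saddle-point argument (Proposition~\ref{prop:asymptotics_integral}) to get density bounds of the shape $p_{Z^\pm}(u) \gtrless (1\mp\varepsilon)\sqrt{\tfrac{\ualpha}{2\pi}}\,C^\pm(u)\,\exp(-\ualpha\Kinf(\up,u,f))$, and then integrate over $u\in(\mu,\ub)$. The role of the shifts $\alpha^\pm$ and the identity $\partial_u\Kinf=\lambda^\star$ are also correctly identified.

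The genuine gap is in how you pass from the pointwise density bound to the Gaussian tail. You propose a Laplace/Watson estimate at the left endpoint $u=\mu$ and then ``match prefactors'' with the Mills ratio $\P(\zeta\ge\sqrt{2\ualpha\Kinf})\sim (4\pi\ualpha\Kinf)^{-1/2}\rme^{-\ualpha\Kinf}$. That would give asymptotic equivalence for fixed $\mu$ as $\ualpha\to\infty$, but the theorem asserts a \emph{uniform} multiplicative bound for every $\mu\in(\up f,\ub)$, including $\mu\downarrow\up f$ where $\Kinf\to 0$ and the Gaussian tail is $\tfrac12$, and $\mu\uparrow\ub$ where $\lambda^\star\to 1/(\ub-\mu)$. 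An endpoint Laplace estimate with prefactor matching will not produce a clean $(1\pm\varepsilon)$ factor uniformly; the error terms would depend on $\mu$ through $\lambda^\star$ and $\Kinf$ in a way you have not controlled.

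What the paper does instead is an \emph{exact} change of variables $t^2/2=\Kinf(\up,u,f)$, $t\,\rmd t=\lambda^\star\,\rmd u$, which turns $\int_\mu^{\ub} p_{Z^\pm}(u)\,\rmd u$ into $\int_{\sqrt{2\Kinf(\up,\mu,f)}}^{\infty} D^\pm(u)\sqrt{\tfrac{\ualpha}{2\pi}}\,\rme^{-\ualpha t^2/2}\,\rmd t$ with
\[
D^\pm(u)=\sqrt{\frac{2\Kinf(\up,u,f)}{(\lambda^\star)^2\sigma^2\,(1\mp\lambda^\star(\ub-u\ \text{or}\ -u))^2}}\,,\qquad \sigma^2=\E_{\up}\!\left[\Big(\tfrac{f(X)-u}{1-\lambda^\star(f(X)-u)}\Big)^2\right].
\]
The comparison with the Gaussian tail is then a pointwise inequality $D^+(u)\ge 1$ and $D^-(u)\le 1$, proved as Lemma~\ref{lem:kinf_bounds} by a second-order Taylor expansion of $\phi_u(\lambda)=\E_{\up}[\log(1-\lambda(f(X)-u))]$ between $0$ and $\lambda^\star$. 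This is the missing idea: the saddle-point prefactor $1/((1-\lambda^\star(\ub-u))\sigma)$ coming from the $\alpha^+$ shift is exactly what makes the Jacobian ratio $\ge 1$, and dually for $\alpha^-$. No asymptotic prefactor matching is needed, and the only place $\varepsilon$ enters is the density-level saddle-point remainder, which is where the condition $\alphamin\ge c_0\varepsilon^{-2}$ is spent. Your worry about ``controlling the analytic prefactor uniformly'' is thus resolved not by complex analysis on $h$, but by the elementary two-sided inequality of Lemma~\ref{lem:kinf_bounds}.
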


Note that the theorem above holds only for $\mu \in (\up f, b)$. We can extend this theorem to any $\mu \in (0, b)$ by introducing an additional definition that is similar to one used by \cite{alfers1984normal} in the case of beta distribution. First, we define the quantity
\[
    \Kinfstar(p,\mu, f) = \inf\{  \KL(p,q) : q \in \simplex_m, qf = \mu\}
\]
that measures the reversed KL-transportation cost of moving a measure $p$ to a set of measures with expectation  $\mu$. In fact, for $\mu \geq pf,$ we have $\Kinfstar(p,\mu, f) = \Kinf(p, \mu, f)$ and  $\Kinfstar(p,\mu, f) = \Kinf(p, \ub-\mu, \ub - f)$ for $\mu < pf$. Then we define 
\[
    A(p, \mu, f) =  \mathrm{sgn}(\mu - pf) \cdot \sqrt{2 \Kinfstar(p,\mu, f)}.
\]
This function is a bijection between a segment $[0,b]$ and $\R$ for any non-degenerate $p \in \simplex_m$.
\begin{corollary}\label{cor:bound_dbc_all_u}
    For any $\alpha = (\alpha_0, \alpha_1, \ldots, \alpha_m) \in \R_{++}^{m+1}$ define  $\up \in \simplex_{m}$ with $\up(\ell) = \alpha_\ell/\ualpha, \ell = 0, \ldots, m,$ and $\ualpha = \sum_{j=0}^m \alpha_j$. Let $\varepsilon \in (0,1)$ and assume that $\alphamin = \min(\alpha_0, \alpha_m) \geq c_0 \cdot \varepsilon^{-2}$ for an absolute constant $c_0>0$ defined in \eqref{eq:c0_definition}. Let $f \colon \{0,\ldots,m\} \to [0,\ub]$ be any mapping  such that $f(0) = 0$, $f(m) = \ub.$ 
\begin{description}
\item [Lower Bound] Define $\alpha^+ = (\alpha_0, \alpha_1, \ldots, \alpha_{m-1}, \alpha_m+1),$ then  for any $\mu \in (0,  \ub),$ it holds 
\begin{eqnarray}
\label{eq:main-ineq-low_all_u}
 \P_{w \sim \Dir(\alpha^+)}(wf \geq \mu) \geq (1 - \varepsilon)\P_{\zeta \sim \cN(0,1)}\left(\zeta \geq \sqrt{\ualpha} \cdot A(\up, \mu, f)\right).
\end{eqnarray}
 \item [Upper Bound] Set $\alpha^- = (\alpha_0 + 1, \alpha_1, \ldots, \alpha_{m-1}, \alpha_m),$ then for any $\mu \in (0,  \ub)$ it holds 
\begin{eqnarray}
\label{eq:main-ineq-upper_all_u}
\P_{w \sim \Dir(\alpha^-)}(wf \geq \mu) \leq (1 + \varepsilon)\P_{\zeta \sim \cN(0,1)}\left(\zeta \geq \sqrt{\ualpha} \cdot A(\up, \mu, f)\right).
 \end{eqnarray}
  \end{description}  
\end{corollary}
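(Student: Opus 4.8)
\emph{Proof plan.} The plan is to deduce the corollary from Theorem~\ref{thm:bound_dbc} by splitting the range of $\mu$ into the three regimes $\mu\in(\up f,\ub)$, $\mu=\up f$, and $\mu\in(0,\up f)$. In the first regime there is nothing to do: since $\mu\ge\up f$ we have $\Kinfstar(\up,\mu,f)=\Kinf(\up,\mu,f)$ and $\mathrm{sgn}(\mu-\up f)=+1$, so $\sqrt{\ualpha}\,A(\up,\mu,f)=\sqrt{2\ualpha\,\Kinf(\up,\mu,f)}$ and \eqref{eq:main-ineq-low_all_u}--\eqref{eq:main-ineq-upper_all_u} coincide with \eqref{eq:main-ineq-low}--\eqref{eq:main-ineq-upper}. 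The boundary value $\mu=\up f$ will follow at the end by a limiting argument, so the real content is the regime $\mu\in(0,\up f)$, which I would reach through a reflection symmetry of the Dirichlet distribution.

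For the reflection, let $\alpha^{R}=(\alpha_m,\alpha_{m-1},\ldots,\alpha_0)$ and define $h\colon\{0,\ldots,m\}\to[0,\ub]$ by $h(\ell)=\ub-f(m-\ell)$; then $h(0)=0$, $h(m)=\ub$, the normalization of $\alpha^{R}$ is $\up^{R}(\ell)=\up(m-\ell)$, the total mass $\ualpha$ and the quantity $\alphamin$ are unchanged, and $\up^{R}h=\ub-\up f$, so the admissible level range $(\up^{R}h,\ub)$ for the pair $(\alpha^{R},h)$ is exactly $\{\ub-\mu:\mu\in(0,\up f)\}$. If $w\sim\Dir(\beta)$ then the reversed vector $w^{R}=(w_m,\ldots,w_0)\sim\Dir(\beta^{R})$, and since $w$ is a probability vector one has the pointwise identity $wf+w^{R}h=\ub$; hence $\{wf\ge\mu\}=\{w^{R}h\le\ub-\mu\}$ and $\P(wf\ge\mu)=1-\P(w^{R}h>\ub-\mu)$. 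The increment of the last coordinate turns into the increment of the first coordinate under reversal, i.e.\ $(\alpha^{+})^{R}=(\alpha^{R})^{-}$ and $(\alpha^{-})^{R}=(\alpha^{R})^{+}$. Finally, a change of summation index gives, for $\mu<\up f$, $\Kinf(\up^{R},\ub-\mu,h)=\inf\{\KL(\up,q):q\in\simplex_m,\ qf\le\mu\}=\Kinf(\up,\ub-\mu,\ub-f)=\Kinfstar(\up,\mu,f)$, the last two equalities being the definition of $\Kinf$ and the stated identity for $\Kinfstar$.

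Now set $t=\sqrt{2\ualpha\,\Kinfstar(\up,\mu,f)}\ge0$ and $\Phi^{c}=\P_{\zeta\sim\cN(0,1)}(\zeta\ge t)\le\tfrac12$; since $\mu<\up f$, $\sqrt{\ualpha}\,A(\up,\mu,f)=-t$ and, using $\P_{\zeta}(\zeta\ge-t)=1-\Phi^{c}$, the target right-hand sides in \eqref{eq:main-ineq-low_all_u}--\eqref{eq:main-ineq-upper_all_u} are $(1-\varepsilon)(1-\Phi^{c})$ and $(1+\varepsilon)(1-\Phi^{c})$. For the lower bound I take $w\sim\Dir(\alpha^{+})$, so $w^{R}\sim\Dir((\alpha^{R})^{-})$, apply the upper bound \eqref{eq:main-ineq-upper} of Theorem~\ref{thm:bound_dbc} to $(\alpha^{R},h)$ at level $\ub-\mu$, and use $\P(w^{R}h>\ub-\mu)\le\P(w^{R}h\ge\ub-\mu)\le(1+\varepsilon)\Phi^{c}$ to obtain $\P(wf\ge\mu)\ge1-(1+\varepsilon)\Phi^{c}$; the desired inequality $1-(1+\varepsilon)\Phi^{c}\ge(1-\varepsilon)(1-\Phi^{c})$ is equivalent to $2\Phi^{c}\le1$, which holds. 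For the upper bound I take $w\sim\Dir(\alpha^{-})$, so $w^{R}\sim\Dir((\alpha^{R})^{+})$, and apply the lower bound \eqref{eq:main-ineq-low} to $(\alpha^{R},h)$; here the one subtlety is that I need a lower bound on $\P(w^{R}h>\ub-\mu)$ rather than on the possibly larger $\P(w^{R}h\ge\ub-\mu)$, which I get either from the absolute continuity of the weighted Dirichlet sum $w^{R}h$ on $(0,\ub)$ (from the density representation derived in this paper) or, more elementarily, from $\P(w^{R}h>\ub-\mu)=\lim_{\delta\downarrow0}\P(w^{R}h\ge\ub-\mu+\delta)\ge(1-\varepsilon)\lim_{\delta\downarrow0}\P_{\zeta}(\zeta\ge\sqrt{2\ualpha\,\Kinf(\up^{R},\ub-\mu+\delta,h)})=(1-\varepsilon)\Phi^{c}$ by continuity of $\Kinf(\up^{R},\cdot,h)$. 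This gives $\P(wf\ge\mu)\le1-(1-\varepsilon)\Phi^{c}\le(1+\varepsilon)(1-\Phi^{c})$, again equivalent to $2\Phi^{c}\le1$.

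It remains to treat $\mu=\up f$, where $A(\up,\up f,f)=0$ and both target right-hand sides equal $(1\mp\varepsilon)/2$. Since $\mu\mapsto A(\up,\mu,f)$ is continuous (it is the bijection of $[0,\ub]$ onto $\R$ noted before the corollary) and the events $\{wf\ge\mu\}$ decrease to $\{wf\ge\up f\}$ as $\mu\uparrow\up f$, letting $\mu\uparrow\up f$ in the inequalities already established for $\mu\in(0,\up f)$ yields $\P_{w\sim\Dir(\alpha^{+})}(wf\ge\up f)\ge(1-\varepsilon)/2$ and $\P_{w\sim\Dir(\alpha^{-})}(wf\ge\up f)\le(1+\varepsilon)/2$. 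I expect the only genuine obstacle to be bookkeeping: keeping the one-sided increments $\alpha^{\pm}$ on the correct coordinate under the reversal, and handling the strict-versus-nonstrict inequality at the level $\ub-\mu$ in the upper-bound direction; the $\Kinf$/$\Kinfstar$ identity and the scalar inequality $2\Phi^{c}\le1$ are both immediate.
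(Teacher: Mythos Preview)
Your proof is correct and follows essentially the same reflection argument as the paper: reduce $\mu<\up f$ to Theorem~\ref{thm:bound_dbc} by reversing the coordinates and passing to the function $\ell\mapsto \ub-f(m-\ell)$, then use the scalar inequality $1-(1\pm\varepsilon)\Phi^{c}\gtrless(1\mp\varepsilon)(1-\Phi^{c})$, which boils down to $2\Phi^{c}\le1$. You are in fact slightly more careful than the paper in two places---handling the boundary value $\mu=\up f$ by a limit (the paper's proof silently includes it in the ``$\mu\ge\up f$'' case even though Theorem~\ref{thm:bound_dbc} is stated only for $\mu>\up f$) and distinguishing strict from non-strict events---but the route is the same.
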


\begin{corollary}\label{cor:two_sided_ineeq}
    For any $\alpha = (\alpha_0,\ldots,\alpha_m) \in \R^{m+1}_{++}$ define $\ualpha = \sum_{i=0}^m \alpha_i$ and two distributions  $\up^+ \in \simplex_m,$ $\up^{-} \in \simplex_m$ with 
    \begin{eqnarray*}
     \up^+(\ell) &= & \alpha_\ell / ( \ualpha-1),\quad \ell < m, \quad \up^{+}(m) = (\alpha_m - 1)/ (\ualpha-1),
    \\
    \up^-(\ell) &= & \alpha_\ell / (\ualpha - 1), \quad \ell > 0,\quad \up^-(0) = (\alpha_0 - 1)/(\ualpha - 1).   
    \end{eqnarray*}
    Assume that $\alphamin = \min(\alpha_0, \alpha_m) \geq c_0 \cdot \varepsilon^{-2} + 1$ for an absolute constant $c_0 > 0$. Then the following two-sided bound holds for any $\mu \in (0, \ub)$
    \[
        (1 - \varepsilon)\P\Bigl(\zeta \geq \sqrt{\ualpha-1} \cdot  A(\up^+, \mu, f)\Bigr) \leq \P(wf \geq \mu)   \leq (1 + \varepsilon)\P\Bigl(\zeta \geq \sqrt{\ualpha-1} \cdot A(\up^-, \mu, f)\Bigr),
    \]
    where $w \sim \Dir(\alpha)$ and $\zeta \sim \cN(0,1)$.
\end{corollary}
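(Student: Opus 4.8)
The plan is to obtain both inequalities directly from Corollary~\ref{cor:bound_dbc_all_u} by applying it to two auxiliary parameter vectors that differ from $\alpha$ by decreasing a single coordinate by one; the coordinate-wise $+1$ shift built into that corollary then turns each auxiliary vector back into $\alpha$. For the lower bound, first I would take $\tilde\alpha = (\alpha_0, \alpha_1, \ldots, \alpha_{m-1}, \alpha_m - 1)$, so that incrementing its last coordinate recovers $\alpha$. Its total mass is $\sum_{j} \tilde\alpha_j = \ualpha - 1$, and its normalization on $\simplex_m$ is precisely the measure $\up^+$ of the statement, since $\up^+(\ell) = \alpha_\ell/(\ualpha - 1)$ for $\ell < m$ and $\up^+(m) = (\alpha_m - 1)/(\ualpha - 1)$. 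For the upper bound I would take $\hat\alpha = (\alpha_0 - 1, \alpha_1, \ldots, \alpha_m)$, so that incrementing its first coordinate recovers $\alpha$; its total mass is again $\ualpha - 1$ and its normalization on $\simplex_m$ is precisely $\up^-$.

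Next I would check the hypotheses of Corollary~\ref{cor:bound_dbc_all_u} for both auxiliary vectors under the strengthened assumption $\alphamin = \min(\alpha_0, \alpha_m) \geq c_0 \varepsilon^{-2} + 1$. For $\tilde\alpha$ the relevant quantity is $\min(\tilde\alpha_0, \tilde\alpha_m) = \min(\alpha_0, \alpha_m - 1) \geq c_0 \varepsilon^{-2}$, and $\tilde\alpha_m = \alpha_m - 1 > 0$, so $\tilde\alpha \in \R_{++}^{m+1}$. For $\hat\alpha$ the relevant quantity is $\min(\hat\alpha_0, \hat\alpha_m) = \min(\alpha_0 - 1, \alpha_m) \geq c_0 \varepsilon^{-2}$, and $\hat\alpha_0 = \alpha_0 - 1 > 0$. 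The mapping $f$ retains the boundary conditions $f(0) = 0$, $f(m) = \ub$, and the range $\mu \in (0, \ub)$ is unchanged, so both applications are admissible.

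Applying the Lower Bound of Corollary~\ref{cor:bound_dbc_all_u} to the base vector $\tilde\alpha$ then gives $\P_{w \sim \Dir(\alpha)}(wf \geq \mu) \geq (1 - \varepsilon)\,\P(\zeta \geq \sqrt{\ualpha - 1}\cdot A(\up^+, \mu, f))$, since the probability there is over $\Dir$ of the incremented vector, which is $\Dir(\alpha)$, and the Gaussian scaling is $\sqrt{\ualpha - 1}$. Similarly, applying the Upper Bound to $\hat\alpha$ gives $\P_{w \sim \Dir(\alpha)}(wf \geq \mu) \leq (1 + \varepsilon)\,\P(\zeta \geq \sqrt{\ualpha - 1}\cdot A(\up^-, \mu, f))$. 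Combining the two displays is exactly the claimed two-sided bound.

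There is essentially no analytic obstacle here, since the statement is a repackaging of Corollary~\ref{cor:bound_dbc_all_u} in which the coordinate increment is absorbed into the definitions of $\up^+$ and $\up^-$. The only thing to be careful about is the bookkeeping: which coordinate is decremented in each of the two applications, the corresponding passage from $\ualpha$ to $\ualpha - 1$ in both the normalization of the base measure and the Gaussian scaling factor, and the fact that the threshold on $\alphamin$ must be raised from $c_0 \varepsilon^{-2}$ to $c_0 \varepsilon^{-2} + 1$ precisely so the decremented parameters stay above $c_0 \varepsilon^{-2}$ and remain strictly positive.
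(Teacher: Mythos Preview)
Your proposal is correct and is exactly the intended derivation: the paper does not spell out a separate proof for this corollary, but it is meant to follow by applying Corollary~\ref{cor:bound_dbc_all_u} to the shifted base vectors $\tilde\alpha = (\alpha_0,\ldots,\alpha_{m-1},\alpha_m-1)$ and $\hat\alpha = (\alpha_0-1,\alpha_1,\ldots,\alpha_m)$, so that the built-in $+1$ increment restores $\alpha$ in both cases. Your bookkeeping on the normalizations, the scaling factor $\sqrt{\ualpha-1}$, and the need to strengthen the threshold on $\alphamin$ by $+1$ is all accurate.
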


\paragraph*{Discussion}
Let us discuss conditions of the theorem. 
\begin{itemize}
    \item The condition on $\alphamin$  is needed to bound  remainder terms in the asymptotic expansion for  the integral representation for the density of \(wf\), see Section~\ref{sec:saddle-point}. In the Bayesian setting, this condition could be automatically satisfied by taking large enough uniform prior.
    \item The condition on the weight function $f$ can be achieved by an appropriate shifting of $u$ and $f$ by the same constant;
    \item The usage of $\alpha^+$ and $\alpha^-$ is equivalent to the  shifts in parameters that were used by \citet{alfers1984normal}. In Corollary~\ref{cor:two_sided_ineeq} we have that the upper and lower bound are close to each other as $\ualpha \to +\infty$ since $\norm{\up^+ - \up^-}_1 = 2/\ualpha$ and Theorem~7 by \cite{honda2010asymptotically} holds.
\end{itemize}
Let us compare the bounds of Theorem~\ref{thm:bound_dbc} to the existing bounds in the literature. 
First, note that the components of the Dirichlet vector $\xi\sim \Dir(\alpha)$ are strongly dependent. In particular, 
\[  
\cov[\xi_i,\xi_j]=-\frac{\alpha_i\alpha_j}{\ualpha^2(1+\ualpha)}<0,\quad i\neq j.
\]
This implies for the variance of \(\xi f,\) 
\[
\Var(\xi f)=\sum_{i\neq j}\frac{f_{i}f_{j}\alpha_{i}\alpha_{j}}{\overline{\alpha}^{2}(1+\overline{\alpha})}+\sum_{i}\frac{(\overline{\alpha}-\alpha_{i})\alpha_{i}f_{i}^{2}}{\overline{\alpha}^{2}(1+\overline{\alpha})}.
\]
If the components of $\xi$ were independent we could apply Gaussian approximation results (see e.g. \cite{fang2021high} and references therein) to 
$(\xi f-\up f)/\sqrt{\Var(\xi f)}$ and get bounds for 
\begin{eqnarray*}
\P_{ \xi \sim\Dir(\alpha)}(\xi f \geq \mu) =\P_{ \xi\sim \Dir(\alpha)}\Bigl((\xi f-\up f)/\sqrt{\Var(\xi f)}\geq 
(\mu-\up f)/\sqrt{\Var(\xi f)}\Bigr).
\end{eqnarray*}
 However, even in the independent case, obtaining two-sided bounds of multiplicative form involving  \(\Kinf\) as a deviation measure remains unclear. Another possibility to get bounds of the form \eqref{eq:main-ineq-low} and \eqref{eq:main-ineq-upper} is to use  large deviations techniques (Sanov-type bounds). Apart from the fact that there are only a few results in the literature for dependent case, this technique leads to asymptotic results and hence can not be directly used to obtain the nonasymptotic bounds we need. For example, the so-called inversed Sanov-type large deviation principle was established by \cite{ganesh1999inverse}. The authors in \cite{ganesh1999inverse} consider a problem of Bayesian inference for a finite-support distribution $p \in \simplex_m$ over a space of size $m+1$. Taking a prior distribution $\rho^0$ to be a Dirichlet distribution $\Dir(\alpha^0)$, the posterior distribution $\rho^n$ is also Dirichlet distribution $\Dir(\alpha^n)$ with  $\sum_{j=0}^m \alpha^n_j \to \infty$. Therefore, Theorem~1 in \cite{ganesh1999inverse} implies that for sets $B$ of the form $B = \{ w : wf \geq \mu \}$ with $\mu \geq pf,$ the following asymptotic bound holds
\[
    \lim_{n \to \infty} \frac{1}{n} \log \P_{w \sim \rho^n}[ wf  \geq \mu ] = \inf_{q: qf \geq \mu } \KL(p, q) = \Kinf(p, \mu, f).
\]
Therefore, we see that our Theorem~\ref{thm:bound_dbc} can be viewed as a non-asymptotic version of this large-deviation principle for a Bayesian inference under mild conditions on the prior distribution. Finally, let us mention a connection to the literature on deviations bounds for a weighted bootstrap procedure, see e.g. \cite{broniatowski2017weighted} and references therein. These works provide Sanov-type large deviation results for the bootstrapped empirical measure of the points \(x_1,\ldots,x_n,\)
 \begin{eqnarray*}
P_n^W=\sum_{i=1}^n W_i^n\delta_{x_i}, \quad W^n_i=\frac{Y_i}{\sum_{i=1}^n Y_i}
\end{eqnarray*}
where \(Y_1,\ldots,Y_n\) denotes a sequence of nonnegative independent real-valued random variables with expectation \(1\). If we chose Gamma distribution for \(Y,\) then the weights become Dirichlet distributed.  
The main problem with this approach is that only asymptotic results can be obtained in this way with remainder terms that are difficult to quantify.

\section{Application: Deviations for the Dirichlet process posterior}

Consider a space $\Xset$ and its $\sigma$-algebra $\mathcal{X}$, and define $\nu$ as a finite non-null measure on $(\Xset,\mathcal{X})$. The stochastic process $G$, indexed by elements $B$ of $\mathcal{X}$, is a Dirichlet Process with parameter $\nu$ ($G\sim\mathrm{DP}(\nu)$) if
\begin{equation*}
	G\left(B_1\right),\ldots,G\left(B_d\right)\sim\mathrm{Dir}\left(\nu\left(B_1\right), \ldots, \nu\left(B_d\right)\right),
\end{equation*}
for any measurable partition $\left(B_1, \ldots, B_d\right)$ of $\Xset$, see \cite{ferguson1973bayesian}. 
\par
Let \(\widehat{P}_n = n^{-1}\sum_{i=1}^n \delta_{Z_i}\) be the empirical measure of an i.i.d. sample \(Z_1,\ldots, Z_n\) from a distribution \(P\) on  \((\Xset,\mathcal{X})\), and given \(Z_1,\ldots,Z_n\) let \(\widetilde{P}_n\) be drawn from the Dirichlet process with a base measure \(\nu + n \widehat{P}_n\). Here \(\nu\) is a finite (not necessarily probability) measure on the sample space and \(\widetilde{P}_n|Z_1,\ldots,Z_n \sim \mathrm{DP}(\nu + n \widehat{P}_n)\) for all \(n\), which is the posterior distribution obtained when equipping the distribution of the observations \(Z_1, Z_2,\ldots, Z_n\) with a Dirichlet process prior with the base measure \(\nu\). For full definitions and properties, see the review in Chapter 4 of \cite{ghosal2017fundamentals}. 
\par
We will be interested in deviations bounds for the process \(\widetilde{P}_ng\) for some bounded function \(g\) on $\Xset$. Notably, the following representation holds 
\begin{equation}\label{eq:dirichlet_process}
    \widetilde{P}_n g=V_nQg+(1-V_n)\frac{\sum_{i=1}^n W_ig(Z_i)}{\sum_{i=1}^n W_i}
\end{equation}
with \(V_n\sim \mathrm{Beta}(|\nu|,n),\) \(Q\sim \mathrm{DP}(\nu),\) and \(W_1,W_2,\ldots\) are iid exponential variables with mean \(1\). All variables \(V_n,Q,W_1,W_2,\ldots\) are independent. For a proof, see, e.g. Theorem 14.37 of \cite{ghosal2017fundamentals} with $\sigma = 0$. Some deviation bounds of asymptotic form (large deviation principle)  can be found in \citet{ayalvadi2000large}. 
\par
Let $g$ be a bounded function with values in $[0,1]$ and assume that we know $x_0, x_1$ such that $g(x_0) = 0$ and $g(x_1) = 1$. In particular, it could be guaranteed by adding these two points to the set $\Xset$. Then  introduce the base measure $\nu_\gamma = \gamma \delta_{x_0} + \gamma \delta_{x_1}$ where $\gamma>0$ is a fixed number. Then $Q \sim \mathrm{DP}(\nu_\gamma)$ corresponds to the  Dirichlet distribution supported on $x_0$ and $x_1$:
\[
    Q = w_0 \cdot \delta_{x_0} + w_1 \cdot \delta_{x_1}, \qquad (w_0, w_1) \sim \Dir(\gamma, \gamma),
\]
and, in particular, the posterior mean of $g$  is given by
\[
    \widetilde{P}_n g = \sum_{i=0}^{n+1} w_i \cdot g(Z_i), \qquad w \sim \Dir(\gamma, 1, \ldots, 1, \gamma),
\]
where $Z_0= x_0$ and $Z_{n+1} = x_1$ by convention. Notably, we see that the Dirichlet process with a finite-support base measure as a prior leads to the Bayesian bootstrap \citep{rubin1981bayesian}. By Corollary~\ref{cor:two_sided_ineeq}, we have that for any $\varepsilon > 0,$ under the choice $\gamma = c_0 \cdot \varepsilon^{-2} + 1$  the following deviation inequality holds for any $t \in (0, 1-Pg)$ conditionally on $Z = (Z_1,\ldots,Z_n),$
\[
    \P(\widetilde{P}_n g - \widehat{P}_n g \geq t | Z)  \leq (1 + \varepsilon)\P\left(\zeta \geq \sqrt{2 (n + 2\gamma - 1) \Kinf\left( \overline{\nu}_{n,g}, \widehat{P}_n g + t\right)}\right),
\]
where $\overline{\nu}_{n,g}$ is a posterior measure over $[0,1]$ defined as 
\[
    \overline{\nu}_{n,g} = \frac{\gamma-1}{n+2\gamma-1} \delta_0 + \frac{1}{n+2\gamma-1}\sum_{i=1}^{n} \delta_{g(Z_i)} + \frac{\gamma}{n+2\gamma-1} \delta_1,
\]
and we treat $\Kinf$ as the minimal Kullback-Leibler divergence over all measures supported on $[0,1]$. By Lemma~11 of \cite{garivier2018kl},  we have
\begin{align*}
    \Kinf\left( \overline{\nu}_{n,g}, \widehat{P}_n g + t\right) \geq  2\left( \frac{2\gamma - 1}{n+2\gamma-1}\left[\widehat{P}_n g - \frac{\gamma}{2\gamma - 1}\right] + t \right)_+^2\,
\end{align*}
and this implies a Hoeffding-type inequality for the posterior Dirichlet process.
\begin{proposition}[Hoeffding-type inequality for Dirichlet process]
    Let $Z_1,\ldots, Z_n$ be sample from a distribution $P$ on some measurable space $(\Xset, \cX)$ and let $g \colon \Xset \to [0,1]$ be a function satisfying  $g(x_0) = 0, g(x_1) = 1$ for some $x_0, x_1 \in \Xset$.
    Furthermore, let $\widetilde{P}_n$ be drawn from the posterior distribution $\mathrm{DP}(\nu_\gamma + n \widehat{P}_n)$ with a base measure $\nu_\gamma = \gamma \delta_{x_0} + \gamma \delta_{x_1}$. Then for any $\delta \in (0,1),$  $\gamma \geq c_0 \varepsilon^{-2} + 1$ and a fixed $\varepsilon \in (0,1),$ we have
    \[
    \P\left(\widetilde{P}_n g -  \widehat{P}_n g \geq \sqrt{\frac{\log((1+\varepsilon)/\delta)}{2(n+2\gamma-1)}} + \frac{\gamma}{n+2\gamma - 1}\  \biggl| \ Z\right)  \leq \delta.
\]
\end{proposition}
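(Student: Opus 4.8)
The plan is to chain the two displayed estimates placed immediately before the proposition — the conditional upper bound coming from Corollary~\ref{cor:two_sided_ineeq} and the Hoeffding-type lower bound on $\Kinf$ from Lemma~11 of \cite{garivier2018kl} — with the elementary Gaussian tail bound, and then to solve for the threshold $t$. We work conditionally on $Z=(Z_1,\ldots,Z_n)$. Here $\widetilde{P}_n g=\sum_{i=0}^{n+1}w_i\, g(Z_i)$ with $w\sim\Dir(\gamma,1,\ldots,1,\gamma)$, an $(n{+}2)$-component Dirichlet with $\ualpha=n+2\gamma$ and $\alphamin=\gamma\ge c_0\varepsilon^{-2}+1$, and the weight function $i\mapsto g(Z_i)$ satisfies the boundary conditions since $g(Z_0)=g(x_0)=0$ and $g(Z_{n+1})=g(x_1)=1$; hence the hypotheses of Corollary~\ref{cor:two_sided_ineeq} hold, and the displayed inequality before the proposition reads, for $t$ with $\widehat{P}_n g+t<1$ lying in the range where $\widehat{P}_n g+t\ge\E_{\overline{\nu}_{n,g}}[X]$ (so that $A(\up^-,\cdot,f)$ equals the nonnegative root $\sqrt{2\Kinf}$),
\[
\P\bigl(\widetilde{P}_n g-\widehat{P}_n g\ge t\bigm| Z\bigr)\;\le\;(1+\varepsilon)\,\P\!\left(\zeta\ge\sqrt{2(n+2\gamma-1)\,\Kinf(\overline{\nu}_{n,g},\widehat{P}_n g+t)}\right).
\]

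Next I would apply $\P(\zeta\ge x)\le e^{-x^2/2}$ for $x\ge0$ and substitute the lower bound $\Kinf(\overline{\nu}_{n,g},\widehat{P}_n g+t)\ge 2\bigl(\tfrac{2\gamma-1}{n+2\gamma-1}[\widehat{P}_n g-\tfrac{\gamma}{2\gamma-1}]+t\bigr)_+^2$ recalled above, obtaining
\[
\P\bigl(\widetilde{P}_n g-\widehat{P}_n g\ge t\bigm| Z\bigr)\;\le\;(1+\varepsilon)\exp\!\left(-2(n+2\gamma-1)\left(\tfrac{2\gamma-1}{n+2\gamma-1}\Bigl[\widehat{P}_n g-\tfrac{\gamma}{2\gamma-1}\Bigr]+t\right)_{\!+}^{2}\right).
\]
To push the right-hand side below $\delta$ it suffices that the bracket be at least $\sqrt{\log((1+\varepsilon)/\delta)/(2(n+2\gamma-1))}$; solving for $t$ and discarding the term $-\tfrac{2\gamma-1}{n+2\gamma-1}\widehat{P}_n g\le0$ (as $\widehat{P}_n g\ge0$) shows that the choice $t^\star=\sqrt{\log((1+\varepsilon)/\delta)/(2(n+2\gamma-1))}+\gamma/(n+2\gamma-1)$ works. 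Indeed, for $t=t^\star$ the bracket equals $\tfrac{2\gamma-1}{n+2\gamma-1}\widehat{P}_n g+\sqrt{\log((1+\varepsilon)/\delta)/(2(n+2\gamma-1))}$, which is nonnegative (so $(\cdot)_+$ may be dropped), is at least $\sqrt{\log((1+\varepsilon)/\delta)/(2(n+2\gamma-1))}$, and also forces $\widehat{P}_n g+t^\star\ge\E_{\overline{\nu}_{n,g}}[X]$, so the preceding step was legitimate; plugging $t=t^\star$ into the last display gives the bound $(1+\varepsilon)\cdot\delta/(1+\varepsilon)=\delta$, which is the claim.

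The only point needing a separate remark is the constraint $\widehat{P}_n g+t^\star<1$ used above. If instead $\widehat{P}_n g+t^\star\ge1$, then $\{\widetilde{P}_n g-\widehat{P}_n g\ge t^\star\}\subseteq\{\widetilde{P}_n g\ge1\}$, a null event: $\widetilde{P}_n g\le1$ since $g$ is $[0,1]$-valued, and the conditional law of $\widetilde{P}_n g$ is absolutely continuous (the Dirichlet $\Dir(\gamma,1,\ldots,1,\gamma)$ has a density), so it has no atom at $1$; hence the conditional probability is $0\le\delta$. I do not expect a genuine obstacle: given Corollary~\ref{cor:two_sided_ineeq} and the $\Kinf$ lower bound the proposition is essentially arithmetic, and the only things to track are the sign of the bracket (needed to invoke the Gaussian tail bound and to drop $(\cdot)_+$) and this degenerate endpoint.
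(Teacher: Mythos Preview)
Your proof is correct and follows exactly the route the paper intends: it chains the upper bound from Corollary~\ref{cor:two_sided_ineeq} with the $\Kinf$ lower bound from Lemma~11 of \cite{garivier2018kl} via the Gaussian tail estimate $\P(\zeta\ge x)\le e^{-x^2/2}$, then solves for the threshold. The paper itself does not spell out these last steps, so your additional care in verifying that the bracket is nonnegative (equivalently $\widehat P_n g+t^\star\ge\E_{\overline\nu_{n,g}}[X]$, so that the displayed bound with $\Kinf$ rather than $A$ is legitimate) and in disposing of the degenerate endpoint $\widehat P_n g+t^\star\ge 1$ is welcome and fills small gaps the paper leaves implicit.
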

Moreover, using a slightly more involved technique, we can obtain a Bernstein-type inequality.
\begin{proposition}[Bernstein-type inequality for Dirichlet process]    
    Let $\widetilde{P}_n$ be drawn from the posterior distribution $\mathrm{DP}(\nu_\gamma + n \widehat{P}_n)$ for a base measure $\nu_\gamma = \gamma \delta_{x_0} + \gamma \delta_{x_1}$. Then for any $\delta \in (0,1),$  $\gamma \geq c_0 \varepsilon^{-2} + 1$ and a fixed $\varepsilon \in (0,1),$ we have
        \[
        \P\left(\widetilde{P}_n g -  \widehat{P}_n g \geq \sqrt{\frac{4  \Var_{\widehat{P}_n}[g] \cdot \log((1+\varepsilon)/\delta)}{n+2\gamma-1}} + \frac{4 \log((1+\varepsilon)/\delta) + 5\gamma}{n+2\gamma - 1}\  \biggl| \ Z\right)  \leq \delta\,,
    \]
    where $\Var_{\widehat{P}_n}[g] = \widehat{P}_n[g^2] - [\widehat{P}_n g]^2$ is the variance of the empirical measure $\widehat{P}_n$.
\end{proposition}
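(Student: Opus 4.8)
The plan is to mirror the proof of the preceding Hoeffding-type proposition, replacing the Hoeffding lower bound on $\Kinf$ by a Bernstein one. Write $N = n + 2\gamma - 1$, $L = \log((1+\varepsilon)/\delta)$ and $V = \Var_{\widehat{P}_n}[g]$. Starting from the deviation inequality displayed just before the Hoeffding proposition,
\[
\P\bigl(\widetilde{P}_n g - \widehat{P}_n g \geq t \mid Z\bigr) \leq (1+\varepsilon)\,\P\Bigl(\zeta \geq \sqrt{2 N\,\Kinf(\overline{\nu}_{n,g},\widehat{P}_n g + t)}\Bigr),
\]
and combining it with the Gaussian tail bound $\P(\zeta \geq x) \leq e^{-x^2/2}$, the claim reduces to showing that $h := \sqrt{4VL/N} + (4L+5\gamma)/N$ satisfies $N\,\Kinf(\overline{\nu}_{n,g},\widehat{P}_n g + h) \geq L$, since then the right-hand side is at most $(1+\varepsilon)e^{-L} = \delta$. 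When $\widehat{P}_n g + h \geq 1$ there is nothing to prove (either $\Kinf = +\infty$, or $\widetilde{P}_n g < 1$ almost surely so the left-hand side vanishes), so I may assume $\widehat{P}_n g + h \in (0,1)$.

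The key ingredient is a Bernstein-type lower bound on $\Kinf$: for every $\nu \in \cP[0,1]$ with mean $m_\nu$, variance $\sigma_\nu^2$ and every $\mu \in (m_\nu, 1)$,
\[
\Kinf(\nu,\mu) \;\geq\; \frac{(\mu - m_\nu)^2}{4\bigl(\sigma_\nu^2 + (\mu - m_\nu)\bigr)}\,.
\]
I would derive this from the dual form of \eqref{eq:Kinf} for measures on $[0,1]$, namely $\Kinf(\nu,\mu) = \max_{\lambda \in [0,1]} \E_{X \sim \nu}\bigl[\log\bigl(1 - \lambda\tfrac{X-\mu}{1-\mu}\bigr)\bigr]$: insert $\lambda = \min\{\lambda^\star, \tfrac{1}{2}\}$ where $\lambda^\star$ maximizes the quadratic minorant obtained from $\log(1-u) \geq -u-u^2$ (valid for $u \leq \tfrac{1}{2}$), use $\E_\nu[(X-\mu)^2] = \sigma_\nu^2 + (\mu-m_\nu)^2 \leq \sigma_\nu^2 + (\mu-m_\nu)$, and treat the cases $\lambda^\star \leq \tfrac{1}{2}$ and $\lambda^\star > \tfrac{1}{2}$ separately. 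Alternatively this bound may be quoted from the $\Kinf$ toolbox of \citet{garivier2018kl}, in the spirit of Lemma~11 used for the Hoeffding version.

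It remains to specialize to $\nu = \overline{\nu}_{n,g}$. A direct computation gives mean $m = (n\widehat{P}_n g + \gamma)/N$ and second moment $(n\widehat{P}_n[g^2] + \gamma)/N$; using $n/N \leq 1$, $\widehat{P}_n g \in [0,1]$ and $\gamma > 1$ one obtains $\sigma^2_{\overline{\nu}_{n,g}} \leq V + 3\gamma/N$, $m < 1$, and $\widehat{P}_n g + h - m \geq h - \gamma/N > 0$ (so in particular the displayed deviation inequality applies). Plugging these into the Bernstein bound and multiplying by $N$, the target $N\,\Kinf \geq L$ becomes an elementary inequality among $\sqrt{VL/N}$, $L/N$ and $\gamma/N$: squaring the leading term of $h$ produces $4VL$, which dominates the variance contribution $4(V+3\gamma/N)L$ up to $O(\gamma L/N)$ terms, while the $4L/N$ and $5\gamma/N$ summands of $h$ are exactly what is needed to absorb the cross terms and the $O(\gamma/N)$ discrepancies $\sigma^2_{\overline{\nu}_{n,g}} - V$ and $m - \widehat{P}_n g$. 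This final verification is routine but must be done with care, and it is where the constants $4$ and $5$ in the statement are pinned down. The only genuinely delicate step is establishing the Bernstein lower bound on $\Kinf$ with clean absolute constants — in particular handling the feasibility constraint in the dual problem (which keeps the objective finite when $\nu$ charges the endpoint $1$, as $\overline{\nu}_{n,g}$ does) without losing the variance dependence; the rest is bookkeeping parallel to the Hoeffding case.
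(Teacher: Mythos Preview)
Your approach is correct in outline but takes a genuinely different route from the paper. The paper does not derive a Bernstein lower bound on $\Kinf$ and then invert it; instead it works in the other direction. It fixes $\mu_\delta$ by $\Kinf(\overline{\nu}_{n,g},\mu_\delta)=L/N$, takes the optimizing measure $\eta$ in the definition of $\Kinf$, and then upper-bounds the mean shift $\E_\eta[X]-\E_{\overline{\nu}_{n,g}}[X]$ directly using two imported lemmas: the Talebi--Maillard inequality $\E_\eta[X]-\E_\nu[X]\le\sqrt{2\Var_\eta[X]\,\KL(\nu,\eta)}$ (Corollary~11 of \cite{talebi2018variance}), followed by a change-of-variance bound $\Var_\eta[X]\le 2\Var_\nu[X]+4\KL(\nu,\eta)$ (Lemmas~E.3--E.4 of \cite{tiapkin2022dirichlet}) and a further comparison $\Var_{\overline{\nu}_{n,g}}[X]\le \Var_{\widehat P_n}[g]+O(\gamma/N)$. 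This yields an explicit upper bound on $\mu_\delta$ without ever writing down a closed-form Bernstein inequality for $\Kinf$.

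Your approach, by contrast, is self-contained: the dual-form argument with $\log(1-u)\ge -u-u^2$ gives the Bernstein bound $\Kinf(\nu,\mu)\ge(\mu-m_\nu)^2/\bigl(4(\sigma_\nu^2+(\mu-m_\nu))\bigr)$ directly, avoiding the external references. The price is that the final constant-tracking step (your ``routine bookkeeping'') is a genuine quadratic inversion and is where slack accumulates; in particular your claimed bound $\sigma^2_{\overline{\nu}_{n,g}}\le V+3\gamma/N$ looks slightly too tight (I get $5\gamma/N$ from a direct computation), so the constants $4$ and $5$ in the statement may not come out exactly without some extra care. That said, the paper's own proof is loose in the same place (the displayed line at the end of the proof carries $4\gamma$, the statement $5\gamma$), so this is not a substantive objection.
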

\begin{proof}
    By continuity of $\Kinf(\overline{\nu}_{n,g}, \mu)$ in the second argument, we have that for any $\delta \in (0,1),$ there is $\mu_\delta$ such that
    \[
        \Kinf(\overline{\nu}_{n,g}, \mu_\delta) = \frac{\log((1+\varepsilon)/\delta)}{n+2\gamma - 1}.
    \]
    Let $\eta$ be a measure such that $\Kinf(\overline{\nu}_{n,g}, \mu_\delta) = \KL(\overline{\nu}_{n,g}, \eta)$. By Lemma 9 of \cite{honda2010asymptotically}, this measure  has a finite support, that coincides with the one of  measure $\overline{\nu}_{n,g}$  since $\overline{\nu}_{n,g}\{1\} > 0$. Therefore Corollary 11 of \cite{talebi2018variance} is applicable and we get
    \[
          \E_{X \sim \eta}[X]  - \E_{X \sim \overline{\nu}_{n,g}}[X] \leq \sqrt{2 \Var_{X \sim \eta}[X] \KL(\overline{\nu}_{n,g}, \eta)}.
    \]
    By a change of variance argument (see e.g. Lemma~E.3 and Lemma~E.4 by \cite{tiapkin2022dirichlet}),
    \[
        \Var_{X \sim \eta}[X] \leq 2 \Var_{X \sim \overline{\nu}_{n,g}}[X] + 4 \KL(\overline{\nu}_{n,g}, \eta) \leq 2 \Var_{\widehat{P}_n}[g] + \frac{3(2\gamma - 1)}{n+2\gamma - 1} + 4 \KL(\overline{\nu}_{n,g}, \eta)
    \]
    we derive
    \[
        \E_{X \sim \eta}[X] \leq \widehat{P}_n +  \sqrt{ \frac{4 \Var_{X \sim \overline{\nu}_{n,g}}[X] \cdot\log((1+\varepsilon)/\delta)}{n+2\gamma - 1}} + \frac{ (\sqrt{8} + 1) \log((1+\varepsilon)/\delta)  + 4\gamma}{n+2\gamma - 1},
    \]
    and, as a result
    \[
        \P\left(\widetilde{P}_n g -  \widehat{P}_n g \geq \sqrt{\frac{4  \Var_{\widehat{P}_n}[g] \cdot \log((1+\varepsilon)/\delta)}{n+2\gamma-1}} + \frac{4 \log((1+\varepsilon)/\delta) + 4\gamma}{n+2\gamma - 1}\  \biggl| \ Z\right)  \leq \delta.
    \]
\end{proof}

Let us stress that our bound could also be used in a similar manner to directly estimate the difference between the corresponding posterior mean and the real mean.
\section{Application: Refined analysis of Multinomial Thompson Sampling}\label{sec:multinomial_ts}

In this section, we will apply Theorem~\ref{thm:bound_dbc} to analyze an algorithm for the stochastic $K$-armed bandit problem. In this sequential problem, an agent, samples one arm (or distribution) out of the $K$ independent arms at each round and receives the corresponding sample as a reward for an horizon of $T$ rounds. The regret, which measures the difference between the cumulative reward collected by an oracle that knows the mean of each arm in advance and the cumulative reward collected by the agent, is one of the most studied performance criteria for this problem.

In their seminal work, \citet{Lai1985asymptotically} provides an asymptotic problem-dependent lower bound for the parametric bandit, where the arms belong to the same parametric family of distributions. This lower bound states that any good algorithm should suffer a regret of at least $\kappa \log(T)$ as $T$ grows to infinity and where $\kappa$ is some informational complexity measure of the problem. Since then, many algorithms have been shown to be asymptotically optimal, matching the lower bound by \citet{Lai1985asymptotically}. Notably, the famous upper confidence bound (\klUCB) algorithm \citep{agrawal1995sample,auer2002nonstochastic} is asymptotically optimal for a wide range of exponential families distribution \citep{agrawal1995sample,burnetas1996optimal,garivier2011kl}. Additionally, the Thompson sampling (\TS , \citealp{thompson1933on}) algorithm, which exhibits good empirical performance \citep{chapelle2011empirical}, is also proven to be asymptotically optimal \citep{korda2013thompson, agrawal2013further}.

In this paper, we study the more challenging setting of non-parametric stochastic bandit where we only assume that the arms are supported on the unit interval, i.e that the rewards are bounded. This setting was first considered by \citet{honda2010asymptotically}. In particular, they show that the \DMED algorithm matches  the asymptotic problem-dependent lower bound by \citet{Lai1985asymptotically} generalized by \citet{burnetas1996optimal} to the non-parametric setting of bounded rewards (see also \citet{honda2015nonasymptotic}). Later \citet{cappe2013kullback, garivier2018kl}, leveraging the empirical likelihood method, proposed an extension of the \klUCB algorithm, namely \KLUCB, for bandit with bounded rewards that is also asymptotically optimal.
\par
In their recent work,  \citet{riou20a} proposed an extension of \TS for bounded rewards, called non-parametric Thompson sampling (\NPTS). This algorithm computes an average of the observed reward with random weights for all arms at each round, and selects the arm with the highest average. See \citet{baudry2021optimality} for an extension of this algorithm beyond the bounded reward setting. While \NPTS does not rely on the posterior sampling mechanism a priori, \citet{riou20a} claim that \NPTS is still a randomized algorithm, but not a \TS algorithm in the strict sense. 
However, the representation \eqref{eq:dirichlet_process} for Dirichlet process with $\Xset = [0,1]$ and $g(x) = x$ shows that \NPTS is indeed a \TS-type algorithm with a Bayesian model with prior a Dirichlet Process $\mathrm{DP}(\delta_1)$ of base measure a Dirac distribution.
\par
Nevertheless, dealing with the non-parametric setting comes at the cost of considerable time and space complexity. Indeed all the presented algorithms require storing all the observed rewards and for \NPTS sample weights for all the history or for \KLUCB and \DMED solve a convex program at each round, ending in a space complexity of at least $\cO(t)$ and time-complexity $\cO(t)$ at round $t$. This fact contrasts sharply with the parametric setting where the time complexity is of order $\cO(1)$ and space complexity is of order $\cO(K)$ at all rounds.
\par
Interestingly the paper \citet{riou20a} also considers the multinomial reward setting where the support of the arms is the same finite set of size $m$. We can think of this setting as lying at the boundary between non-parametric and parametric bandit problems.
Note that this setting was previously considered by \citet{cappe2013kullback}. For this setting, \citet{riou20a} propose the Multinomial Thompson Sampling (\MTS) algorithm that turns out to be a \TS algorithm with a Dirichlet prior/posterior over the arms.
\par
The current paper derives a refined instance-dependent regret bound for \MTS. Notably, our bound is independent of the size of the finite support of the arms. This property is in striking contrast with the previous regret bound for \MTS, which grows exponentially with the support size. Furthermore, this improvement shows that an extension of \MTS based on a randomized rounding technique is optimal. However, more importantly, this extension enjoys a space complexity of order $\cO(K\log(T))$ and a per-round time-complexity of order $\cO(\log(T))$.

Let us start from the description of the setting. We consider a bandit $\bnu= (\nu_1,\ldots,\nu_K)$ with $K$ arms where the arms belong to some set $\cV$ of probability distributions $\nu_k\in \cV, \forall k\in[K]$. At each round $t\in[T]$, the agent selects one of the arms $A^t \in [K]$ and receives a random reward $Y_t$ drawn independently at random from the distribution $\nu_{A^t}$. A typical performance criterion for the agent is the expected regret. 
\[
    \regret^T = \E\left[ \sum_{t=1}^T (\mustar - \mu_{A^t}) \right] = \sum_{a=1}^K (\mustar-\mu_a)\E[N_a^t], 
\]
where $\mu_a = \E_{Y\sim \nu_a}[Y]$ is the mean reward of arm $a$, $\mustar=\max_{a\in[K]} \mu_a$ is the mean reward of an optimal arm and $N_a^t = \sum_{s=1}^t \ind\{A^s=a\}$
the number of
draws of arm $a$ at the end of round $t$. For the sake of simplicity we assume that there is a unique optimal arm $a^\star: \mustar = \mu_{a^\star}$.
\par
We now describe the problem-dependent lower bound by \citet{Lai1985asymptotically}, see also \citet{burnetas1996optimal} and \citet{garivier2016explore}.
 A key quantity in this lower bound is the so-called minimal Kullback-Leibler divergence introduced in Section~\ref{sec:main_results}; for arm $a$, and  $\mu\in\R$
 \[\Kinf^{\cV}(\nu_a,\mu) = 
 \inf\left\{  \KL(\nu, \eta): \eta \in \cV,\, \E_{X \sim \eta}[X] \geq \mu\right\}\,.\]
Then for any "reasonable" agent\footnote{See \citet{garivier2016explore} for a formal definition.}, for any bandit problem $\bnu,$ it holds
 \[ \liminf_{T\to\infty} \frac{\regret^T}{\log T} \geq \sum_{a: \Delta_a > 0} \frac{\Delta_a}{\Kinf^{\cV}(\nu_a, \mustar)}\]
where $\Delta_a = \mustar - \mu_a$ is the sub-optimality gap of arm $a$. In the sequel, we are interested in  problem-dependent optimal algorithms for which the reverse inequality
holds with a lim sup instead of a lim inf. We focus on two particular settings: the multinomial reward setting where each arm has a distribution with finite support and the bounded reward setting.

\subsection{Multinomial reward}

In this section we assume that the arm are supported on the set $\{0, 1/m, \ldots, 1\}$ for a fixed $m$. In particular, we have $\cV = \Pens(\{0, 1/m, \ldots, 1\})$ and denote the minimal Kullback divergence by $\Kinf^{(m)}(\nu_a,\mu)=\Kinf^\cV(\nu_a,\mu) = \Kinf(p_a,\mu,f)$ where $p_a \in \simplex_m$ is defined as $p_a(i) = \nu_a\{i/m\}$ for $i \in \{0,\ldots,m\}$, and  $f(x) = x/m$. 
\par
We now describe the \MTS algorithm introduced by \citet{riou20a} but we first need to present the associated Bayesian model.
As the rewards are sampled from a multinomial distribution, it is appropriate to use a Dirichlet distribution as the prior, as it is a conjugate prior to the multinomial distribution. Thus, let $\rho^0_a=\Dir(\alpha^0_a)$ be a Dirichlet prior distribution with parameter $\alpha^0_a \in \R_+^{m+1}$ over the simplex $\simplex_m$ for arm $a$. The posterior after $t$ rounds is then a Dirichlet distribution, denoted by $\rho^t_a=\Dir(\alpha^t_a)$, with parameter $\alpha^t_a(i) = \alpha^0_a(i) + \sum_{s=1}^t \ind\{A_s=a,Y_s=i/m\}$ for all categories $i = 0,\ldots,m$. At round $t$, \MTS generates posterior samples $w^t_a \sim \rho^{t-1}_a$ for all arms $a \in [K]$ and selects the arm $A^{t} = \argmax_{a \in [K]} w^t_a f$ with the highest posterior sample mean, see Algorithm~\ref{alg:MTS} in Appendix~\ref{app:mts_algorithms} for a full description.

\citet{riou20a} showed that this algorithm has optimal problem-dependent regret for large enough $T$. However, their analysis is, in fact, very asymptotic since the second-order terms of the regret bound depends exponentially on the size of the support $m$. This implies that $T$ should be of order $\rme^{\rme^m}$ to get the correct (non-asymptotic) dependence on $T$. 
Using  Theorem~\ref{thm:bound_dbc}  with a careful choice of the prior distribution, we prove the following theorem in Appendix~\ref{app:proof_bound_mts} that features a much sharper dependence on $m$.

\begin{theorem}\label{thm:multinomial_ts_regret}
    For \MTS with Dirichlet prior parameter given by $\alpha^0_0 = \alpha^0_m = 4c_0 + 1$, where $c_0$ is a constant defined in Theorem~\ref{thm:bound_dbc} and $\alpha^0_i = 1/(m-2)$ for all $i \in \{1,\ldots, m-1\}$, for any sub-optimal arm $a$
    \[
        \E[N^T_a] \leq  \frac{\log T}{\Kinf^{(m)}(\nu_a,\mustar)}  + \cO\left( \log^{9/10}(T) \right)
    \]
    and consequently
    \[
        \regret^T \leq \sum_{a: \Delta_a > 0} \frac{\Delta_a \log(T)}{\Kinf^{(m)}(\nu_a,\mustar)} + \cO\left( \log^{9/10}(T) \right).
    \]
    In particular, the regret bound does not depend on the size of support $m$.
\end{theorem}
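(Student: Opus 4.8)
\emph{Proof sketch.} The plan is to run the standard Thompson-sampling regret decomposition of \citet{agrawal2013further}, recast in the $\Kinf$-geometry of the bounded-reward setting as in \citet{honda2010asymptotically} and \citet{riou20a}, but with one crucial substitution: everywhere \citet{riou20a} invoke a crude product-type estimate for the Dirichlet density — the source of their doubly-exponential-in-$m$ second-order term — we instead invoke the Gaussian-type crossing bound of Theorem~\ref{thm:bound_dbc}, in the two-sided form of Corollary~\ref{cor:two_sided_ineeq}. This is why the prior is chosen with $\alpha^0_0=\alpha^0_m=4c_0+1$ and $\alpha^0_i=1/(m-2)$ on the interior: the endpoint masses then satisfy $\min(\alpha^t_0,\alpha^t_m)\ge 4c_0+1=c_0(1/2)^{-2}+1$ for every $t$ along the random trajectory, so Corollary~\ref{cor:two_sided_ineeq} applies with the \emph{fixed} accuracy $\varepsilon=\tfrac12$ throughout, while the total prior mass $\sum_i\alpha^0_i$ is $O(1)$ uniformly in $m$, so the prior does not contaminate the posterior concentration rate with any $m$-dependence. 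Fix a suboptimal arm $a$ with $\kappa_a\coloneqq\Kinf^{(m)}(\nu_a,\mustar)=\Kinf(p_a,\mustar,f)$, a tolerance $\epsilon=\epsilon(T)\to0$, and the threshold $y\coloneqq\mustar-\epsilon$. Using $\ind\{A^t=a\}\le\ind\{A^t=a,\,w^t_{a^\star}f\ge y\}+\ind\{w^t_{a^\star}f<y\}$ together with the fact that, on the first event, selecting $a$ forces $w^t_af\ge w^t_{a^\star}f\ge y$, one gets
\[
\E[N^T_a]\ \le\ \E\Bigl[\textstyle\sum_{t\le T}\ind\{A^t=a,\,w^t_af\ge y\}\Bigr]\ +\ \textstyle\sum_{t\le T}\P(w^t_{a^\star}f<y)\ =:\ \mathrm{(A)}+\mathrm{(B)} ,
\]
and since arm $a$'s posterior $\Dir(\alpha^t_a)$ is frozen between consecutive pulls of $a$, at most one pull of $a$ occurs for each value of $N^t_a$, so $\mathrm{(A)}\le\sum_{j\ge0}\P(\text{arm }a\text{ pulled with }N^t_a=j,\ w^t_af\ge y)$.

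\emph{Main term $\mathrm{(A)}$.} Split each summand according to whether the empirical law $\widehat p^{\,(j)}_a$ of arm $a$'s first $j$ observations is \emph{good}, meaning $\Kinf(\widehat p^{\,(j)}_a,y,f)\ge\kappa_a-C\epsilon$ for a problem constant $C$. On the good event, since $\Dir(\alpha^t_a)$ has the form $\Dir(\alpha^-)$ after peeling a unit mass off the $0$-th coordinate and the induced $\up^-$ differs from $\widehat p^{\,(j)}_a$ by $O(1/j)$ in total variation, Corollary~\ref{cor:two_sided_ineeq} with $\P(\zeta\ge s)\le\rme^{-s^2/2}$ gives
\[
\P\bigl(w^t_af\ge y\mid\mathcal F_t\bigr)\ \le\ \tfrac32\,\rme^{-(\ualpha^t_a-1)\Kinf(\up^-,y,f)}\ \le\ C'\,\rme^{-j(\kappa_a-C''\epsilon)} ,
\]
so once $j\ge L_T\coloneqq(\log T)/(\kappa_a-C''\epsilon)+O(1)$ this conditional probability is $\le1/T$; thus the good–large-$j$ rounds contribute $O(1)$ to $\mathrm{(A)}$, the rounds with $j<L_T$ contribute at most $L_T$, and the bad rounds contribute $\sum_j\P(\widehat p^{\,(j)}_a\text{ bad})$. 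For the last sum I would use the Honda–Takemura variational identity $\Kinf(p,y,f)=\max_{\lambda\in[0,1/(\ub-y)]}\E_p[\log(1-\lambda(f-y))]$: evaluating it at the instance-dependent, $T$-free maximiser $\lambda^\star$ of $\Kinf(p_a,y,f)$ gives $\Kinf(\widehat p^{\,(j)}_a,y,f)\ge\Kinf(p_a,y,f)+\bigl(\E_{\widehat p^{\,(j)}_a}-\E_{p_a}\bigr)[g_{\lambda^\star}]$ with $g_{\lambda^\star}=\log(1-\lambda^\star(f-y))$ a single bounded scalar function of the i.i.d. observations, so Hoeffding/Chernoff yields $\P(\widehat p^{\,(j)}_a\text{ bad})\le\rme^{-j\,c(\epsilon)}$ with $c(\epsilon)\gtrsim\epsilon^2$ and constants not carrying an explicit dependence on the support size $m$. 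Hence $\mathrm{(A)}\le L_T+O(1/\epsilon^2)+O(1)$, and Lipschitzness of $\mu\mapsto\Kinf(p_a,\mu,f)$ gives $L_T=(\log T)/\kappa_a+O(\epsilon\log T)+O(1)$.

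\emph{Optimal-arm term $\mathrm{(B)}$, and assembling.} Because $f(0)=0$ and $f(m)=\ub$, the reflected weight $\ub-f$ again satisfies the hypotheses of Theorem~\ref{thm:bound_dbc} with the roles of $0$ and $m$ exchanged, and $\P(w^t_{a^\star}f<y)=\P\bigl(w^t_{a^\star}(\ub-f)>\ub-y\bigr)$, so the upper bound applied to $\ub-f$ gives $\P(w^{(j)}_{a^\star}f<y)\le\tfrac32\rme^{-(\ualpha^{(j)}_{a^\star}-1)\Kinfstar(\up^+_{a^\star},y,f)}$, which decays geometrically in $j$ once the optimal arm's empirical mean exceeds $y$ (true with high probability, as it converges to $\mustar>y$). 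Feeding this into the Agrawal–Goyal block-length bound $\mathrm{(B)}\le\sum_{j\ge0}\E[1/p^{a^\star}_j-1]$ with $p^{a^\star}_j=\P(w^{(j)}_{a^\star}f\ge y\mid\mathcal F)$, together with a Hoeffding bound for the rare rounds on which the optimal arm's empirical mean is below $y$, shows $\mathrm{(B)}$ is bounded by a constant independent of $T$ and of $m$. Collecting, $\E[N^T_a]\le(\log T)/\kappa_a+O(\epsilon\log T)+O(1/\epsilon^2)+O(1)$ plus the polynomial-in-$1/\epsilon$ losses incurred in the two previous steps; balancing these against $\epsilon\log T$ forces $\epsilon\asymp\log^{-1/10}T$ and gives the stated $\cO(\log^{9/10}T)$ remainder, and multiplying by $\Delta_a$ and summing over suboptimal arms yields the regret bound.

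\emph{The main obstacle} is the $\mathrm{(A)}$ step: measuring the closeness of $\widehat p^{\,(j)}_a$ to $p_a$ in total variation would reintroduce a factor $m$ in the burn-in, so the entire ``is the empirical law good enough'' analysis must be pushed through the dual representation of $\Kinf$, where the relevant quantities are bounded scalar functions of the observations rather than the full histogram. A related nuisance is that the dual maximiser may approach the boundary $1/(\ub-\mu)$ as the posterior endpoint mass $\up_a(m)\asymp1/j$ shrinks, so $g_{\lambda^\star}$ has sup-norm of order $\log j$; it is precisely the bookkeeping of these logarithmic losses, combined with the large-deviation rate $c(\epsilon)\asymp\epsilon^2$ and the $O(\epsilon\log T)$ continuity loss, that determines the exponent $9/10$.
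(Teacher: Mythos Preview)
Your high-level approach matches the paper's: the prior places $\Theta(1)$ mass on $\{0,m\}$ so Corollary~\ref{cor:two_sided_ineeq} applies with fixed $\varepsilon=\tfrac12$ at every round while the total prior weight stays $O(1)$ in $m$; and all concentration of the empirical law is routed through the variational form of $\Kinf$ to avoid histogram-level (hence $m$-dependent) bounds. Your $\mathrm{(A)}$ analysis is essentially the paper's $\mathrm{PostCV}(\varepsilon)$ plus part of $\mathrm{Conc}(\varepsilon)$.

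There are two genuine gaps.

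\textbf{The decomposition and the Agrawal--Goyal step.} Your $\mathrm{(B)}=\sum_{t}\P(w^t_{a^\star}f<y)$ is \emph{not} controlled by $\sum_{j}\E[1/p^{a^\star}_j-1]$. Between consecutive pulls of $a^\star$ the posterior $\alpha^t_{a^\star}$ is frozen; if a third arm happens to dominate, block $j$ can last arbitrarily many rounds while $\mathrm{(B)}$ accrues $q_j=1-p^{a^\star}_j$ per round, with no pull of $a^\star$ to terminate it. The ratio trick requires the indicator $\ind\{A^t=a\}$ inside the second term: the paper (following \citealp{agrawal2013further,riou20a}) splits as $\ind\{A^t=a,w^t_af\ge y\}+\ind\{A^t=a,w^t_af<y\}$ and only then establishes
\[
\P[A^t=a,\,w^t_af<y\mid\cF^{t-1}]\ \le\ \frac{\P[w^t_{a^\star}f<y\mid\cF^{t-1}]}{1-\P[w^t_{a^\star}f<y\mid\cF^{t-1}]}\,\P[A^t=a^\star\mid\cF^{t-1}],
\]
which allows the reindexing by $n=N^t_{a^\star}$.

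\textbf{Where the exponent $9/10$ actually comes from.} Even after this fix, the pre-convergence term is not $O(1)$; it is $O(\varepsilon^{-9})$ and it --- not $\mathrm{(A)}$ --- drives the choice $\varepsilon\asymp\log^{-1/10}T$. The bottleneck is the subcase where the optimal arm's posterior mean lies \emph{below} $y$: there one must \emph{lower}-bound $\P(w^{(n)}_{a^\star}f>y)$ via the lower half of Theorem~\ref{thm:bound_dbc}, and the exponent $(n+\ualpha^0-1)\Kinf(\up^{+}(\alpha^{(n)}_{a^\star}),\mu^\star,f)$ is controlled only by a deviation bound of the form $\le 3\log(2n+1)+O(1)$ (the paper's event $\cE^{\conc,3}$, Lemma~\ref{lem:kinf_deviation}). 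This gives $1/\P(w^{(n)}_{a^\star}f>y)\lesssim n^{3}\sqrt{\log n}$, which is then weighted by the Hoeffding probability $\exp(-c\,n\varepsilon^2)$ that the posterior mean is this pessimistic; summing in $n$ and balancing yields the $O(\varepsilon^{-9})$. Your attribution of the $9/10$ to the $\log j$ blow-up of $\|g_{\lambda^\star}\|_\infty$ in $\mathrm{(A)}$ is not where the paper's bound bites; the paper actually shows $\mathrm{Conc}(\varepsilon)=O(1)$ by choosing the ``good'' threshold as the $n$-dependent $\teps(n)$ rather than a fixed $\kappa_a-C\epsilon$.
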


Note that the previous analysis by \citet{riou20a} (see also \citealp{baudry2023general}) relies on a deviation inequality for weighted sum of Dirichlet random variables of the form 
$\P(w_a^t f \geq \mu) \gtrsim n^{-m} \exp(-n \Kinf^{(m)}(\nu_a^n,\mu))$ for $\mu > \mu_a^n$  where $n=N_a^t$ is the number of pulls of arm $a$, $\mu_a^n$ is the mean of the empirical distribution $\nu_a^n$ of the $n$ rewards collected from arm $a$ at time $t$. The exponential dependency on the size of the support $m$ directly translates in a regret bound that is exponential in $m$. On the contrary, by using Theorem~\ref{thm:bound_dbc}, we obtain a sharper deviation inequality of the form $\P(w_a^t f \geq \mu) \gtrsim n^{-1/2} \exp(-n \Kinf^{(m)}(\nu_a^n,\mu))$  which allows us to obtain a regret bound independent of the size of the support. Remark also that our deviation inequality is very close to the one provided by \citet{tiapkin2022dirichlet}. However let us point out to two major differences. First, in order to prove their inequality, the authors in \citet{tiapkin2022dirichlet} need an extra prior Dirichlet parameter $\alpha_{2m}^0$ for an artificial reward of size $2$. Furthermore,  this parameter has to be of order $\cO(\log(T))$ instead of a constant as in our current setting. The latter improvement is essential for deriving the bounds of Theorem~\ref{thm:multinomial_ts_regret}.

\subsection{Bounded reward}

In this section, we assume that $\cV =\cP[0,1]$ is the space of all finite probability measures supported on the segment $[0,1]$. Note that for this setting $\Kinf^{\cV}(\nu_a,\mu) = \Kinf(\nu_a,\mu)$ is defined in Section~\ref{sec:main_results} for $\ub = 1$.
Next, we extend the \MTS algorithm to the bounded reward setting by randomized rounding as discussed by \citet{riou20a}, see also \citet{agrawal2013further}. We fix a size $m$ and consider the finite grid $\{0,1/m,\ldots,1\}$. If  $Y \in [i/m,(i+1)/m)$ for some $i\in\{0,\ldots,m-1\}$, we sample a new reward $\tY = (i+B)/m$ with $B\sim \Ber(mY-i) $ that has the same expectation as $Y$ and follows  multinomial distribution. Then we can just feed the transformed reward $\tY$ to the \MTS algorithm. We call this algorithm rounded multinomial Thompson sampling (\ourAlgBounded) and provide a complete description in Algorithm~\ref{alg:RMTS} of Appendix~\ref{app:mts_algorithms}.
\par
We denote by $\nu_a^{(m)}$ the distribution of the transformed reward $\tY$ from the arm $a$. Thanks to the Lemma~6 by \cite{riou20a}, we can quantify the effect of the rounding procedure on the minimal Kullback-Leibler divergence
\[
    \Kinf(\nu_a, \mustar) -\frac{1}{m(1-\mustar) - 1} \leq \Kinf^{(m)}(\nu_a^{(m)}, \mustar) \leq \Kinf(\nu_a, \mustar)\,.
\]
Thus, if we choose a grid-size of order $m = \cO(\log T)$, by combining the previous inequality and the regret bound for \MTS, we prove that \ourAlgBounded is problem-dependent optimal for the case of bounded rewards. Let us stress  that such a result can be obtained only if one has a regret bound for \MTS that does not depend on the size of the support $m$.

\begin{theorem}
\label{thm:bounded_ts_regret}
    For \ourAlgBounded, with the same prior as in Theorem~\ref{thm:multinomial_ts_regret}, it holds for all sub-optimal arms $a,$
\[
        \E[N^T_a] \leq  \frac{\log T}{\Kinf(\nu_a,\mustar)}  + \cO\left( \log^{9/10}(T) \right)
\]
    and consequently 
    \[
        \regret^T \leq \sum_{a: \Delta_a > 0} \frac{\Delta_a \log(T)}{\Kinf(\nu_a,\mustar)} + \cO\left( \log^{9/10}(T) \right).  
    \]
\end{theorem}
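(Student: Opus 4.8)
The plan is to reduce the bounded-reward case to the already-established multinomial result (Theorem~\ref{thm:multinomial_ts_regret}) by controlling the distortion introduced by the randomized rounding. First I would observe that, conditionally on the rounding randomness being resampled at every pull, \ourAlgBounded{} run on the original bandit $\bnu$ is \emph{exactly} \MTS{} run on the rounded bandit $\bnu^{(m)} = (\nu_1^{(m)},\ldots,\nu_K^{(m)})$ with the grid $\{0,1/m,\ldots,1\}$: the transformed reward $\tY$ is supported on the grid, has the same conditional mean as $Y$, and is fed verbatim to \MTS. Hence Theorem~\ref{thm:multinomial_ts_regret} applies to the rounded instance and gives, for every sub-optimal arm $a$ of $\bnu^{(m)}$,
\[
    \E[N_a^T] \le \frac{\log T}{\Kinf^{(m)}(\nu_a^{(m)},\mu^{\star,(m)})} + \cO\!\left(\log^{9/10}(T)\right),
\]
where $\mu^{\star,(m)}$ is the optimal mean of the rounded bandit. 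Since rounding preserves means exactly ($\E_{\tY}[\tY] = \mu_a$ for all $a$), we have $\mu^{\star,(m)} = \mustar$, the gaps $\Delta_a$ are unchanged, and the set of sub-optimal arms is the same.

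The second step is to transfer the denominator from $\Kinf^{(m)}(\nu_a^{(m)},\mustar)$ to $\Kinf(\nu_a,\mustar)$ using Lemma~6 of \citet{riou20a} as quoted in the excerpt:
\[
    \Kinf(\nu_a,\mustar) - \frac{1}{m(1-\mustar)-1} \le \Kinf^{(m)}(\nu_a^{(m)},\mustar) \le \Kinf(\nu_a,\mustar).
\]
The lower bound on $\Kinf^{(m)}$ is what matters: it gives
\[
    \frac{1}{\Kinf^{(m)}(\nu_a^{(m)},\mustar)} \le \frac{1}{\Kinf(\nu_a,\mustar) - \tfrac{1}{m(1-\mustar)-1}} = \frac{1}{\Kinf(\nu_a,\mustar)}\cdot\frac{1}{1 - \tfrac{1}{(m(1-\mustar)-1)\Kinf(\nu_a,\mustar)}},
\]
and I would expand this to first order as $\frac{1}{\Kinf(\nu_a,\mustar)} + \cO\!\big(\frac{1}{m}\big)$, treating $\mustar < 1$ and $\Kinf(\nu_a,\mustar) > 0$ as fixed instance constants (the case $\mustar = 1$ or $\Delta_a = 0$ being trivial/excluded). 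Multiplying by $\log T$ turns this into $\frac{\log T}{\Kinf(\nu_a,\mustar)} + \cO\!\big(\frac{\log T}{m}\big)$.

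The final step is the choice of grid size. Taking $m = \lceil \log T \rceil$ (or any $m = \Theta(\log T)$), the distortion term becomes $\cO(\log T / m) = \cO(1)$, which is absorbed into the $\cO(\log^{9/10}(T))$ remainder; simultaneously, the $\cO(\log^{9/10}(T))$ term coming from Theorem~\ref{thm:multinomial_ts_regret} is genuinely independent of $m$, so growing $m$ with $T$ costs nothing there — this is precisely why a support-free bound for \MTS{} is indispensable. Summing $\Delta_a \E[N_a^T]$ over sub-optimal arms gives the regret bound. The main obstacle is bookkeeping rather than conceptual: one must check that the rounding reduction is valid at the level of the \emph{algorithm's} law (i.e.\ that $B \sim \Ber(mY-i)$ indeed makes $\tY$ multinomial with the right parameters and that the posterior updates in \MTS{} on the rounded stream coincide with those of \ourAlgBounded{}), and that the $m$-dependence hidden inside the $\cO(\log^{9/10} T)$ term of Theorem~\ref{thm:multinomial_ts_regret} is truly absent — which is exactly the content of that theorem's last sentence, so it may be invoked directly. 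I would also double-check that the constraint $m(1-\mustar) - 1 > 0$ needed for Lemma~6 holds for $T$ large enough once $m = \Theta(\log T)$, which it does since $\mustar < 1$ is fixed.
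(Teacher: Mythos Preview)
Your proposal is correct and follows essentially the same route as the paper: reduce \ourAlgBounded{} to \MTS{} on the rounded bandit $\bnu^{(m)}$ (noting that means and gaps are preserved), apply Theorem~\ref{thm:multinomial_ts_regret}, use the lower bound of Lemma~6 in \citet{riou20a} together with a first-order expansion $1/(1-x)\le 1+2x$ to replace $\Kinf^{(m)}(\nu_a^{(m)},\mustar)$ by $\Kinf(\nu_a,\mustar)$ at cost $\cO(\log T/m)$, and finally take $m=\Theta(\log T)$. The paper's proof is slightly terser but identical in structure, including the observation that the condition $m(1-\mustar)-1>0$ (and the stronger condition needed for the first-order expansion) holds for $T$ large enough.
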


Interestingly \ourAlgBounded only requires a space-complexity of order $\cO(K\log(T))$ and has per-round time-complexity of order $\cO(K\log(T))$. Compared to  the space-complexity of order $\cO(T)$ and time-complexity of order $\cO(T)$ for \NPTS, we observe a significant improvement  while preserving the problem-dependent optimality.

\begin{remark}

The \ourAlgBounded algorithm assumes that the time horizon $T$ is known in advance. However, in cases where $T$ is unknown, we can use a doubling trick with the same time complexity. The modified algorithm works as follows: we keep track of all the rewards obtained from the real model, but after reaching $2^k$ episodes, we increase the size of the support from $k$ to $k+1$ by applying the rounding procedure again to each of the $2^k$ samples. The total cost of the rounding procedure after $T$ episodes will be no greater than $\sum_{k=0}^{\lfloor \log_2 T \rfloor } 2^k \leq 2T$. This means that the amortized per-round time complexity of the rerounding step is just $\cO(1)$. Although this modification increases the space complexity to $\cO(T)$, it still preserves the original time complexity of $\cO(K \log(T))$ in the case of an unknown $T$.

\end{remark}

\section{Proof of the main results}

In this section we present proofs of our main results on deviation bounds for Dirichlet projections: Theorem~\ref{thm:bound_dbc} and Corollary~\ref{cor:bound_dbc_all_u}.

\subsection{Proof of Theorem~\ref{thm:bound_dbc}}
First based on the integral representation for the density of weighted Dirichlet sums (Proposition~\ref{prop:density_linear_dirichlet}) and its nonasymptotic expansion (Proposition~\ref{prop:asymptotics_integral}), we prove the following lemma.
\begin{lemma}\label{lem:bounds_dirichlet_density}
     Consider a function $f \colon \{0,\ldots,m\} \to [0,b]$ such that $f(0) = 0$ and $f(m) = b$, and a fixed positive vector $\alpha = (\alpha_0, \alpha_1, \ldots, \alpha_m) \in \R_+^{m+1}$. Define $\up \in \simplex_m$ such that $\up(\ell) = \alpha_\ell / \ualpha$ for $\ualpha = \sum_{j=0}^m \alpha_j$. Let $\varepsilon > 0$ and assume that $\alphamin = \min\{\alpha_0, \alpha_n\} \geq c_0 \cdot \varepsilon^{-2}$, where
     \begin{equation}\label{eq:c0_definition}
        c_0 = \frac{2}{\pi} \left( 8 + \frac{49 \sqrt{6}}{9} \right)^2.
    \end{equation}
    \begin{itemize}
        \item Let $w^+ \sim \Dir(\alpha^+)$ for $\alpha^+ = (\alpha_0 , \alpha_1,\ldots,\alpha_{m-1},\alpha_m+1)$. Then for any $u \in (\up f, \ub),$ the density of the random variable $Z^+ = w^+ f$ could be lower-bounded as follows
        \[
            p_{Z^+}(u) \geq (1 - \varepsilon) \sqrt{\frac{\ualpha}{2\pi (1- \lambda^\star(\ub - u))^2 \sigma^2}} \cdot \exp(-\ualpha \Kinf(\up, u, f)).
        \]
        \item Let $w^- \sim \Dir(\alpha^-)$  for $\alpha^- = (\alpha_0+1, \alpha_1, \ldots, \alpha_{m-1}, \alpha_m)$. Then for any $u \in (\up f, \ub)$ the density of a random variable $Z^- = w^- f$ could be upper-bounded as follows
        \[
            p_{Z^-}(u) \leq (1 + \varepsilon) \sqrt{\frac{\ualpha}{2\pi (1 + \lambda^\star u)^2 \sigma^2}} \cdot \exp(-\ualpha \Kinf(\up, u, f)).
        \]
    \end{itemize}
    
\end{lemma}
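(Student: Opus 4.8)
The plan is to prove Lemma~\ref{lem:bounds_dirichlet_density} by starting from the exact integral representation of the density of $wf$ provided by Proposition~\ref{prop:density_linear_dirichlet} and then applying the nonasymptotic saddle-point expansion of Proposition~\ref{prop:asymptotics_integral}. Concretely, the density $p_{Z^+}(u)$ of $Z^+ = w^+ f$ with $w^+ \sim \Dir(\alpha^+)$ should be expressible as a contour integral whose integrand is of the form $\exp(\ualpha \cdot \phi(z))$ times a slowly varying prefactor, where $\phi$ is built from the cumulant-type function $z \mapsto \sum_\ell \up(\ell)\log(1 - z(f(\ell)-u))$ (or its analogue after the shift by $\ub - u$). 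The first task is to identify the saddle point $z = \lambda^\star$ as the critical point of the exponent, and to recognize that the value of the exponent at the saddle is exactly $-\Kinf(\up, u, f)$; this is the variational identity linking the Legendre-type dual of the log-moment-generating function to the reverse information projection $\Kinf$, and it is where the hypothesis $u \in (\up f, \ub)$ guarantees existence and the right sign of $\lambda^\star$.

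**Next I would** feed the representation into Proposition~\ref{prop:asymptotics_integral} to get the leading term $\sqrt{\ualpha / (2\pi (\cdot)^2 \sigma^2)} \cdot \exp(-\ualpha \Kinf(\up,u,f))$, where $\sigma^2$ is the second derivative of the exponent at the saddle (a variance-type quantity) and the factor $(1 - \lambda^\star(\ub-u))^2$ (respectively $(1+\lambda^\star u)^2$) comes from the slowly varying prefactor evaluated at the saddle — this is precisely the discrepancy between $\alpha$ and the shifted vectors $\alpha^\pm$, i.e. the one extra unit on the last (respectively first) coordinate. The one-sided nature of the bound — a clean $(1-\varepsilon)$ lower bound for $Z^+$ and a $(1+\varepsilon)$ upper bound for $Z^-$ — should come out because the shift direction is chosen so that the remainder term in the expansion has a definite sign, or at least can be bounded one-sidedly; the choice of $\alpha^+$ versus $\alpha^-$ is exactly what makes the error term cooperate. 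The constant $c_0$ in \eqref{eq:c0_definition} is then read off from the explicit remainder bound of Proposition~\ref{prop:asymptotics_integral}: requiring the remainder to be at most $\varepsilon$ in relative terms forces $\alphamin \geq c_0 \varepsilon^{-2}$, and the numbers $8$ and $49\sqrt{6}/9$ should be traceable to the constants appearing in that proposition's error estimate.

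**The main obstacle** I anticipate is the bookkeeping around the prefactor and the precise form of the geometric factors $(1-\lambda^\star(\ub-u))^{-2}$ and $(1+\lambda^\star u)^{-2}$: one must carefully track how the single-unit perturbation of the Dirichlet parameter propagates through the integral representation (which involves a ratio of Gamma functions and a product over coordinates) into an evaluation at the saddle point, and confirm that the relevant factor is positive so that the stated inequality makes sense — this requires knowing that $\lambda^\star \in (0, 1/(\ub - u))$ for the $Z^+$ case and $\lambda^\star \in (-1/u, 0)$ or the appropriate range for $Z^-$, again a consequence of $u$ lying strictly between $\up f$ and $\ub$. A secondary subtlety is ensuring the expansion of Proposition~\ref{prop:asymptotics_integral} applies uniformly in $u$ on the open interval $(\up f, \ub)$; near the endpoints $\sigma^2$ or the geometric factors may degenerate, so I would check that the hypotheses of that proposition are met throughout, possibly after reparametrizing in terms of the saddle point $\lambda^\star$ rather than $u$ itself.

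**Finally**, I would assemble the two bullets: the lower bound for $p_{Z^+}$ and the upper bound for $p_{Z^-}$ follow by the same argument applied with $\alpha^+$ and $\alpha^-$ respectively, the only change being which coordinate absorbs the extra unit and hence whether the geometric prefactor involves $\ub - u$ at coordinate $m$ or $u$ at coordinate $0$; the symmetry $f(0)=0$, $f(m)=\ub$ is what makes these two cases mirror images of each other under $f \mapsto \ub - f$, $u \mapsto \ub - u$, which is the same symmetry already exploited in the definition of $\Kinfstar$.
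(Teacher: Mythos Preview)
Your approach is essentially the same as the paper's: combine Proposition~\ref{prop:density_linear_dirichlet} with Proposition~\ref{prop:asymptotics_integral} (choosing $\ell=m$ for $Z^+$ and $\ell=0$ for $Z^-$), identify the leading term, and then bound the remainder by $\varepsilon$ under the hypothesis $\alphamin \geq c_0\varepsilon^{-2}$. The constants $8$ and $49\sqrt{6}/9$ are indeed exactly $c_2$ from Proposition~\ref{prop:asymptotics_integral}, and $c_0 = 2c_2^2/\pi$ arises from solving $c_2/\sqrt{2\pi\alphamin}\le \varepsilon/2$, which is the dominant remainder term.

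Two misconceptions are worth correcting, though they do not derail the argument. First, the one-sidedness of the bounds does \emph{not} come from the remainder having a definite sign. All three remainder terms $R_1,R_2,R_3$ in Proposition~\ref{prop:asymptotics_integral} are bounded in absolute value, so the expansion yields a two-sided estimate $(1\pm\varepsilon)$ in \emph{both} cases; the paper simply records the inequality direction that will be used downstream. The role of the shift $\alpha^\pm$ is only to produce the extra factor $1/(1+\rmi(f(\ell)-u)s)$ in the integrand, which after the saddle-point evaluation becomes the prefactor $(1-\lambda^\star(f(\ell)-u))^{-1}$; with $f(m)=\ub$ this is $(1-\lambda^\star(\ub-u))^{-1}$, and with $f(0)=0$ it is $(1+\lambda^\star u)^{-1}$. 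Second, $\lambda^\star$ is the same quantity in both bullets, determined by the unshifted $\up$ and lying in $(0,1/(\ub-u))$ since $u>\up f$; it is never negative. There is no separate ``$\lambda^\star$ for $Z^-$'' and no need to worry about a range like $(-1/u,0)$.
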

\begin{proof}
    We start the proof from the combination of Proposition~\ref{prop:density_linear_dirichlet} and Proposition~\ref{prop:asymptotics_integral} for $\ell = m$
    \[
        p_{Z^+}(u) \geq \frac{\ualpha}{\sqrt{2\pi}}  \left( 1 - \frac{c_1 \cdot \rme^{-c_{\kappa} \alphamin}}{\sqrt{2 \pi \alphamin}} - \frac{c_2}{\sqrt{2\pi \alphamin}} - \frac{c_3 \cdot \rme^{-c_{\kappa} \alphamin}}{\sqrt{2\pi}} \right) \frac{\exp(-\ualpha \Kinf(\up, u, f))}{(1 - \lambda^\star(\ub - u)) \sqrt{\ualpha \cdot \sigma^2}}.
    \]
    The goal is to find a lower bound for $\alphamin$ such that the following inequality will be guaranteed.
    \begin{align}\label{eq:lb_dirichlet_density_approx}
        \frac{c_1 \cdot \exp(-c_{\kappa} \alphamin)}{\sqrt{2 \pi c_{\kappa} \alphamin}} + \frac{c_2}{\sqrt{2\pi \alphamin}} + \frac{c_3 \cdot \exp(-c_{\kappa} \alphamin)}{\sqrt{2\pi}} \leq \varepsilon.
    \end{align}
    Notice that the second term is the most dominant for large $\alphamin$. First, we have to guarantee that
    \[
        \frac{c_2}{\sqrt{2\pi \alphamin}} \leq \varepsilon/2, 
    \]
    which satisfied for $\alphamin \geq \frac{4 c_2^2}{2\pi \varepsilon^2}$. To satisfy \eqref{eq:lb_dirichlet_density_approx} it is enough to guarantee that
    \[
        \left( \frac{c_1}{2 c_2 \sqrt{c_{\kappa}}} \cdot \varepsilon + \frac{c_3}{\sqrt{2\pi}} \right) \exp(-c_{\kappa} \alphamin) \leq \varepsilon/2 \iff \alphamin \geq \frac{1}{c_{\kappa}} \log\left( \frac{c_1}{c_2 \sqrt{c_{\kappa}}}  + \frac{2c_3}{\sqrt{2\pi}} \cdot \frac{1}{\varepsilon} \right).
    \]
    Next, we notice that for any $\varepsilon \in (0,1)$ the following inequality holds due to the specific choice of $c_1,c_2,c_3,c_{\kappa}$
    \[
        \frac{2c_2^2}{\pi \varepsilon^2} \geq  \frac{1}{c_{\kappa}} \log\left(  \frac{c_1}{c_2 \sqrt{c_{\kappa}}}  + \frac{2c_3}{\sqrt{2\pi}} \cdot \frac{1}{\varepsilon} \right).
    \]
    Therefore we have that for $\alphamin \geq 2c_2^2 / (\pi \varepsilon^2)$ the first statement of Lemma~\ref{lem:bounds_dirichlet_density} holds. For the second statement is is enough to apply the same combination of Proposition~\ref{prop:density_linear_dirichlet} and Proposition~\ref{prop:asymptotics_integral} but for $\ell = 0$.
\end{proof}
Before proceeding with the final proof, we derive one important technical result. 
\begin{lemma}\label{lem:kinf_bounds}
    For any $u \in (\up f, \ub)$ it holds 
    \[ 
    \frac{1}{2} (\lambda^\star)^2 \sigma^2  \big(1-\lambda^\star(\ub-u)\big)^2 \leq \Kinf(\up, u, f) \leq \frac{1}{2} (\lambda^\star)^2 \sigma^2  \big(1+\lambda^\star u\big)^2\,. 
    \]
\end{lemma}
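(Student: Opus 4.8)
First I would recall the variational characterization of $\Kinf(\up, u, f)$. By standard convex-duality arguments (as in \citealp{honda2010asymptotically,garivier2018kl}), for $u \in (\up f, \ub)$ one has
\[
\Kinf(\up, u, f) = \max_{\lambda \in [0, 1/(\ub - u)]} \E_{\ell \sim \up}\bigl[\log\bigl(1 - \lambda(f(\ell) - u)\bigr)\bigr],
\]
and the maximizer $\lambda^\star$ is characterized by the first-order condition $\E_{\up}\bigl[\tfrac{f(\ell) - u}{1 - \lambda^\star(f(\ell) - u)}\bigr] = 0$. I expect that the quantities $\sigma^2$ and $\lambda^\star$ appearing in the statement are exactly those from the saddle-point analysis of Section~\ref{sec:saddle-point}: $\lambda^\star$ is this optimal dual variable, and $\sigma^2$ is the variance of $f$ under the tilted (M-projection) measure $q^\star$ defined by $q^\star(\ell) \propto \up(\ell)/(1 - \lambda^\star(f(\ell) - u))$, i.e. $\sigma^2 = \Var_{q^\star}(f)$. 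The plan is to express $\Kinf$ via this tilted measure and then sandwich it.

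**Key steps.** The first step is to write $\Kinf(\up, u, f) = \KL(\up, q^\star)$ and use the identity $\KL(\up, q^\star) = -\E_{\up}[\log(q^\star/\up)] = \E_{\up}[\log(1 - \lambda^\star(f - u))] + \log c^\star$ where $c^\star$ is the normalizing constant; since $q^\star f = u$ one checks $\log c^\star = 0$, so $\Kinf(\up,u,f) = \E_{\up}[\log(1 - \lambda^\star(f(\ell)-u))]$. The second step is to analyze this expectation as a function along the path. I would introduce $\varphi(t) = \E_{\up}[\log(1 - t(f(\ell) - u))]$ on $t \in [0, \lambda^\star]$, note $\varphi(0) = 0$, $\varphi'(0) = -\E_{\up}[f - u] = -(\up f - u) > 0$, and $\varphi'(\lambda^\star) = 0$ (the optimality condition). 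Then $\Kinf = \varphi(\lambda^\star) = \int_0^{\lambda^\star} \varphi'(t)\, dt = -\int_0^{\lambda^\star} t\, \varphi''(t)\, dt$ after integration by parts using $\varphi'(\lambda^\star)=0$. Now $-\varphi''(t) = \E_{\up}\bigl[\tfrac{(f(\ell)-u)^2}{(1 - t(f(\ell)-u))^2}\bigr] > 0$, so $\Kinf = \int_0^{\lambda^\star} t\, \E_{\up}\bigl[\tfrac{(f-u)^2}{(1-t(f-u))^2}\bigr] dt$.

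**Closing the bounds.** The third step is to control $-\varphi''(t)$ for $t \in [0,\lambda^\star]$. The range of $f$ is $[0,\ub]$, so $f(\ell) - u \in [-u, \ub - u]$, hence for $t \le \lambda^\star$ the denominator satisfies $1 - \lambda^\star(\ub - u) \le 1 - t(f(\ell)-u) \le 1 + \lambda^\star u$. Therefore
\[
\frac{1}{(1+\lambda^\star u)^2}\,\E_{\up}[(f-u)^2] \;\le\; -\varphi''(t) \;\le\; \frac{1}{(1-\lambda^\star(\ub-u))^2}\,\E_{\up}[(f-u)^2].
\]
The remaining subtlety is to relate $\E_{\up}[(f(\ell)-u)^2]$ to $\sigma^2 = \Var_{q^\star}(f)$. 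One clean route: differentiate the optimality condition, or equivalently use that $-\varphi''(\lambda^\star) = \E_{\up}\bigl[\tfrac{(f-u)^2}{(1-\lambda^\star(f-u))^2}\bigr] = \E_{q^\star}\bigl[\tfrac{(f-u)}{1-\lambda^\star(f-u)}\bigr]\cdot(\dots)$ — more directly, a change-of-measure gives $\E_{q^\star}[(f-u)^2] = \E_{\up}\bigl[\tfrac{(f-u)^2}{c^\star(1-\lambda^\star(f-u))}\bigr]$, and since $q^\star f = u$ this equals $\sigma^2$. I would then argue that on $t \in [0,\lambda^\star]$ the integrand $t \cdot (-\varphi''(t))$ is sandwiched between $t\sigma^2(1-\lambda^\star(\ub-u))^{-2}$-type and $t\sigma^2(1+\lambda^\star u)^{-2}$-type bounds — being slightly careful that the natural "reference" variance is the tilted one, which is why the factors $(1-\lambda^\star(\ub-u))^2$ and $(1+\lambda^\star u)^2$ appear asymmetrically — and integrate $\int_0^{\lambda^\star} t\,dt = (\lambda^\star)^2/2$ to obtain
\[
\tfrac12 (\lambda^\star)^2 \sigma^2 (1 - \lambda^\star(\ub-u))^2 \le \Kinf(\up,u,f) \le \tfrac12 (\lambda^\star)^2 \sigma^2 (1+\lambda^\star u)^2.
\]
The main obstacle I anticipate is bookkeeping the change-of-measure identities so that $\sigma^2$ (the tilted variance, which is the one that actually shows up in the density asymptotics of Lemma~\ref{lem:bounds_dirichlet_density}) appears with exactly the right power of $(1-\lambda^\star(\ub-u))$ or $(1+\lambda^\star u)$ on each side; getting the direction of each inequality right requires tracking whether $|1 - t(f-u)|$ is being bounded above or below and whether that helps or hurts after the division. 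Everything else is elementary calculus on the one-dimensional function $\varphi$.
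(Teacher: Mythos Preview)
Your overall strategy via the one-variable function $\varphi(t)=\E_{\up}[\log(1-t(f-u))]$ is the right one and is essentially what the paper does, but two concrete points in your execution would make the argument fail as written.

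First, your identification of $\sigma^2$ is off. In this paper $\sigma^2$ is \emph{not} $\Var_{q^\star}(f)=\E_{\up}\bigl[(f-u)^2/(1-\lambda^\star(f-u))\bigr]$; it is defined (see Proposition~\ref{prop:asymptotics_integral}) as
\[
\sigma^2 \;=\; \E_{\up}\!\left[\frac{(f(X)-u)^2}{(1-\lambda^\star(f(X)-u))^2}\right] \;=\; -\varphi''(\lambda^\star),
\]
with the denominator squared. This matters for the powers of $(1-\lambda^\star(\ub-u))$ and $(1+\lambda^\star u)$ that you end up with.

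Second, your detour through the untilted moment $\E_{\up}[(f-u)^2]$ loses a factor. From $1-\lambda^\star(\ub-u)\le 1-t(f-u)\le 1+\lambda^\star u$ you correctly get
\[
\frac{\E_{\up}[(f-u)^2]}{(1+\lambda^\star u)^2}\le -\varphi''(t)\le \frac{\E_{\up}[(f-u)^2]}{(1-\lambda^\star(\ub-u))^2},
\]
but then converting $\E_{\up}[(f-u)^2]$ to $\sigma^2$ via the same inequality introduces a \emph{second} pair of these factors, yielding only
\[
\tfrac12(\lambda^\star)^2\sigma^2\,\frac{(1-\lambda^\star(\ub-u))^2}{(1+\lambda^\star u)^2}\le \Kinf \le \tfrac12(\lambda^\star)^2\sigma^2\,\frac{(1+\lambda^\star u)^2}{(1-\lambda^\star(\ub-u))^2},
\]
which is strictly weaker than the statement. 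The fix is to compare $-\varphi''(t)$ \emph{directly} to $\sigma^2=-\varphi''(\lambda^\star)$ by writing
\[
-\varphi''(t)=\E_{\up}\!\left[\frac{(f-u)^2}{(1-\lambda^\star(f-u))^2}\left(\frac{1-\lambda^\star(f-u)}{1-t(f-u)}\right)^{\!2}\right]
\]
and observing that the ratio $(1-\lambda^\star(f-u))/(1-t(f-u))$ lies in $[1-\lambda^\star(\ub-u),\,1+\lambda^\star u]$ for every $t\in[0,\lambda^\star]$ and every value of $f$ (check the cases $f\le u$ and $f\ge u$ separately). This gives $\sigma^2(1-\lambda^\star(\ub-u))^2\le -\varphi''(t)\le \sigma^2(1+\lambda^\star u)^2$, and then either your integration-by-parts identity $\Kinf=\int_0^{\lambda^\star}t\,(-\varphi''(t))\,dt$ or the paper's Taylor remainder $\Kinf=\tfrac12(\lambda^\star)^2(-\varphi''(y\lambda^\star))$ finishes immediately.
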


\begin{proof}
Define the function $\phi_u(\lambda) = \E[\log\big(1-\lambda(f(X)-u)\big)]$ and $\lambda_u = \lambda^\star$. Remark that $\sigma^2 = -\phi_u''(\lambda_u)$. Thanks to the Taylor expansion of $\phi_u$ and the definition of $\lambda_u$ it holds 
\begin{align*}
    0 = \phi_u(0) &= \phi_u(\lambda_u) + 0 + \frac{\lambda_u^2}{2} \phi_u''(y\lambda_u) \iff \phi_u(\lambda_u) = \frac{\lambda_u^2}{2} (-\phi_u''(y\lambda_u))
\end{align*}
for some $y \in (0,1)$. 
We will bound the negative second derivative that appears above from both sides. First, note that 
\[
-\phi_u''(y\lambda_u) = \E\left[ \frac{(f(X)-u)^2}{\big(1-\lambda_u(f(X)-u)\big)^2}\left(\frac{1-\lambda_u(f(X)-u)}{1-y\lambda_u(f(X)-u)}\right)^2\right]\,.
\]
Let us consider two cases.

1) $f(X) \leq u$. Using the fact that $y\in (0,1)$ we can derive the following bounds
\[
    1 + \lambda_u u \geq \frac{1-\lambda_u(f(X)-u)}{1-y\lambda_u(f(X)-u)} \geq 1 \geq 1 - \lambda_u (\ub - u)\,.
\]

2) $f(X) \geq u$. By the symmetric argument 
\[
    1 + \lambda_u u \geq 1 \geq \frac{1-\lambda_u(f(X)-u)}{1-y\lambda_u(f(X)-u)} \geq 1-\lambda_u(b-u)\,.
\]
In particular, using the definition of $\phi''(\lambda_u) = -\sigma^2$, it entails that 
\[
    \sigma^2 \big(1-\lambda_u(b-u)\big)^2 \leq -\phi_u''(y\lambda_u) \leq \sigma^2 \big(1+\lambda_u u\big)^2\,.
\]
Plugging this inequality in the integral representation of $\phi_u$ allows us to conclude the statement.
\end{proof}

Using this lemma we may proceed with the proof of our final results.
\begin{proof}[Proof of Theorem~\ref{thm:bound_dbc}]
    Define $Z^{+} = wf$ for $w \sim \Dir(\alpha^+)$. By Lemma~\ref{lem:bounds_dirichlet_density},
   \begin{align*}
        \P_{w \sim \Dir(\alpha^+)}\left[wf \geq \mu\right] &= \int_{\mu}^{\ub} p_{Z^+}(u) \rmd u \geq (1 - \varepsilon) \sqrt{\frac{\ualpha}{2\pi}}  \cdot  \int_{\mu}^{\ub}  \frac{ \exp(-\ualpha \Kinf(\up, u, f))}{\sqrt{\sigma^2 (1 - \lambda^\star(\ub - u))^2}} \, \rmd u.
    \end{align*}
    By Theorem 6 by \cite{honda2010asymptotically},
    \[
        \frac{\partial}{\partial u}\Kinf(\up, u, f) = \lambda^\star.
    \]
    Thus, we can define a change of variables $t^2/2 = \Kinf(\up, u, f), t \rmd t = \lambda^\star \rmd u$ and write
    \begin{align*}
        \P\left[Z^+ \geq \mu\right] \geq (1 - \varepsilon)\int_{\sqrt{2 \Kinf(\up, \mu, f)}}^{+\infty} D^+(u) \sqrt{\frac{\ualpha}{2\pi}} \exp(-\ualpha t^2/2) \rmd t,
    \end{align*}
    where $D^+(u)$ is defined as follows
    \[
        D^+(u) = \sqrt{\frac{2 \Kinf(\up, u, f)}{(\lambda^\star)^2 \sigma^2 (1 - \lambda^\star(\ub - u))^2}}.
    \]
    By Lemma~\ref{lem:kinf_bounds}, $D^+(u) \geq 1$ and hence for $g \sim \cN(0,1)$
    \begin{align*}
        \P\left[Z^+ \geq \mu\right] &\geq (1 - \varepsilon)\int_{\sqrt{2 \Kinf(\up, \mu, f)}}^{+\infty} \sqrt{\frac{\ualpha}{2\pi}} \rme^{-\ualpha t^2/2} \rmd t = (1 - \varepsilon) \P\big( g \geq \sqrt{2 \ualpha \Kinf(\up, \mu, f)} \big).
    \end{align*}
For the second part of the statement let us define $Z^{-} = wf$ for $w \sim \Dir(\alpha^-)$. By Lemma~\ref{lem:bounds_dirichlet_density},
   \begin{align*}
        \P_{w \sim \Dir(\alpha^-)}\left[wf \geq \mu\right] &= \int_{\mu}^{\ub} p_{Z^-}(u) \rmd u \leq (1 + \varepsilon) \sqrt{\frac{\ualpha}{2\pi}}  \cdot  \int_{\mu}^{\ub}  \frac{ \exp(-\ualpha \Kinf(\up, u, f))}{\sqrt{\sigma^2 (1 + \lambda^\star u)^2}} \, \rmd u.
    \end{align*}
    By a change of variables $t^2/2 = \Kinf(\up, u, f), t \rmd t = \lambda^\star \rmd u$ 
    \begin{align*}
        \P\left[Z^{-} \geq \mu\right] \leq (1 + \varepsilon)\int_{\sqrt{2 \Kinf(\up, \mu, f)}}^{+\infty} D^-(u) \sqrt{\frac{\ualpha}{2\pi}} \exp(-\ualpha t^2/2) \rmd t,
    \end{align*}
    where $D^-(u)$ is defined as follows
    \[
        D^-(u) = \sqrt{ \frac{2 \Kinf(\up, u, f)}{(\lambda^\star)^2 \sigma^2 (1 + \lambda^\star u)^2} }.
    \]
    By Lemma~\ref{lem:kinf_bounds}, $D^-(u) \leq 1$ and hence
    \begin{align*}
        \P\left[Z^- \geq \mu\right] &\leq (1 + \varepsilon) \P_{g \sim \cN(0,1)}\big[ g \geq \sqrt{2 \ualpha \Kinf(\up, \mu, f)} \big].
    \end{align*}
\end{proof}

\subsection{Proof of Corollary~\ref{cor:bound_dbc_all_u}}

\begin{proof}
    By Theorem~\ref{thm:bound_dbc} both statements holds for $\mu \geq \up f$. Thus, we may assume that $\mu < \up f$ and therefore
    \[
        A(\up, \mu, f) = - \sqrt{2 \Kinf(\up, b-\mu, b-f)}.
    \]
    Let us start from the lower bound. We have for $w \sim \Dir(\alpha^+)$
    \begin{equation}\label{eq:reduction_to_high_expectation}
        \P(wf \geq \mu) = 1 - \P(wf \leq \mu) = 1 - \P(w(b-f) \geq b-\mu).
    \end{equation}
    Notice that for a function $b-f$ the role of $0$ and $m$ atoms has changed: $(b-f)(0) = b$ and $(b-f)(m) = 0$. To obtain the upper bound on the probability in the right-hand side, we can simply apply upper bound from Theorem~\ref{thm:bound_dbc}. 

    For any vector $x \in \R^{m+1}_+$ let us define the reversing operation $\rev(x)$ such that $y = \rev(x) \iff y_i = x_{m-i}$ for any $i \in \{0,\ldots,m\}$.  Then we define a vector $\beta = \rev(\alpha)$ and for this vector we may define $\beta^-$ and $\beta^+$ by increasing values of $\beta_0$ and $\beta_m$ by 1 correspondingly. Additionally, let us define a measure $\bar{q} \in \simplex_m$ such that $\bar{q} = \rev(\up)$ and a function $g = \rev(f)$. For $w \sim \Dir(\alpha^+)$ we have $w' = \rev(w)$ has distribution $\Dir(\beta^-)$ and, moreover, $w' (b - g) = w(b-f)$ and  $\bar{q}(b-g) \leq b-\mu$, therefore Theorem~\ref{thm:bound_dbc} applicable
    \[
        \P_{w' \sim \Dir(\beta^-)}[w' (b - g) \geq b-\mu] \leq (1 + \varepsilon)\P\left[ \zeta \geq \sqrt{2 \ualpha \Kinf(\bar{q}, b-g, b-\mu)} \right]
    \]
    for $\zeta \sim \cN(0,1)$. By noticing that $\Kinf(\bar{q}, b-g, b-\mu) = \Kinf(\up, b-f, b-\mu)$ we obtain the following upper bound
    \[
         \P(w(b-f) \geq b-\mu) \leq (1 + \varepsilon) \P\left[ \zeta \geq - \sqrt{\ualpha} \cdot A(\up, \mu, f) \right].
    \]
    Therefore, we have since $A(\up, \mu, f) < 0$ and $\zeta$ is symmetric
    \[
        \P(wf \geq \mu) \geq 1 - (1 + \varepsilon)  \P\left[ \zeta \geq - \sqrt{\ualpha} \cdot A(\up, \mu, f) \right] \geq (1-\varepsilon)  \P\left[ \zeta \geq \sqrt{\ualpha} \cdot A(\up, \mu, f) \right].
    \]
    The same holds for the upper bound, starting from representation \eqref{eq:reduction_to_high_expectation} and using a lower bound on probability of $\P[w(\ub - f) \geq b-\mu]$ for a reversed model.
\end{proof}

\appendix
\newpage
\part{Appendix}
\parttoc
\newpage

\section{Proof of Lemma~\ref{lem:bounds_dirichlet_density}}

Throughout this section we will often use the following notations.  Let $\Fclass_m(b) = \{f \colon \{0,\ldots,m\} \to [0,b] \mid f(0) = 0, f(m) = \ub \}$. For any $\alpha = (\alpha_0, \alpha_1, \ldots, \alpha_m) \in \R^{m+1}_{++}$ define  $\up = \up(\alpha) \in \simplex_{m}$ such that $\up(\ell) = \alpha_\ell/\ualpha, \ell = 0, \ldots, m$, where $\ualpha = \sum_{j=0}^m \alpha_j$.

\subsection{Density of weighted sum of the Dirichlet distribution}

In this section we compute the density of a random variable $Z = w f$, where $w \sim \Dir(\alpha)$ and $f \in \Fclass_m(b)$. 

\begin{proposition}\label{prop:density_linear_dirichlet}
    Let $f \in \Fclass_m(b)$ and $\alpha = (\alpha_0, \alpha_1, \ldots, \alpha_m) \in \R_+^{m+1}$ such that $\ualpha = \sum_{j=0}^m \alpha_j > 1$. Assume that $Z$ is not degenerate. Then for any $0 \leq u < \ub$
    \[
        p_{Z}(u) = \frac{\ualpha - 1}{2\pi} \int_\R \prod_{j=0}^m \left( 1 + \rmi(f(j) - u)s \right)^{-\alpha_j} \rmd s.
    \]
\end{proposition}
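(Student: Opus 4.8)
\emph{Strategy.} The plan is to realise the Dirichlet vector through independent Gamma variables: if $g_0,\dots,g_m$ are independent with $g_j\sim\Gamma(\alpha_j,1)$ and $S=\sum_{j=0}^m g_j$, then $S\sim\Gamma(\ualpha,1)$, the normalised vector $w=(g_0/S,\dots,g_m/S)$ is $\Dir(\alpha)$-distributed, and — crucially — $w$ and $S$ are independent. Under this coupling $Z=wf=\bigl(\sum_j f(j)g_j\bigr)/S$, so if we set, for the fixed $u\in[0,\ub)$,
\[
W\;:=\;\sum_{j=0}^m\bigl(f(j)-u\bigr)g_j\;=\;\Bigl(\sum_{j=0}^m f(j)g_j\Bigr)-uS\;=\;S\,(Z-u),
\]
then $W$ is a genuine linear combination of independent Gamma variables, while the quotient structure of $Z$ has been traded for a multiplicative factor $S$ independent of $Z-u$. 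The proof then has two parts: (i) express $p_Z(u)$ through the value $p_W(0)$ of the density of $W$ at the origin; (ii) compute $p_W(0)$ by Fourier inversion. Throughout, non-degeneracy of $Z$ forces $f$ to be non-constant (already $f(m)=\ub\neq u$), which guarantees that $Z$, hence $W=S(Z-u)$, is absolutely continuous, so that $p_Z$ and $p_W$ are well defined.

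\emph{Step 1: $p_Z(u)=(\ualpha-1)\,p_W(0)$.} Since $(Z,S)$ has product density $p_Z(z)\,f_S(s)$ with $f_S(s)=s^{\ualpha-1}e^{-s}/\Gamma(\ualpha)$, the change of variables $(z,s)\mapsto(w,s)=(s(z-u),s)$, whose inverse has Jacobian $1/s$, shows that $W$ has density $p_W(w)=\int_0^\infty p_Z\bigl(u+w/s\bigr)\,s^{\ualpha-2}e^{-s}\,\rmd s/\Gamma(\ualpha)$. Evaluating at $w=0$ and using $\int_0^\infty s^{\ualpha-2}e^{-s}\,\rmd s=\Gamma(\ualpha-1)$ — finite precisely because $\ualpha>1$, which is where that hypothesis enters — gives $p_W(0)=p_Z(u)\,\Gamma(\ualpha-1)/\Gamma(\ualpha)=p_Z(u)/(\ualpha-1)$.

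\emph{Step 2: Fourier inversion.} The characteristic function of $\Gamma(\alpha_j,1)$ is $t\mapsto(1-\rmi t)^{-\alpha_j}$, so by independence $\varphi_W(s)=\E[e^{\rmi sW}]=\prod_{j=0}^m(1-\rmi(f(j)-u)s)^{-\alpha_j}$, and Fourier inversion yields $p_W(0)=\frac1{2\pi}\int_\R\varphi_W(s)\,\rmd s$. Replacing $s$ by $-s$ conjugates each factor without changing the value of the integral, hence $p_W(0)=\frac1{2\pi}\int_\R\prod_{j=0}^m(1+\rmi(f(j)-u)s)^{-\alpha_j}\,\rmd s$; combining with Step~1 gives the asserted formula.

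\emph{Main obstacle.} The delicate point is the rigorous handling of the two density computations — the Fourier inversion in Step~2 and, correspondingly, the passage from the almost-everywhere identity for $p_W$ to its value at $0$ in Step~1. Since $|\varphi_W(s)|=\prod_{j}(1+(f(j)-u)^2s^2)^{-\alpha_j/2}$ decays like $|s|^{-\sum_{j:f(j)\neq u}\alpha_j}$, both steps are immediate once that exponent exceeds $1$; in the remaining low-mass cases one first inverts $\varphi_W(s)e^{-\delta s^2/2}$, which is always integrable and yields the density of $W+\sqrt\delta\,\zeta$ with $\zeta\sim\cN(0,1)$ independent, and then lets $\delta\downarrow0$, both sides converging because $p_W$ is continuous at $0$ and $p_Z$ is continuous on the interior of the support of $Z$. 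Equivalently, one proves the formula for a.e.\ $u$ and notes that its right-hand side is continuous in $u$, so that it provides the continuous version of $p_Z$. Apart from this bookkeeping, the argument reduces to the two displayed identities above.
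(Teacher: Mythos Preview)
Your argument is correct and takes a genuinely different — and considerably more economical — route than the paper's.

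The paper proceeds geometrically: it first writes $p_Z(u)$ as a Hausdorff integral of the Dirichlet density over the section $\simplex_m\cap\{wf=u\}$, then uses the positive homogeneity of the density to relate that $(m-1)$-dimensional section integral to an $m$-dimensional integral over the ``pyramid'' $\widetilde\simplex_m\cap\Hset_{f-u}$ (Lemma~\ref{lem:pyramid_volume}), which in turn is expressed via a Laplace-transform trick as an integral over the full half-space $\R_+^{m+1}\cap\Hset_{f-u}$ with an exponential weight (Lemma~\ref{lem:simplex_laplace}). Only at that stage does the paper apply Fourier inversion (Lemma~\ref{lm:simplex_fourier_integral}) to obtain the product of Gamma characteristic functions. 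Substantial bookkeeping with Jacobians of the canonical maps $\cL_a$, $\cL_{[a,\bOne]}$ is then needed to show that all constants cancel.

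Your proof short-circuits this entire development by exploiting the Gamma representation of $\Dir(\alpha)$ and the independence of the normalised vector $w$ from the total mass $S$. The identity $W=S(Z-u)$ with $S\perp Z$ immediately yields $p_Z(u)=(\ualpha-1)\,p_W(0)$ — this is precisely where the factor $\ualpha-1$ and the hypothesis $\ualpha>1$ arise — and then $p_W(0)$ is a one-line Fourier inversion for a sum of independent scaled Gamma variables. In effect, the probabilistic independence replaces the paper's chain Lemma~\ref{lem:pyramid_volume}$\to$Lemma~\ref{lem:simplex_laplace}$\to$Lemma~\ref{lm:simplex_fourier_integral} and makes the cancellation of constants automatic. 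The only residual care needed, which you flag correctly, is the integrability of $\varphi_W$ when $\sum_{j:f(j)\neq u}\alpha_j\le 1$; your Gaussian-smoothing remedy (or the equivalent ``a.e.\ plus continuity of the right-hand side'' remark) handles this. The paper's geometric derivation does not explicitly isolate this issue either, so your treatment is at least as careful.
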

Proof of Proposition~\ref{prop:density_linear_dirichlet} will be given at the end of this paragraph.

A function $g \colon \R^{m+1} \to \R$ is called a positive homogeneous on a cone $A \subseteq \R^{m+1}$ of degree $t$ if for any $\gamma > 0$ and $x \in A$ we have $g(\gamma x) = \gamma^t g(x)$. Define $\widetilde{\simplex}_m = \conv(0, \simplex_m) = \{ w \in \R_+^{m+1} : \sum_{j=0}^m w_j \leq 1\}$ as a pyramid with a base $\simplex_m$ and apex at $0$. For $r > 0$ we write $\simplex_m(r) = \{w \in \R^{m+1}_{+}: \sum_{\ell=0}^m w_\ell = r\}$. Then, clearly $\simplex_m = \simplex_m(1)$. For any $a \in \R^{m+1}$ define $\Hset_a = \{ w \in \R^{m+1} : \langle a, w \rangle = 0\}$. Also, for any matrix $M \in \R^{m+1 \times k}$ define $\Hset_M = \{w \in \R^{m+1} : Mw = 0 \}$.

For any measurable set $A \subseteq \R^{m+1}$ of dimension $n < m+1$ and any function $g \colon \R^{m+1} \to \R$ define 
\[
    I_n(g, A) = \int_{A} g(w) \cH^n(\rmd w),
\]
where $\cH^n$ is an $n$-dimensional Hausdorff measure (see \citealp{evans2018measure}, for definition). If $A = \cL(Y)$ for a linear map $\cL \colon \R^n \to \R^{m+1}$ and $Y \subseteq \R^n$, then we can write
\[
    I_n(g, \cL(Y)) = [\cL] \cdot \int_{Y} g(\cL(y)) \lambda_n(\rmd y),
\]
where $\lambda_n$ is an $n$-dimensional Lebesgue measure on $Y$ and $[\cL]$ is a Jacobian of the map $\cL$ that could be computed as $[\cL] = \sqrt{\det(\cL \cL^\top)}$. Let us define an affine map $\cL^t_a \colon \R^m \to \R^{m+1}$ that transforms $\R^m$ to $\Hset_a^t$ by mapping $x_1,\ldots,x_m$  to $w_1,\ldots,w_m$ and $w_0 = \frac{t - \sum_{j=1}^m a_j x_j}{a_0}$ for $a_0 > 0$ (without loss of generality). The linear part of this map has a Jacobian that is equal to $[\cL^t_a] = \frac{\norm{a}_2}{a_0}$ (see Lemma~\ref{lem:jacob}). Additionally, define $\cL_a = \cL_a^0$. Define $\cL_{M}$ as a maps onto $\cH_M$ for a matrix $M$, for a definition see Appendix~\ref{app:linear_algebra}. For two vectors $a,b \in \R^{m+1}$ we define $\cH_{[a,b]}^{t_1,t_2} = \{ x \in \R^{m+1} : \langle a, x\rangle = t_1, \langle b,x \rangle = t_2 \}$ and a corresponding canonical map onto this set as $\cL_{[a,b]}^{t_1,t_2}$. Notice that this map depends only on the space $\mathrm{span}(a,b)$ and not vectors itself.

\begin{lemma}\label{lem:pyramid_volume}
    Let $g$ be a positively homogeneous function of degree $t > -m$ on $\R_{++}^{m+1}$ and assume that $a_0 \not = a_1$. Then we have
    \[
        I_{m}(g, \widetilde{\simplex}_{m} \cap \Hset_a) = \frac{\norm{a}_2}{ \vert a_0 - a_1 \vert \cdot [\cL_{[a,\bOne]}]} \frac{ I_{m-1}(g, \simplex_m \cap \Hset_a)}{m+t}.
    \]
\end{lemma}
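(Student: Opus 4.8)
The plan is to recognize the section $\widetilde{\simplex}_m \cap \Hset_a$ as a cone with apex at the origin over the lower-dimensional base $B := \simplex_m \cap \Hset_a$ and to integrate $g$ along rays. Indeed every $w \in \widetilde{\simplex}_m$ has the form $w = sx$ with $s = \sum_{j=0}^m w_j \in [0,1]$ and, when $s>0$, $x = w/s \in \simplex_m$; since $\Hset_a$ is a linear subspace, $\langle a, w\rangle = 0$ is equivalent to $s = 0$ or $\langle a,x\rangle = 0$, so that
\[
    \widetilde{\simplex}_m \cap \Hset_a = \{0\} \cup \{\, sx : s \in (0,1],\ x \in B \,\}.
\]
If $B$ is empty or of dimension less than $m-1$ (which happens, e.g., when all the $a_j$ have the same sign), both sides of the claimed identity vanish, so we may assume $B$ is an $(m-1)$-dimensional convex polytope, contained in the affine subspace $\{\langle\bOne,\cdot\rangle = 1\} \cap \Hset_a$, and note that $a \nparallel \bOne$ because $a_0 \ne a_1$. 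I would then slice $\widetilde{\simplex}_m \cap \Hset_a$ by the level sets of the linear functional $h(w) = \langle \bOne, w\rangle$ restricted to $\Hset_a$: its level set at height $s \in (0,1]$ is precisely $sB$, and, inside $\Hset_a$, its gradient is the constant vector $\Pi_{\Hset_a}\bOne$, where $\Pi_{\Hset_a}$ denotes orthogonal projection onto $\Hset_a$; write $\kappa := \norm{\Pi_{\Hset_a}\bOne}_2 > 0$ (positive since $\bOne \notin \Hset_a^\perp = \mathrm{span}(a)$).

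By the coarea formula applied inside $\Hset_a$, combined with the elementary scaling $\int_{sB}\phi\,\cH^{m-1}(\rmd w) = s^{m-1}\int_B\phi(sx)\,\cH^{m-1}(\rmd x)$ and the positive homogeneity $g(sx) = s^t g(x)$ (valid $\cH^{m-1}$-a.e. on $B$, whose lower-dimensional boundary faces are $\cH^{m-1}$-null), the radial variable separates:
\[
    I_m(g,\widetilde{\simplex}_m \cap \Hset_a) = \frac{1}{\kappa}\int_0^1\Big(\int_{sB} g\,\cH^{m-1}(\rmd w)\Big)\rmd s = \frac{I_{m-1}(g,B)}{\kappa}\int_0^1 s^{m+t-1}\,\rmd s = \frac{I_{m-1}(g,B)}{\kappa\,(m+t)},
\]
the radial integral being finite precisely because $t > -m$. (Equivalently, one parametrizes the section by $\Psi(s,x) = sx$ on $(0,1]\times B$ and uses the area formula: $\rmd\Psi$ maps $\partial_s$ to $x$ and restricts to multiplication by $s$ on the tangent space $T_xB = \Hset_a\cap\bOne^\perp$ of $B$, so its Jacobian equals $s^{m-1}/\kappa$, giving the same identity.)

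It then remains to check $\kappa = |a_0-a_1|\,[\cL_{[a,\bOne]}]/\norm{a}_2$. From $\Pi_{\Hset_a}\bOne = \bOne - \frac{\langle a,\bOne\rangle}{\norm{a}_2^2}\,a$ we get $\kappa^2\norm{a}_2^2 = \norm{a}_2^2\norm{\bOne}_2^2 - \langle a,\bOne\rangle^2 = \det G$, the Gram determinant of the pair $(a,\bOne)$. On the other hand, repeating the computation that gave $[\cL^t_a] = \norm{a}_2/|a_0|$ (Lemma~\ref{lem:jacob}, Appendix~\ref{app:linear_algebra}) for the two-constraint system $\langle a,\cdot\rangle = \langle\bOne,\cdot\rangle = 0$ yields $[\cL_{[a,\bOne]}] = \sqrt{\det(MM^\top)}/|\det M_B|$ with $M = \big(\begin{smallmatrix} a^\top\\ \bOne^\top\end{smallmatrix}\big)$ and $M_B = \big(\begin{smallmatrix} a_0 & a_1\\ 1 & 1\end{smallmatrix}\big)$, i.e. $[\cL_{[a,\bOne]}] = \sqrt{\det G}/|a_0-a_1|$; hence $1/\kappa = \norm{a}_2/(|a_0-a_1|\,[\cL_{[a,\bOne]}])$, and substituting into the displayed identity gives the statement. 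I expect the only genuine obstacle to be the bookkeeping of this geometric constant — pinning down $\kappa$ and matching it with the paper's normalization of $[\cL_{[a,\bOne]}]$ — while the conical decomposition and the one-dimensional radial integral are routine.
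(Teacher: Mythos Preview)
Your argument is correct and reaches the same identity as the paper, but the route differs in a meaningful way. The paper first pulls everything down to $\R^m$ via the explicit parametrization $\cL_a$ of $\Hset_a$, applies the coarea formula there with respect to the linear functional $x\mapsto\bOne^\top\cL_a(x)$, and then parametrizes each level set by $\cL^t_{\cL_a^\top\bOne}$; this produces a chain of Jacobians $[\cL_a],\,[\cL_a^\top\bOne],\,[\cL_{\cL_a^\top\bOne}],\,[\cL_{[a,\bOne]}]$ that are simplified at the end using Lemma~\ref{lem:jacob} and Lemma~\ref{lem:maps_product}. You instead work intrinsically inside $\Hset_a$ and apply coarea directly there, so that the only geometric constant that appears is $\kappa=\norm{\Pi_{\Hset_a}\bOne}_2$; this is cleaner and bypasses the composition-of-parametrizations machinery entirely. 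The trade-off is that you must then identify $1/\kappa$ with $\norm{a}_2/(\lvert a_0-a_1\rvert\,[\cL_{[a,\bOne]}])$, which requires computing $[\cL_{[a,\bOne]}]=\sqrt{\det G}/\lvert a_0-a_1\rvert$ from scratch (via the reduced-row-echelon description of $\cL_M$ and Sylvester's determinant identity), a step the paper never needs because it leaves $[\cL_{[a,\bOne]}]$ symbolic. Both approaches ultimately hinge on the same two ingredients---slicing by the height $\langle\bOne,\cdot\rangle$ and exploiting homogeneity to factor out $\int_0^1 s^{m+t-1}\,\rmd s$---so the difference is one of execution rather than of underlying idea.
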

\begin{proof}
    First, we apply the change of variables formula \citep[3.3.3]{evans2018measure} by using a map $\cL_a$
    \[
        I_m(g, \widetilde{\simplex}_m \cap \Hset_a) = [\cL_a] \int_{\cL_a(x) \geq 0,\ \bOne^\top \cL_a(x) \leq 1} g(\cL_a(x)) \lambda^m(\rmd x).
    \]
    We can define a map that acts as a scalar product with $\cL_a^\top \bOne$ and apply the coarea formula \citep[3.4.3]{evans2018measure}
    \[
        I_m(g, \widetilde{\simplex}_m \cap \Hset_a) = \frac{[\cL_a]}{[\cL_a^\top \bOne]}  \int_{0}^1 \rmd t  \int_{\cL_a(x) \geq 0,\ \bOne^\top \cL_a(x) = t} g(\cL_a(x)) \cH^{m-1}(\rmd x).
    \]
    Next we have to compute the inner integral. To do it, we define a map $\cL^t_{\cL_a^\top \bOne}$ that maps $\R^m$ to a set $\{ x: \bOne^\top \cL_a(x) = t \}$. Notice that the Jacobian of this map does not depend on the parameter $t$. Therefore by the change of variables formula we have
    \begin{align*}
        \int_{\cL_a(x) \geq 0,\ \bOne^\top \cL_a(x) = t} g(\cL_a(x)) \cH^{m-1}(\rmd x) &= [\cL_{\cL_a^\top \bOne}] \int_{\cL_{[a, \bOne]}^{0,t}(y) \geq 0} g(\cL_{[a, \bOne]}^{0,t}(y)) \lambda^{m-1}(\rmd y) \\
        &= \frac{[\cL_{\cL_a^\top \bOne}]} {[\cL_{[a,\bOne]}]} I_{m-1}(g, \simplex_m(t) \cap \Hset_a).
    \end{align*}
    where we used Lemma~\ref{lem:maps_product} to simplify the map $\cL_a \circ \cL^t_{\cL_a \bOne} = \cL_{[a,\bOne]}^{0,t}$. Using definition of a positive homogeneous function and properties of the Hausdorff measure $\cH^{m-1},$ we derive
    \begin{align*}
        I_m(g, \widetilde{\simplex}_m \cap \Hset_a) &= \frac{[\cL_a][\cL_{\cL_a^\top \bOne}] }{[\cL_{[a,\bOne]}] [\cL_a^\top \bOne] } \int_0^1 I_{m-1}(g, \simplex_m(t) \cap \Hset_a) \rmd t \\
        &= \frac{[\cL_a][\cL_{\cL_a^\top \bOne}]}{[\cL_{[a,\bOne]}] [\cL_a^\top \bOne]} \frac{ I_{m-1}(g, \simplex_m \cap \Hset_a)}{m+t}.
    \end{align*}
    Next we simplify the expression by exact formulas for Jacobian of maps to hyperplanes (Lemma~\ref{lem:jacob})
    \[
        \frac{[\cL_a][\cL_{\cL_a^\top \bOne}]}{[\cL_a^\top \bOne]} = \frac{\norm{a}_2 \norm{\cL_a^\top \bOne}_2}{\vert a_0 \vert  \cdot \vert (\cL_a^\top \bOne)_{0} \vert \cdot \norm{\cL_a^\top \bOne}_2} = \frac{\norm{a}_2}{ \vert (\cL_a^\top \bOne)_0 \vert  \cdot \vert a_0 \vert }
    \]
    where without loss of generality we assume that $(\cL_a^\top \bOne)_{0} \not = 0$ and $a_0 \not = 0$. To finally simplify the expression, it is enough to notice that $(\cL_a^\top \bOne)_0 = (a_0 - a_1)/a_0$.
\end{proof}

Next we provide another representation of the integral $I_{m}(g, \widetilde{\simplex}_m\cap \Hset_a)$. We follow \citet{lasserre2020simple} and use the same technique based on the Laplace transform.
\begin{lemma}\label{lem:simplex_laplace}
    Let $g$ be a positively homogeneous function of degree $t$ on $\R_{++}^{m+1}$ such that $t > -(1+m)$ and $\int_{\widetilde{\simplex}_m} \vert g(w) \vert \rmd w < \infty$. Then
    \[
        I_{m}(g, \widetilde{\simplex}_m \cap \Hset_a) = \frac{1}{\Gamma(1 + m + t)} \int_{\R_+^{m+1} \cap \Hset_a} g(w) \exp\left(-\sum_{\ell=0}^m w_\ell\right) \cH^m(\rmd w).
    \]
\end{lemma}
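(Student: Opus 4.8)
The plan is to prove Lemma~\ref{lem:simplex_laplace} by relating the integral over the pyramid $\widetilde{\simplex}_m \cap \Hset_a$ to an integral over the full positive orthant slice $\R_+^{m+1} \cap \Hset_a$ via a radial (polar-type) decomposition, exploiting positive homogeneity of $g$. First I would observe that $\Hset_a$ is a linear subspace (since $\langle a, w\rangle = 0$ passes through the origin), so it is itself a cone, and the slices $\simplex_m(r) \cap \Hset_a$ for $r > 0$ foliate $\R_+^{m+1} \cap \Hset_a$ as well as $\widetilde{\simplex}_m \cap \Hset_a$ (the latter being the part with $r \le 1$). Using the coarea formula with the map $w \mapsto \bOne^\top w = \sum_\ell w_\ell$ restricted to $\Hset_a$ (whose Jacobian factor is constant along each slice), one gets for any nonnegative weight $h(r)$
\[
    \int_{\R_+^{m+1}\cap \Hset_a} h\!\left(\textstyle\sum_\ell w_\ell\right) g(w)\, \cH^m(\rmd w) = C \int_0^\infty h(r) \left( \int_{\simplex_m(r)\cap \Hset_a} g(w)\, \cH^{m-1}(\rmd w)\right) \rmd r,
\]
where $C$ absorbs the constant Jacobian of the slicing map.

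Next, by positive homogeneity of degree $t$, scaling $\simplex_m(r)\cap\Hset_a = r\cdot(\simplex_m \cap \Hset_a)$ gives $g(w) = r^t g(w/r)$ and the $(m-1)$-dimensional Hausdorff measure scales by $r^{m-1}$, so the inner integral equals $r^{t+m-1} I_{m-1}(g, \simplex_m \cap \Hset_a)$. Plugging this in, the identity becomes
\[
    \int_{\R_+^{m+1}\cap \Hset_a} h\!\left(\textstyle\sum_\ell w_\ell\right) g(w)\, \cH^m(\rmd w) = C\, I_{m-1}(g, \simplex_m \cap \Hset_a) \int_0^\infty h(r)\, r^{t+m-1}\, \rmd r.
\]
Now I would specialize twice: taking $h = \ind_{[0,1]}$ yields $I_m(g, \widetilde{\simplex}_m \cap \Hset_a) = C\, I_{m-1}(g,\simplex_m \cap \Hset_a)\cdot \frac{1}{t+m}$, exactly as in Lemma~\ref{lem:pyramid_volume} (this also identifies the constant $C$ in terms of the Jacobians there, so the two lemmas are consistent); taking $h(r) = e^{-r}$ yields
\[
    \int_{\R_+^{m+1}\cap \Hset_a} g(w) e^{-\sum_\ell w_\ell}\, \cH^m(\rmd w) = C\, I_{m-1}(g, \simplex_m\cap \Hset_a) \cdot \Gamma(t+m),
\]
using $\int_0^\infty e^{-r} r^{t+m-1}\rmd r = \Gamma(t+m)$, valid since $t + m > 1$ (indeed $t > -(1+m)$ is a bit more than needed, but the integrability hypothesis $\int_{\widetilde{\simplex}_m}|g| < \infty$ together with homogeneity forces $t > -(1+m)$, and for the $\Gamma$ integral at infinity any $t+m>0$ suffices while near $0$ we need $t+m>0$ as well, matching the stated range once one is careful). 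Dividing the second identity by $(t+m)$ times the first, or more directly eliminating $C\, I_{m-1}$, gives
\[
    I_m(g, \widetilde{\simplex}_m \cap \Hset_a) = \frac{1}{\Gamma(t+m)\cdot(t+m)} \int_{\R_+^{m+1}\cap\Hset_a} g(w) e^{-\sum_\ell w_\ell}\,\cH^m(\rmd w) = \frac{1}{\Gamma(1+m+t)}\int_{\R_+^{m+1}\cap\Hset_a} g(w) e^{-\sum_\ell w_\ell}\,\cH^m(\rmd w),
\]
using $\Gamma(1+m+t) = (m+t)\Gamma(m+t)$, which is the claim.

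The main obstacle I anticipate is the careful bookkeeping of Hausdorff-measure normalizations and Jacobian constants when applying the coarea formula on the subspace $\Hset_a$ — specifically verifying that the ``slicing'' map $w\mapsto \sum_\ell w_\ell$ restricted to $\Hset_a$ has a Jacobian that is genuinely constant (independent of the slice), so that it factors out cleanly, and checking that the scaling relation $\cH^{m-1}(r\cdot S) = r^{m-1}\cH^{m-1}(S)$ is applied to the correct-dimensional object. One clean way to sidestep tracking the precise value of $C$ is to note that \emph{both} sides of the desired identity are of the form ``constant $\times\, I_{m-1}(g,\simplex_m\cap\Hset_a)$'' with the \emph{same} constant $C$ (the one appearing in Lemma~\ref{lem:pyramid_volume}), so only the ratio of the two $h$-integrals matters and the geometric constant cancels entirely; this is essentially the Laplace-transform trick of \citet{lasserre2020simple}. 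The integrability/convergence conditions ($t > -(1+m)$, finiteness of $\int_{\widetilde{\simplex}_m}|g|$) are exactly what is needed to make both $\int_0^1 r^{t+m-1}\rmd r$ and $\int_0^\infty e^{-r}r^{t+m-1}\rmd r$ finite and to justify Fubini/Tonelli throughout, so I would state these checks explicitly but not belabor them.
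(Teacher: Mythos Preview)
Your proof is correct and close in spirit to the paper's, but the mechanics differ. The paper never slices down to $(m-1)$-dimensional sets; instead it defines $h(y) = I_m(g,\{w\in\R_+^{m+1}:\langle\bOne,w\rangle\le y\}\cap\Hset_a)$, observes $h(y)=y^{m+t}h(1)$ by homogeneity, and then computes the Laplace transform $L(\lambda)=\int_0^\infty h(y)e^{-\lambda y}\,\rmd y$ in two ways: once from the explicit power law (giving $h(1)\Gamma(m+t+1)/\lambda^{m+t+1}$), and once by Fubini, swapping the $y$- and $w$-integrals and using homogeneity of $g\circ\cL_a$ to extract the $\lambda$-dependence. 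Equating the two expressions gives the result directly, with no Jacobian constant to track. Your route goes via coarea to the slices $\simplex_m(r)\cap\Hset_a$, introducing $I_{m-1}(g,\simplex_m\cap\Hset_a)$ and the constant $C$ as intermediates that then cancel when you take the ratio of the two weight choices. What your approach buys is a very explicit radial picture (and, as you note, it reproduces Lemma~\ref{lem:pyramid_volume} along the way); what the paper's approach buys is that it stays at the $m$-dimensional level throughout, avoids the bookkeeping of $C$ entirely, and only needs the scaling $h(y)=y^{m+t}h(1)$ rather than the finer slicewise scaling $r^{t+m-1}$. One small cleanup: your parenthetical about the range of $t$ is slightly garbled (you write ``$t+m>1$'' where ``$t+m>0$'' is meant); for your slicing argument you genuinely need $t+m>0$ so that $\int_0^1 r^{t+m-1}\,\rmd r$ converges, which is marginally stronger than the stated $t>-(m+1)$ but matches the hypothesis $t>-m$ of Lemma~\ref{lem:pyramid_volume} and is all that is used downstream.
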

\begin{proof}
    Consider $h(y) = \int_{w \geq 0, \langle \bOne, w \rangle \leq y, \langle a, w \rangle =0} g(w) \cH^m(\rmd w)$. Clearly, $h(1) = I_{m}(g, \widetilde{\simplex}_m \cap \Hset_a)$. Since $g$ is positively homogeneous function we get $h(y) = y^{m+t} h(1)$. This implies that the Laplace transform of $h$ is equal to $L(\lambda) = \int_0^\infty h(y) \rme^{-\lambda y} \rmd y = h(1) \frac{\Gamma(m+t+1)}{\lambda^{m + t + 1}}$. On the other hide, the Laplace transform $L(\lambda)$ may be calculated via a linear parametrization of the subspace $\langle a,w \rangle = 0$ using the map $\cL_a$ and the Fubini theorem
    \begin{align*}
        L(\lambda) &= \int_0^\infty \rme^{-\lambda y} \left[ \int_{w \in \R^{m+1}_+, \langle \bOne,w \rangle \leq y, \langle a,w \rangle =0 }  g(w) \cH^m(\rmd w) \right] \rmd y\\
        &= [\cL_a]\int_{\cL_a(x) \geq 0} \rmd x \cdot g(\cL_a(x)) \cdot \left[ \int_{\langle \bOne, \cL_a(x) \rangle \le y}  \rme^{-\lambda y} \rmd y \right] \\
        &= \frac{[\cL_a]}{\lambda} \int_{\cL_a(x) \geq 0} g(\cL_a(x)) \exp\left( -\lambda \langle \bOne, \cL_a(x) \rangle \right) \rmd x \\
        &= \frac{[\cL_a]}{\lambda^{m+t+1}} \int_{\cL_a(x) \geq 0} g(\cL_a(x)) \exp\left( -\langle \bOne, \cL_a(x) \rangle \right) \rmd x \\
        &= \frac{1}{\lambda^{m+t+1}} \int_{\R^{m+1}_+ \cap \Hset_a} g(w) \exp\left( -\sum_{\ell=0}^m w_\ell  \right) \cH^m(\rmd w).
    \end{align*}
    Identifying two ways of computation of the Laplace transform, we finish the proof.
\end{proof}

We now compute the integral in the r.h.s.\,of Lemma~\ref{lem:simplex_laplace}. We shall use the Fourier transform method and follow the approach of \citet{dirksen2015sections}. 
\begin{lemma}\label{lm:simplex_fourier_integral}
    Let \(g(w) = w_0^{\alpha_0 -1} \cdot \ldots \cdot w_{m}^{\alpha_m - 1}\). Then we have
    \[
        \int_{\R_+^{m+1} \cap \Hset_a} g(w) \exp\left(-\sum_{i=0}^m w_i\right) \cH^m(\rmd w) = \frac{\norm{a}_2  \cdot \prod_{j=0}^m \Gamma(\alpha_j)}{2\pi} \int_{\R} \prod_{j=0}^m (1 + \rmi a_j \tau)^{-\alpha_j} \rmd \tau.
    \]
\end{lemma}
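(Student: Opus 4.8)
The plan is to recognize the left-hand side as a convolution of one-dimensional Gamma-type densities restricted to the hyperplane $\Hset_a = \{w : \langle a, w\rangle = 0\}$, and to evaluate it via the Fourier inversion formula on that hyperplane. Concretely, introduce for each $j$ the function $\psi_j(t) = t^{\alpha_j - 1} e^{-t} \ind\{t > 0\}$, whose (one-sided) Fourier transform is $\int_0^\infty t^{\alpha_j-1} e^{-t} e^{-\rmi \xi t}\,\rmd t = \Gamma(\alpha_j)(1 + \rmi \xi)^{-\alpha_j}$. The integrand $g(w)\exp(-\sum_\ell w_\ell) = \prod_{j=0}^m \psi_j(w_j)$ is a product, so its integral over the codimension-one subspace $\Hset_a$ should, by the slicing/Fourier-inversion heuristic, equal a single scalar integral against $\prod_j \widehat{\psi_j}(a_j \tau)$; the factor $\norm{a}_2$ is exactly the Jacobian correction $[\cL_a]$ relating Hausdorff measure on $\Hset_a$ to Lebesgue measure on its linear parametrization, analogous to the $\norm{a}_2/a_0$ factors appearing in Lemma~\ref{lem:jacob} and Lemma~\ref{lem:simplex_laplace}.

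The key steps, in order: (1) Parametrize $\Hset_a$ linearly via $\cL_a$ and rewrite $I := \int_{\R_+^{m+1}\cap\Hset_a} \prod_j \psi_j(w_j)\,\cH^m(\rmd w)$ as $[\cL_a]$ times a Lebesgue integral over the parametrizing $\R^m$ — or, more symmetrically, write the constraint $\langle a, w\rangle = 0$ using a Dirac delta and its Fourier representation $\delta(\langle a,w\rangle) = \frac{1}{2\pi}\int_\R e^{\rmi \tau \langle a,w\rangle}\,\rmd\tau$. (2) With the delta representation, $I = \frac{\norm{a}_2}{2\pi}\int_{\R_+^{m+1}} \prod_{j=0}^m \psi_j(w_j) \int_\R e^{\rmi\tau\langle a,w\rangle}\,\rmd\tau\,\rmd w$, where the $\norm{a}_2$ accounts for the relation between $\delta(\langle a,w\rangle)\,\rmd w$ and the surface measure $\cH^m$ on $\Hset_a$. (3) Apply Fubini to swap the $\tau$-integral with the $w$-integral (justified by absolute integrability, using $\alpha_j > 0$ so each $\psi_j \in L^1$), obtaining $\frac{\norm{a}_2}{2\pi}\int_\R \prod_{j=0}^m\Bigl(\int_0^\infty w_j^{\alpha_j-1} e^{-w_j} e^{\rmi\tau a_j w_j}\,\rmd w_j\Bigr)\rmd\tau$. (4) Evaluate each inner integral as $\Gamma(\alpha_j)(1 - \rmi a_j \tau)^{-\alpha_j}$; substituting $\tau \mapsto -\tau$ (the outer integral is over all of $\R$) turns this into $\Gamma(\alpha_j)(1 + \rmi a_j \tau)^{-\alpha_j}$, which gives exactly the claimed right-hand side.

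The main obstacle is making the Dirac-delta / Fourier-inversion argument rigorous, i.e., justifying the passage from the formal identity $\cH^m|_{\Hset_a} = \norm{a}_2\,\delta(\langle a,\cdot\rangle)\,\lambda^{m+1}$ and the interchange of the (non-absolutely-convergent in $\tau$) oscillatory integral with the $w$-integration. I would handle this either by the clean route of step (1) — parametrizing $\Hset_a$ honestly with $\cL_a$, whose Jacobian is $\norm{a}_2/|a_0|$ by Lemma~\ref{lem:jacob}, then computing the resulting genuine $\R^m$ integral and identifying it with the stated $\tau$-integral by a change of variables that restores the symmetry among all $m+1$ coordinates — or, if one prefers the slicing approach, by inserting a Gaussian regularizer $e^{-\varepsilon\tau^2}$, applying Fubini legitimately, and passing to the limit $\varepsilon \to 0^+$ using dominated convergence (the product $\prod_j(1+\rmi a_j\tau)^{-\alpha_j}$ decays like $|\tau|^{-\ualpha}$ with $\ualpha = \sum_j \alpha_j > 1$, which secures integrability of the limiting integrand). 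The bookkeeping of the $\norm{a}_2$ constant is the one place to be careful, but it is pinned down by consistency with Lemma~\ref{lem:simplex_laplace} and the Jacobian formulas already established.
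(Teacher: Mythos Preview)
Your proposal is correct and follows essentially the same route as the paper: the paper defines the slice function $G(t)=\int_{\{ \langle a,w\rangle=t,\ w\geq 0\}} g(w)\exp(-\sum_\ell w_\ell)\,\cH^m(\rmd w)$, computes its Fourier transform by moving $e^{-\rmi t\tau}$ inside and applying the coarea formula (producing the factor $[a]=\norm{a}_2$), recognizes the result as the product $\prod_j \Gamma(\alpha_j)(1+\rmi a_j\tau)^{-\alpha_j}$, and recovers $G(0)$ by inverse Fourier transform. Your Dirac-delta/slicing formulation with the $\norm{a}_2$ Jacobian and the Fubini swap is exactly this argument in equivalent language; the Gaussian regularizer you mention is a standard way to justify the Fourier inversion step that the paper leaves implicit.
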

\begin{proof}
    Denote for any $t \in \R$
    \[
        G(t) = \int_{\forall j: \langle a, w\rangle = t, w_j \geq 0} g(w) \exp\left(-\sum_{\ell=0}^m w_\ell\right) \cH^m (\rmd w).
    \]
    Next we write down the Fourier transform of $G$ for $\tau \in \R$
    \begin{align*}
        \mathcal{F}[G](\tau) &= \frac{1}{\sqrt{2\pi}} \int_{\R} \rme^{-\rmi t \tau  } \rmd t \int_{\langle a,w\rangle=t, w \geq 0} g(w) \exp\left(-\sum_{\ell=0}^m w_\ell\right) \cH^{m} (\rmd w).
    \end{align*}
    Let us move $\rme^{-\rmi t \tau}$ under the sign of the second integral and replace $t$ with $\langle a, w\rangle$. Notice that it is correct since the functions $g(w) \exp\left(-\sum_{\ell=0}^m w_\ell\right) \rme^{-\rmi t \tau}$ and $g(w) \exp\left(-\sum_{\ell=0}^m w_\ell\right) \rme^{-i \tau  \langle a, w\rangle}$ are almost surely equal to each other on the set $\langle a, w\rangle =t$. Thus, we have
    \[
        \mathcal{F}[G](\tau) = \frac{1}{\sqrt{2\pi}} \int_{\R} \rmd t \left[ \int_{\langle a, w\rangle =t} g(w) \ind\{w \geq 0\} \exp\left(-\langle w, \bOne + \rmi a \tau \rangle\right) \cH^{m} (\rmd w) \right]
    \]
    Now we may apply a coarea formula for the map defined by a scalar product with $a$. We have
    \[
        \mathcal{F}[G](\tau) = \frac{[a]}{\sqrt{2\pi}} \int_{\R^{m+1}} g(w) \ind\{w \geq 0\} \exp\left(-\langle w, \bOne + \rmi a \tau \rangle\right) \lambda^{m+1} (\rmd w).
    \]
    We arrive at the product of $m$ characteristic functions of independent $\Gamma(\alpha_\ell, 1)$-distributed random variables
    \[
         \mathcal{F}[G](\tau) = \frac{[a]}{\sqrt{2\pi}} \cdot \prod_{j=0}^m \Gamma(\alpha_j) \cdot \frac{1}{(1 + \rmi a_j \tau )^{\alpha_j}},
    \]
    where $A^{(j)}$ is $j$-th column of matrix $A$. Finally, by the inverse Fourier transform and noticing that $[a] = \norm{a}_2$
    \[
        G(0) = \frac{\norm{a}_2}{\sqrt{2\pi}} \int_{\R}   \mathcal{F}[G](\tau)  \rmd\tau = \frac{\norm{a}_2 \cdot \prod_{j=0}^m \Gamma(\alpha_j)}{2\pi} \int_{\R} \prod_{j=0}^m (1 + \rmi a_j \tau )^{-\alpha_j} \rmd\tau.
    \]
\end{proof}
\begin{remark}
    Notice that the value of the integral is the right-hand side is real because the function under integral has even real part and odd imaginary one.
\end{remark}

\begin{corollary}\label{cor:simplex_integral_subspace_zero}
   Let $g(w) = \Gamma(\ualpha) \prod_{j=0}^m w_j^{\alpha_j - 1}/\Gamma(\alpha_j)$, where $\ualpha > 1$ and $a \in \R^{m+1}$ such that $a_0 \not = a_1$. Then
    \[
        I_{m-1}(g, \simplex_m \cap \Hset_a) = (\ualpha - 1) \cdot [\cL_{[a,\bOne]}] \cdot \vert a_0 - a_1 \vert \cdot \frac{1}{2\pi}\int_{\R} \prod_{j=0}^m (1 + \rmi a_j s)^{-\alpha_j} \rmd s.
    \]
\end{corollary}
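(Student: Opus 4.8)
The plan is to evaluate $I_m(g,\widetilde{\simplex}_{m}\cap\Hset_a)$ in two different ways and match them. Write $g_0(w) = w_0^{\alpha_0-1}\cdots w_m^{\alpha_m-1}$, so that $g = \bigl(\Gamma(\ualpha)/\prod_{j=0}^m\Gamma(\alpha_j)\bigr)\,g_0$ is positively homogeneous on $\R_{++}^{m+1}$ of degree $t=\sum_{j=0}^m(\alpha_j-1)=\ualpha-(m+1)$; the key arithmetic is that $m+t=\ualpha-1$ and $1+m+t=\ualpha$. Since $\ualpha>1$ we have $t>-m$ and $t>-(1+m)$, and $\int_{\widetilde{\simplex}_m}|g(w)|\,\rmd w<\infty$ is a convergent Dirichlet integral, so the hypotheses of Lemma~\ref{lem:pyramid_volume} (which also uses $a_0\neq a_1$), Lemma~\ref{lem:simplex_laplace}, and Lemma~\ref{lm:simplex_fourier_integral} (the latter applied to the monomial $g_0$) are all satisfied.

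First I would invoke Lemma~\ref{lem:pyramid_volume}, which with $m+t=\ualpha-1$ reads
\[
I_m(g,\widetilde{\simplex}_{m}\cap\Hset_a) = \frac{\norm{a}_2}{\vert a_0-a_1\vert\cdot[\cL_{[a,\bOne]}]}\cdot\frac{I_{m-1}(g,\simplex_m\cap\Hset_a)}{\ualpha-1}.
\]
Second, I would apply Lemma~\ref{lem:simplex_laplace} with $\Gamma(1+m+t)=\Gamma(\ualpha)$, substitute $g=\bigl(\Gamma(\ualpha)/\prod_j\Gamma(\alpha_j)\bigr)g_0$ into the resulting exponential integral over $\R_+^{m+1}\cap\Hset_a$, and evaluate that integral by Lemma~\ref{lm:simplex_fourier_integral}. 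The three gamma factors $1/\Gamma(\ualpha)$, $\Gamma(\ualpha)/\prod_j\Gamma(\alpha_j)$ and $\prod_j\Gamma(\alpha_j)$ cancel, leaving
\[
I_m(g,\widetilde{\simplex}_{m}\cap\Hset_a) = \frac{\norm{a}_2}{2\pi}\int_\R\prod_{j=0}^m(1+\rmi a_j s)^{-\alpha_j}\,\rmd s.
\]

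Equating the two expressions for $I_m(g,\widetilde{\simplex}_{m}\cap\Hset_a)$ and cancelling the common factor $\norm{a}_2$ gives exactly
\[
I_{m-1}(g,\simplex_m\cap\Hset_a) = (\ualpha-1)\cdot\vert a_0-a_1\vert\cdot[\cL_{[a,\bOne]}]\cdot\frac{1}{2\pi}\int_\R\prod_{j=0}^m(1+\rmi a_j s)^{-\alpha_j}\,\rmd s,
\]
which is the claimed identity. I do not expect a genuine obstacle: all the analytic work has already been done in the three cited lemmas, and what remains is bookkeeping---computing the homogeneity degree $t$ so that $m+t=\ualpha-1$ and $1+m+t=\ualpha$ (this is precisely what makes the $\Gamma$-factors telescope), and checking the mild integrability and genericity hypotheses ($\ualpha>1$, $a_0\neq a_1$) for this particular $g$ and $a$. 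As in the remark after Lemma~\ref{lm:simplex_fourier_integral}, one may also note in passing that the integral on the right-hand side is real, since the integrand has even real part and odd imaginary part.
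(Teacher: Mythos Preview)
Your proposal is correct and follows essentially the same approach as the paper: apply Lemma~\ref{lem:pyramid_volume} and Lemma~\ref{lem:simplex_laplace} to relate $I_{m-1}(g,\simplex_m\cap\Hset_a)$ to the exponential integral over $\R_+^{m+1}\cap\Hset_a$, then evaluate the latter via Lemma~\ref{lm:simplex_fourier_integral}, with the gamma factors telescoping because $m+t=\ualpha-1$ and $1+m+t=\ualpha$. Your framing as ``compute $I_m(g,\widetilde{\simplex}_m\cap\Hset_a)$ two ways and equate'' is equivalent to the paper's direct substitution, and your explicit verification of the hypotheses ($t>-m$, integrability, $a_0\neq a_1$) is a welcome addition.
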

\begin{proof}
    Notice that $g$ is positively homogeneous function of degree $\ualpha - (m+1) > -m$ on $\R^{m+1}_{++}$. Hence,  we may apply Lemma \ref{lem:pyramid_volume} and Lemma \ref{lem:simplex_laplace}. We obtain
    \begin{align*}
         I_{m-1}(g, \simplex_m \cap \Hset_a) &= \frac{(\ualpha - 1) \cdot [\cL_{[a,\bOne]}] \cdot \vert a_0 - a_1 \vert}{\norm{a}_2} \cdot I_{m}(g, \widetilde{\simplex}_m \cap \Hset_a) \\
         &= \frac{(\ualpha - 1) \cdot [\cL_{[a,\bOne]}] \cdot \vert a_0 - a_1 \vert }{\norm{a}_2 \cdot \Gamma(\ualpha)} \int_{\R_+^{m+1} \cap \Hset_a} g(w) \exp\left(-\sum_{\ell=0}^m w_\ell\right) \cH^m(\rmd w).
    \end{align*}
    The last integral could be computed by Lemma~\ref{lm:simplex_fourier_integral}. We have
    $$
        I_{m-1}(g, \simplex_m \cap \Hset_a) = (\ualpha - 1) \cdot [\cL_{[a,\bOne]}] \cdot \vert a_0 - a_1 \vert \cdot \frac{1}{2\pi}\int_{\R} \prod_{j=0}^m (1 + \rmi a_j s)^{-\alpha_j} \rmd s.
    $$
\end{proof}

Now we are ready to prove Proposition~\ref{prop:density_linear_dirichlet}.
\begin{proof}[Proof of Proposition~\ref{prop:density_linear_dirichlet}.]
    First, we give a formula for $p_Z$ in terms of $I_{m-1}$. We start from rewriting the probability in terms of a usual integral
    \[
        \P_{w \sim \Dir(\alpha)}[wf \leq \mu] = \int_{w \geq 0, \sum_{i=1}^m w_i \leq 1, wf \leq \mu} g\left(1-\sum_{i=1}^m w_i, w_1,\ldots,w_m\right) \rmd w_1,\ldots, \rmd w_m
     \]
     where $g(w) = \Gamma(\ualpha) \prod_{j=0}^m w_j^{\alpha_j - 1}/\Gamma(\alpha_j)$ is the density of the Dirichlet distribution. We note that this transform exactly defines a map $\cL_\bOne^1$. Then we apply changing of variables formula  \citep[3.4.3]{evans2018measure} using as a map a scalar product with a vector $c = (\cL_{\bOne}^1)^\top f$
    \begin{align*}
        \P_{w \sim \Dir(\alpha)}[wf \leq \mu] = \frac{1}{[c]} \int_0^\mu \left[ \int_{\cL_{\bOne}^1(x) \geq 0, f^\top \cL_{\bOne}^1(w) = u} g(\cL_{\bOne}^1(x)) \cH^m(\rmd x) \right] \rmd u
    \end{align*}
    Then we apply changing of variables formula \citep[3.3.3]{evans2018measure} to the inner integral using parametrization through map $\cL^u_{c}$
    \begin{align*}
        \P_{w \sim \Dir(\alpha)}[wf \leq \mu]  &= \frac{1}{[c]} \int_0^\mu \left[ [\cL_{c}^u] \int_{\cL_{\bOne}^1(\cL_{c}^u w) \geq 0} g(\cL_{\bOne}^1(\cL_{c}^u(z))) \rmd z\right] \rmd u
    \end{align*}
    We note that a Jacobian of $\cL_c^u$ does not depend on the shift parameter $u$, therefore it could be moved from the integral sign. Next we apply changing of variables formula for a map $\cL_{\bOne}^1 \circ \cL_c^u$
    
    \begin{align*}
        \P_{w \sim \Dir(\alpha)}[wf \leq \mu]   &= \frac{[\cL_{c}]}{[c][\cL_{\bOne} \circ \cL_c]} \int_0^\mu \left[ \int_{\simplex_m, wf = u} g(w) \cH^{m-1}(\rmd w)\right] \rmd u.
    \end{align*}
    By the expression of $c = \cL_1^\top f$ we can apply Lemma~\ref{lem:maps_product}. As a result, $p_Z$ can be represented as  the following integral
    \begin{align*}
        p_Z(u) &= \frac{[\cL_{\cL_\bOne^\top f}]}{[\cL_\bOne^\top f][\cL_{[\bOne, f]}]} I_{m-1}(g, \simplex_m \cap \Hset^u_{f}),
    \end{align*}
    where  $\Hset^u_f = \{ w \in \R^{m+1} : wf  = u \}$. Unfortunately, we cannot apply the previous result directly because the hyperplane $\Hset^u_f$ does not intersect $0$ in general. To overcome this issue, define the following vector $a(u)_j = f(j) - u$.
    Note that $\langle w, a(u) \rangle = 0$ iff $\langle w, f - u \bOne \rangle = wf - u = 0$, where we used $\langle w ,\bOne\rangle = 1$. Hence $\Hset^u_f \cap \simplex_m = \Hset_{a(u)} \cap \simplex_m$. We can apply Corollary~\ref{cor:simplex_integral_subspace_zero} to the subspace $\Hset_{a(u)}$
    \begin{align*}
        I_{m-1}(g, \simplex_m \cap \Hset^u_{f}) &= \frac{\ualpha - 1}{2\pi} [\cL_{[a(u),\bOne]}] \cdot \vert a_0 - a_1 \vert \cdot \int_{\R} \prod_{j=0}^m (1 + \rmi a(u)_j s)^{-\alpha_j} \rmd s \\
        &=\frac{\ualpha - 1}{2\pi} [\cL_{[f,\bOne]}] \cdot \vert a_0 - a_1 \vert \cdot \int_{\R} \prod_{j=0}^m (1 + \rmi (f(j) - u)s)^{-\alpha_j} \rmd s,
    \end{align*}
    where we used that the matrices $[a(u), \bOne]$ and $[f,\bOne]$ are equivalent by an elementary transformations. Overall, we have the following expression for the density
    \[
        p_Z(u) = \frac{[\cL_{\cL_\bOne^\top f}] \vert f(0) - f(1) \vert }{[\cL_\bOne^\top f] }  \cdot \frac{\ualpha - 1}{2\pi} \int_{\R} \prod_{j=0}^m (1 + \rmi (f(j) - u)s)^{-\alpha_j} \rmd s.
    \]
    Finally, by Lemma~\ref{lem:jacob} we have
    \[
        p_Z(u) = \frac{\vert f(0) - f(1) \vert}{\vert (\cL^\top_{\bOne} f)_0\vert}  \cdot \frac{\ualpha - 1}{2\pi} \int_{\R} \prod_{j=0}^m (1 + \rmi (f(j) - u)s)^{-\alpha_j} \rmd s.
    \]
    By a direct computation we have $\vert (\cL^\top_{\bOne} f)_0\vert = \vert f(0) - f(1) \vert$ and we conclude the statement.
\end{proof}

\subsection{Method of Saddle Point}
\label{sec:saddle-point}
We start from the asymptotic decomposition of the integral that appears in the expression for density of weighted sum of Dirichlet distribution.

\begin{proposition}\label{prop:asymptotics_integral}
     Let $f \in \Fclass_m(\ub)$ and let  $\alpha = (\alpha_0, \alpha_1, \ldots, \alpha_m) \in \R_{>0}^{m+1}$ be a fixed positive vector with $\alphamin \triangleq \min\{\alpha_0, \alpha_m\} \geq 2$, $\ualpha = \sum_{i=0}^m \alpha_i$. Then for any $u \in (\up f, \ub)$ and any fixed $\ell \in \{0,\ldots,m\}$
    \begin{align*}
        \int_\R \frac{\prod_{j=0}^m \left(1 + \rmi(f(j) - u)s \right)^{-\alpha_j}}{\left(1 + \rmi(f(\ell) - u)s \right)} \rmd s &= \sqrt{\frac{2\pi}{\ualpha \, (1 - \lambda^\star(f(\ell) - u))^2\, \sigma^2 }} \cdot \exp\left(-\ualpha \,\Kinf(\up, u, f)\right) \\
        & + \left( R_2( \alpha) - R_1(\alpha) \right) \exp\left(-\ualpha \,\Kinf(\up, u, f)\right) + R_3(\alpha),
    \end{align*}
    where   
    \begin{align*}
        \sigma^2 &= \E_{X \sim \up}\left [\left(\frac{f(X) - u}{1 - \lambda^\star(f(X) - u)}\right)^2\right], 
                \\
        \vert R_1(\alpha) \vert &\leq \frac{c_1}{(1 - \lambda^\star(f(\ell) - u))\sqrt{\ualpha\,\sigma^2}} \cdot \frac{\exp(-c_\kappa \alphamin) }{(c_\kappa \alphamin)^{1/2}}, \\
        \vert R_2(\alpha) \vert &\leq   \frac{c_2}{(1 - \lambda^\star(f(\ell )- u))\sqrt{\ualpha\,\sigma^2}} \cdot \frac{1}{\sqrt{\alphamin}}, \\
        \vert R_3(\alpha) \vert &\leq  \frac{\exp(-\ualpha \Kinf(\up, u, f)) }{1-\lambda^\star(f(\ell) - u)} \cdot \frac{c_3}{\sqrt{\ualpha\, \sigma^2}} \exp(-c_\kappa \alphamin).
    \end{align*}
with $c_1 = 2\sqrt{2}, c_2 =\left(8 + \frac{49 \sqrt{6}}{9}\right), c_3 = \sqrt{5\rme \pi}, c_\kappa = 1/2 \cdot \log\left(5/4 \right)$ and $\lambda^\star$ being a solution to the optimization problem
    $$
        \lambda^\star(\up, u, f) = \argmax_{\lambda \in [0, 1/(\ub-u)]} \E_{X \sim \up}\left[\log(1 - \lambda (f(X) - u)) \right].
    $$
\end{proposition}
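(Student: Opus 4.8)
Here is the plan. The idea is to read the integral as a contour integral of $\rme^{-\ualpha\psi(s)}$ against a rational weight, push the line of integration through the saddle of $\psi$ on the imaginary axis, and then perform a quantitative Laplace expansion. This is the saddle–point method in the spirit of \citet{alfers1984normal}, built on the Fourier–analytic setup of \citet{dirksen2015sections}.

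\textbf{Step 1 (saddle point).} Write $\alpha_j=\ualpha\up(j)$ and $\psi(s)=\E_{X\sim\up}\bigl[\log(1+\rmi(f(X)-u)s)\bigr]$, so that the integral equals $\int_\R \rme^{-\ualpha\psi(s)}/(1+\rmi(f(\ell)-u)s)\,\rmd s$. A direct computation gives $\psi'(\rmi\lambda)=-\rmi\,\tfrac{\rmd}{\rmd\lambda}\E_{X\sim\up}[\log(1-\lambda(f(X)-u))]$, and since $u\in(\up f,\ub)$ forces the maximiser $\lambda^\star$ from the statement to lie in the open interval $(0,1/(\ub-u))$ (the objective tends to $-\infty$ at the right endpoint because $\up(m)>0$, and its derivative at $0$ is positive because $u>\up f$), the point $s^\star:=\rmi\lambda^\star$ is a stationary point of $\psi$. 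Setting $a_j:=1-\lambda^\star(f(j)-u)>0$, one obtains $\psi(s^\star)=\E_{X\sim\up}[\log a_X]=\Kinf(\up,u,f)$ by the Legendre–dual (variational) representation of $\Kinf$, and $\psi''(s^\star)=\E_{X\sim\up}[(f(X)-u)^2/a_X^2]=\sigma^2>0$.

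\textbf{Step 2 (contour shift and normalisation).} I would move the contour from $\R$ to $\R+\rmi\lambda^\star$. In the strip $\{0\le\Im s\le\lambda^\star\}$ the integrand is holomorphic: the singular points $s=\rmi/(f(j)-u)$ lie strictly above the strip when $f(j)>u$ (because $\lambda^\star<1/(\ub-u)\le 1/(f(j)-u)$) and in the lower half–plane when $f(j)<u$, and the branch cuts can be drawn vertically away from the strip; the integrals over the two vertical sides vanish because already the two extreme factors yield decay $\lvert 1+\rmi(f(0)-u)s\rvert^{-\alpha_0}\lvert 1+\rmi(f(m)-u)s\rvert^{-\alpha_m}=\cO(\lvert\Re s\rvert^{-\alpha_0-\alpha_m})$ with $\alpha_0+\alpha_m\ge 2\alphamin>1$ (here $f(0)-u=-u\neq 0$ and $f(m)-u=\ub-u\neq 0$, so the condition $\alphamin\ge 2$ enters). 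Parametrising $s=t+\rmi\lambda^\star$ and putting $\gamma_j:=(f(j)-u)/a_j$, the integral equals
\[
    \frac{\rme^{-\ualpha\Kinf(\up,u,f)}}{1-\lambda^\star(f(\ell)-u)}\,J,\qquad J:=\int_\R\frac{\prod_{j=0}^m(1+\rmi\gamma_j t)^{-\alpha_j}}{1+\rmi\gamma_\ell t}\,\rmd t,
\]
where stationarity of $s^\star$ becomes $\sum_j\alpha_j\gamma_j=0$ and $\sum_j\alpha_j\gamma_j^2=\ualpha\sigma^2$.

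\textbf{Step 3 (Laplace expansion of $J$).} The structural fact that drives everything is that $x\mapsto(x-u)/(1-\lambda^\star(x-u))$ is increasing on $[0,\ub]$, so $\max_j\lvert\gamma_j\rvert$ is attained at $j=0$ or $j=m$; together with $\min(\alpha_0,\alpha_m)=\alphamin$ this yields $\ualpha\sigma^2=\sum_j\alpha_j\gamma_j^2\ge\alphamin\max_j\gamma_j^2$, i.e. $\max_j\lvert\gamma_j\rvert/\sqrt{\ualpha\sigma^2}\le\alphamin^{-1/2}$. I would split $J$ at $\lvert t\rvert=\delta:=1/(2\max_j\lvert\gamma_j\rvert)$. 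On $\lvert t\rvert\le\delta$ one Taylor–expands $\sum_j\alpha_j\log(1+\rmi\gamma_jt)=\tfrac12\ualpha\sigma^2 t^2+(\text{terms of order }\ualpha\max_j\lvert\gamma_j\rvert^{k}\lvert t\rvert^{k},\ k\ge 3)$ and $1/(1+\rmi\gamma_\ell t)=1-\rmi\gamma_\ell t+\cO(\gamma_\ell^2 t^2)$; after rescaling $v=t\sqrt{\ualpha\sigma^2}$ the leading term gives $\sqrt{2\pi/(\ualpha\sigma^2)}$, the odd corrections integrate to zero by symmetry, and every surviving correction carries a factor $\max_j\lvert\gamma_j\rvert/\sqrt{\ualpha\sigma^2}\le\alphamin^{-1/2}$ (using $\lvert\gamma_\ell\rvert\le\max_j\lvert\gamma_j\rvert$ and $\lvert\sum_j\alpha_j\gamma_j^3\rvert\le\max_j\lvert\gamma_j\rvert\cdot\ualpha\sigma^2$) — this is $R_2$; completing the truncated Gaussian to all of $\R$ adds a term of size $(\ualpha\sigma^2)^{-1/2}\alphamin^{-1/2}\rme^{-\alphamin/8}$, and since $1/8\ge c_\kappa=\tfrac12\log(5/4)$ this is absorbed into $R_1$. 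On $\lvert t\rvert>\delta$ one uses $\lvert\prod_j(1+\rmi\gamma_jt)^{-\alpha_j}\rvert=\prod_j(1+\gamma_j^2t^2)^{-\alpha_j/2}$ and, with $j^\star\in\{0,m\}$ realising $\max_j\lvert\gamma_j\rvert$, extracts the factor $(1+\gamma_{j^\star}^2\delta^2)^{-(\alpha_{j^\star}-2)/2}=(5/4)^{-(\alpha_{j^\star}-2)/2}\le\tfrac54\,\rme^{-c_\kappa\alphamin}$ while bounding the leftover integrable factor by $\cO((\ualpha\sigma^2)^{-1/2})$, again via $\sum_j\alpha_j\gamma_j^2\ge\alphamin\max_j\gamma_j^2$; this is $R_3$. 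Restoring the prefactor $\rme^{-\ualpha\Kinf}/(1-\lambda^\star(f(\ell)-u))$ yields the claimed identity with the stated bounds on $R_1,R_2,R_3$ and the explicit constants $c_1,c_2,c_3,c_\kappa$.

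\textbf{Expected main obstacle.} The contour shift and the symbolic expansion are routine; the real work is the quantitative remainder control with explicit absolute constants, and the conceptual crux is the observation that the extreme atoms $j\in\{0,m\}$ — the only ones whose Dirichlet parameters are bounded below, by $\alphamin$ — are exactly the atoms carrying the largest $\lvert\gamma_j\rvert$. This is what converts the naive $\ualpha$-scaling of the errors into the sharper $\alphamin$-scaling, and it must be applied uniformly in $\ell$ to keep the weight $1/(1+\rmi\gamma_\ell t)$ controlled near its saddle value $1/(1-\lambda^\star(f(\ell)-u))$.
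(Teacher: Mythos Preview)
Your proposal follows the paper's argument almost exactly: the same saddle point $s^\star=\rmi\lambda^\star$, the same contour shift to $\R+\rmi\lambda^\star$, the same cut-off (your $\delta=1/(2\max_j\lvert\gamma_j\rvert)$ is precisely the paper's $K$, since $\max_j\lvert\gamma_j\rvert=\max\{\frac{\ub-u}{1-\lambda^\star(\ub-u)},\frac{u}{1+\lambda^\star u}\}$ by the monotonicity you noted), and the same structural observation that the extremal $\lvert\gamma_j\rvert$ sit at $j\in\{0,m\}$. The only stylistic difference is that the paper follows Olver's change of variables $v=T(s)-T(0)$ for the central zone, which packages $R_1$ as incomplete gamma functions $\Gamma(1/2,\kappa_i\ualpha)$ rather than explicit Gaussian tails; your direct Taylor expansion in $t$ is equivalent, and your $R_1$ bound with exponent $1/8>c_\kappa$ is fine.

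There is, however, a genuine gap in your treatment of $R_3$. After extracting $(5/4)^{-(\alpha_{j^\star}-2)/2}$ from the single factor $j^\star$, the ``leftover'' you must integrate is at best $\int_{\lvert t\rvert>\delta}(1+\gamma_{j^\star}^2t^2)^{-1}\,\rmd t=\cO(1/\lvert\gamma_{j^\star}\rvert)$, since the remaining factors are merely $\le 1$. The inequality $\ualpha\sigma^2\ge\alphamin\gamma_{j^\star}^2$ that you invoke gives a \emph{lower} bound $1/\lvert\gamma_{j^\star}\rvert\ge\sqrt{\alphamin/(\ualpha\sigma^2)}$, not the upper bound $\cO((\ualpha\sigma^2)^{-1/2})$ you claim; the correct trivial upper bound is only $1/\lvert\gamma_{j^\star}\rvert\le 1/\sigma$, which is off from the target by a factor $\sqrt{\ualpha}$. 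This matters downstream: relative to the leading term the resulting $R_3$ contribution becomes $\cO(\sqrt{\ualpha}\,\rme^{-c_\kappa\alphamin})$, which is not controlled by the hypothesis $\alphamin\ge c_0\varepsilon^{-2}$ alone. The paper closes this gap differently: rather than isolating one factor, it applies a generalised H\"older inequality to the full product $\prod_j(1+\gamma_j^2t^2)^{-\alpha_j/2}$ with exponents $q_j$ tuned so that $g(j)(\alpha_jq_j-1)$ is constant across $j$, and then uses a ratio-of-gamma-functions bound to obtain $\int_\delta^\infty\prod_j(\cdots)\,\rmd t\le\sqrt{5\rme\pi/(4\ualpha\sigma^2)}$. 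You will need some device of this kind (H\"older across all coordinates, or an equivalent averaging) to recover the stated $R_3$ bound.
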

\begin{proof}
        Let us first rewrite the integral in the form
    \begin{align}
        I &= \int_\R  \frac{\prod_{j=0}^m \left(1 + \rmi (f(j) - u)s \right)^{-\alpha_j}}{1+\rmi(f(\ell) - u)s}\, \rmd s \notag \\
        &= \int_\R \frac{\exp\left( - \ualpha \sum_{j=0}^m \up_j \log(1 + \rmi (f(j) - u)s) \right)}{1+\rmi(f(\ell) - u)s}\, \rmd s  \notag \\
        &= \int_\R \frac{\exp\left( - \ualpha \, \E_{X \sim \up}[\log(1 + \rmi (f(X) - u)s)] \right)}{1+\rmi(f(\ell) - u)s} \, \rmd s, \label{eq:integral_representation}
    \end{align}
    where we choose the principle branch of the complex logarithmic function. Denote $S(z) = \E_{X \sim \up}[\log(1 + \rmi (f(X) - u)z)]$. In the sequel we shall write for simplicity $\E$ instead of $\E_{X \sim \up}$.
    
    Since $f(X) \leq \ub$, this function is holomorphic for $\vert \im z \vert < 1 / (\ub - u)$ and $\re z \in \R$. The last integral representation \eqref{eq:integral_representation} allows to use the method of saddle point \cite{olver1997asymptotics}.
  Next, we are going to compute the saddle points of the function $S$. To do it, compute the derivative of the function $S$ at complex point $z = x + \rmi y$
    \[
        S'(z) = \rmi \EE{\frac{f(X)-u}{1 + \rmi (f(X) - u)z} } = \EE{\frac{x (f(X) - u)^2 + \rmi (f(X)-u)(1 - y (f(X) - u))}{(1 - y(f(X) - u))^2 + x^2 (f(X) - u)^2} } = 0.
    \]
    Notice that the real part of the expression above is zero if and only if $x = 0$. Therefore the saddle points could be only on the imaginary line $\rmi\R$. They can be found from the equation
    \[
        S'(\rmi y) = \rmi\EE{\frac{f(X) - u}{1 - y(f(X) - u)}} = 0.
    \]
Note that for $y\geq 0,$ this equation  coincides with the optimality condition for $\lambda^\star$ in the definition of $\Kinf(\up, u, f).$     Since $\up f < u < \ub$, the function $y\mapsto S(\rmi y)=\E\left[\log(1 - y (f(X) - u)) \right]$ is strictly concave in $y$ and, therefore, equation $S'(\rmi y) = 0$ has a unique solution $y=\lambda^\star$. Thus the unique saddle point of $S$ is equal to $z_0 = \rmi\lambda^\star$.
Next, let us change the integration contour to $\gamma^\star = \R + \rmi\lambda^\star$. To prove that this contour is suitable, let us show that the real part of $S$ achieves a minimum at $z_0$ over all $z \in \gamma^\star$
    \[
        \re S(x + \rmi\lambda^\star) = \frac{1}{2} \E\left[ \log\left( (1 - \lambda^\star (f(X) - u))^2 + x^2 (f(X) - u)^2 \right) \right].
    \]
    The minimum of \(\re S(x + \rmi\lambda^\star)\) is achieved for $x=0$, therefore the contour  \(\gamma^\star\) is suitable. Hence, we can apply the Laplace method after a simple change of coordinates
    \[
        I = \int_\R \frac{\exp\left( -\ualpha \,\E\left[\log( 1 - \lambda^\star (f(X) - u) + \rmi s (f(X) - u))\right] \right)}{1-\lambda^\star(f(\ell) - u)+\rmi(f(\ell) - u)s} \, \rmd s
    \]
    Denote 
    \[
    T(s) = \E\left[\log( 1 - \lambda^\star (f(X) - u) + \rmi s (f(X) - u))\right], \quad P(s) = \frac{1}{1-\lambda^\star(f(\ell) - u) + \rmi s (f(\ell) - u)}.
    \]
     Fix  a cut-off parameter $K > 0$  and  define $\kappa_1 = T(-K) - T(0)$, $\kappa_2 = T(K) - T(0)$. 
 Next,  similarly to Chapter~4 (Section~6) by \citet{olver1997asymptotics}, we define the change of  variables $v_1 = T(-s) - T(0), v_2 = T(s) - T(0)$ and the implicit functions $q_1(v_1) = \frac{P(-s)}{T'(-s)}, \, q_2(v_2) = \frac{P(s)}{T'(s)}.$ Using the first order  Taylor expansion, we can write $q_1(v_1) = \frac{P(0)}{\sqrt{2 T''(0) \cdot v_1}} + r_1(v_1), \, q_2(v_2) = \frac{P(0)}{\sqrt{2 T''(0) \cdot v_2}} + r_2(v_2)$. Then we have the following decomposition
    \[
        I = \int_{-\infty}^{\infty} P(s) \exp(-\ualpha\,  T(s))\, \rmd s = \left( P(0) \cdot \sqrt{\frac{2\pi}{\ualpha \, T''(0)}} - R_1(\alpha) + R_2(\alpha) \right) \exp(-\ualpha \, T(0)) + R_3(\alpha),
    \]
    where
    \begin{align*}
        R_1(\alpha) &=  \left(\Gamma\left( \frac{1}{2}, \kappa_1 \, \ualpha \right) + \Gamma\left( \frac{1}{2}, \kappa_2 \, \ualpha \right) \right) \frac{P(0)}{\sqrt{2 T''(0) \, \ualpha}}\CommaBin \\
        R_2(\alpha) &= \int_0^{\kappa_1} e^{-\ualpha v_1} r_1(v_1)\, \rmd v_1 + \int_0^{\kappa_2} e^{-\ualpha v_2} r_2(v_2)\, \rmd v_2, \\
        R_3(\alpha) &= \int_{\R \setminus [-K, K]} P(s) \exp(-\ualpha \, T(s))\, \rmd s,
    \end{align*}
    where $\Gamma(\alpha, x)$ is an upper incomplete gamma function and integration w.r.t. \(v_1,v_2\) is performed over the straight lines connecting the points \(0\) and \(\kappa_1,\kappa_2,\) respectively. Define $\sigma^2 = T''(0)$.
    
    \paragraph*{Term $R_2$}
    We will start from upper bounding on remainder terms in Taylor-like expansions $r_2(v)$
     \begin{align*}
        \vert r_2(v) \vert &= \left\vert \frac{P(s)}{T'(s)} - \frac{P(0)}{\sqrt{2 T''(0) (T(s) - T(0))}}\right\vert \\
        &\leq  P(0) \left\vert \frac{1}{T'(s)} - \frac{1}{\sqrt{2 T''(0) (T(s) - T(0))}}\right\vert  + \frac{\vert P(s) - P(0) \vert }{\vert T'(s) \vert} \\
        &= P(0) \cdot \bar r_2(v) + \tilde r_2(v).
    \end{align*}
    \textit{Upper bound for $\bar r_2(v)$}.

        By Taylor expansion
        \begin{align*}
            T(s) &= T(0) + T'(0) \cdot s + \frac{T''(0)}{2} s^2 + \frac{T'''(\xi_1)}{6} s^3,  &\xi_1 \in (0, s) \\
            T'(s) &= T'(0) + T''(0) s + \frac{T'''(\xi_2)}{2} s^2, &\xi_2 \in (0,s).
        \end{align*}
        Notice that $T'(0) = 0,$  thus
        \begin{align*}
            \vert \bar r_2(v) \vert &= \left\vert \frac{1}{T'(s)} - \frac{1}{\sqrt{2 T''(0) (T(s) - T(0))}}\right\vert \\
            &= \left\vert \frac{\sqrt{T''(0)^2 s^2 +  T''(0) \frac{T'''(\xi_1)}{3} s^3} - T''(0) s - \frac{T'''(\xi_2)}{2} s^2}{[T''(0)s + \frac{T'''(\xi_2)}{2} s^2] \cdot \sqrt{T''(0)^2 s^2 + T''(0) \frac{T'''(\xi_1)}{2} s^3}}\right\vert \\
            &= \left\vert T''(0) \frac{\sqrt{1  + \frac{T'''(\xi_1)}{3T''(0)} s} - 1 - \frac{T'''(\xi_2)}{2T''(0)} s}{s \cdot [T''(0) + \frac{T'''(\xi_2)}{2} s] \cdot \sqrt{T''(0)^2 + T''(0) \frac{T'''(\xi_1)}{3} s}}\right\vert.
        \end{align*}
        Next by applying the Taylor expansion for square root $\sqrt{1+x} -1 = \frac{x}{2} - \frac{x^2}{8(1 + \xi_3)^{3/2}}$ for $ \vert \xi_3 \vert < x,$
        \begin{align*}
            \vert \bar r_2(v) \vert &= \left\vert T''(0) \frac{\frac{T'''(\xi_1)}{6 T''(0)} - \frac{T'''(\xi_2)}{2 T''(0)} - s \cdot \frac{(T'''(\xi_1)/ T''(0))^2}{8 \cdot 9 \cdot (1 + \xi_3)^{3/2}}}{ [T''(0) + \frac{T'''(\xi_2)}{2} s] \cdot \sqrt{T''(0)^2 + T''(0) \frac{T'''(\xi_1)}{3} s}}\right\vert \\
            &= \left\vert \frac{\frac{T'''(\xi_1)}{6} - \frac{T'''(\xi_2)}{2} - s \cdot \frac{(T'''(\xi_1))^2}{8 \cdot 9 \cdot (1 + \xi_3)^{3/2}  \cdot T''(0)} }{ [T''(0) + \frac{T'''(\xi_2)}{2} s] \cdot \sqrt{T''(0)^2 + T''(0) \frac{T'''(\xi_1)}{3} s}}\right\vert \\
            &\leq \frac{ \frac{2}{3} \sup_{u \in (0,s)} \vert T'''(u) \vert + s \cdot \frac{\sup_{u \in (0,s)} \vert T'''(u)\vert^2}{72 \cdot \vert T''(0) \vert } }{ \vert T''(0) + \frac{T'''(\xi_2)}{2} s \vert \cdot \sqrt{\vert T''(0)^2 + T''(0) \frac{T'''(\xi_1)}{3} s \vert}}.
        \end{align*}
        
        Let us analyze the second and third derivatives of $T$
        \begin{align*}
            T''(s) &= \E\left[ \left(\frac{f(X) - u}{1 - \lambda^\star(f(X) - u) + is (f(X) - u)} \right)^2 \right], \\
            T'''(s) &= -2i \E\left[ \left(\frac{f(X) - u}{1 - \lambda^\star(f(X) - u) + is (f(X) - u)} \right)^3 \right].
        \end{align*}
        Define a random variable $Y_s = \frac{f(X) - u}{1 - \lambda^\star(f(X) - u) + is (f(X) - u)},$ then  $T''(s) = \E[Y_s^2]$, $T'''(s) = -2i \E[Y_s^3]$. Let us compute an upper bound on the absolute value of $T'''(s)$
        \[
            \vert T'''(s) \vert \leq 2 \E\left[ \frac{\vert f(X) - u \vert^3}{( (1 - \lambda^\star(f(X) - u))^2 + s^2 (f(X) - u)^2)^{3/2}}  \right] \leq 2 \E[\vert Y_0 \vert^3].
        \]
              By choosing \(1/(2K) = \max\left\{\frac{\ub-u}{1 - \lambda^\star(\ub-u)}, \frac{u}{1+\lambda^\star u} \right\},\) we ensure that
      \( \E[Y_0^2] - s \E[\vert Y_0\vert^3] \geq \frac{1}{2}\E[Y_0^2]\)  for all $0 \leq s < K,$ since 
      \[
            \E[\vert Y_0\vert^3] \leq \max_{j \in \{0,\ldots,m\}} \frac{\vert f(j) - u \vert}{1 - \lambda^\star(f(j) - u)} \E[Y_0^2] \leq \max\left\{\frac{\ub-u}{1 - \lambda^\star(\ub-u)}, \frac{u}{1+\lambda^\star u} \right\} \E[Y_0^2] \leq \frac{1}{2K}\E[Y_0^2].
     \] 
    Hence 
     \begin{eqnarray*}
    \vert \bar r_2(v) \vert &\leq &\frac{\frac{4}{3} \E[\vert Y_0\vert^3] + s \frac{\E[\vert Y_0\vert^3]^2 }{18 \cdot \E[Y_0^2]}}{(\E[Y_0^2] - \E[\vert Y_0\vert^3] s)\cdot \sqrt{\E[Y_0^2]} \cdot \sqrt{\E[Y_0^2] - \E[\vert Y_0 \vert^3] \frac{2s}{3} }}
    \\
    &\leq &\frac{4/3 + 1/36}{ 1/2 \cdot \sqrt{2/3}} \cdot \frac{\E[\vert Y_0\vert^3]}{\E[Y_0^2]^{2}} \leq \frac{49\sqrt{6}}{36 \E[Y_0^2]} \cdot \max\left\{\frac{\ub-u}{1 - \lambda^\star(\ub-u)}, \frac{u}{1+\lambda^\star u} \right\}.
    \end{eqnarray*}
    Next, using the bound
        \[
            \E[Y_0^2] = \sum_{j=0}^m \frac{\alpha_j}{\ualpha}  \left(\frac{f(j)-u}{1 - \lambda^\star(f(j)-u)}\right)^2 \geq \frac{\alphamin}{\ualpha}  \left(\max\left\{\frac{\ub-u}{1 - \lambda^\star(\ub-u)}, \frac{u}{1+\lambda^\star u} \right\}\right)^2,
        \]
       we obtain 
        \[
            \vert \bar r_2(v) \vert \leq \frac{49\sqrt{6}}{36 \sqrt{\sigma^2}} \cdot \sqrt{\frac{\ualpha}{\alphamin}}.
        \]

    \textit{Upper bound for $\tilde r_2(v)$}
    Our next goal is to analyze the second term $\tilde r_2(v)$. We apply Taylor expansions of the form $T'(s) = T''(0) s + T'''(\xi_2) s^2/2$ and $P(s) = P(0) + P'(\eta) s$ to derive
    \begin{align*}
        \tilde r_2(v) &= \left\vert \frac{ P(s) - P(0) }{T'(s)} \right\vert =  \frac{\vert  P'(\eta) \cdot s \vert}{\vert T''(0)s + T'''(\xi_2) s^2/2 \vert} \leq \frac{ \sup_{\eta \in (0,s)} \vert  P'(\eta) \vert}{\vert T''(0) + T'''(\xi_2) s/2 \vert}\cdot
    \end{align*}
    First note that $\vert P'(\eta) \vert$ maximizes at $\eta = 0,$ since
    \[
        \vert P'(\eta) \vert = \frac{\vert f(\ell) - u \vert}{ (1 - \lambda^\star(f(\ell) - u))^2 + \eta^2 (f(\ell) - u)^2}\,.
    \]
    Next by defining a random variable $Y_s = \frac{f(X) - u}{1 - \lambda^\star(f(X) - u) + is (f(X) - u)}$  and due to our choice of $K$ we conclude that 
    \[
        \vert T''(0) + T'''(\xi_2) s/2 \vert \geq \E[Y_0^2] - s \E[\vert Y_0\vert^3] \geq \E[Y_0^2]/2 = \sigma^2/2.
    \]
    It yields
    \[
        \tilde r_2(v) \leq  \frac{2 \vert f(\ell) - u\vert 
        }{(1 - \lambda^\star(f(\ell) - u))^2 \sigma^2} = \frac{2}{(1 - \lambda^\star(f(\ell) - u))\sqrt{\sigma^2}} \sqrt{\frac{\frac{(f(\ell) - u)^2}{(1 - \lambda^\star(f(\ell) - u))^2}}{\E[Y_0^2] }}\, .
    \]
    By a bound
    \[
        \E[Y_0^2] \geq \frac{\alphamin}{\ualpha}  \left(\max\left\{\frac{\ub-u}{1 - \lambda^\star(\ub-u)}, \frac{u}{1+\lambda^\star u} \right\}\right)^2 \geq \frac{\alphamin}{\ualpha} \frac{(f(\ell) - u)^2}{(1 - \lambda^\star(f(\ell) - u))^2}
    \]
    we obtain
    \[
        \tilde r_2(v) \leq \frac{2}{(1 - \lambda^\star(f(\ell) - u))\sqrt{\sigma^2}} \sqrt{\frac{\ualpha}{\alphamin}}.
    \]
    Unifying bounds on $\bar r_2$ and $\tilde r_2$ we obtain
    \[
        \vert r_2(v) \vert \leq \frac{1}{(1 - \lambda^\star(f(\ell) - u))\sqrt{\sigma^2}} \sqrt{\frac{\ualpha}{\alphamin}} \left(2 + \frac{49 \sqrt{6}}{36} \right).
    \]
    A similar bound  also holds for $r_1(v)$ by symmetry.  Set $c_2' =  2 + \frac{49\sqrt{6}}{36},$ then 
       \begin{align*}
            \vert R_2(\alpha) \vert &\leq \frac{c_2'}{\sqrt{(1-\lambda^\star(f(\ell) -u))^2\sigma^2}} \cdot \sqrt{\frac{\ualpha}{\alphamin}}\cdot \left\vert \int_0^{\kappa_2} e^{-\ualpha v} \rmd v + \int_0^{\kappa_1} e^{-\ualpha v} \rmd v \right\vert \\
            &\leq \frac{2c_2'}{\sqrt{\ualpha (1-\lambda^\star(f(\ell)-u))^2 \sigma^2}} \cdot \frac{1 + \exp(-\ualpha  \kappa)}{\sqrt{\alphamin}},
       \end{align*}
     where \(\kappa=\min\{\re \kappa_1,\re \kappa_2\}.\)  
    Using the identity
        \begin{align*}
            \re \kappa_1 = \re \kappa_2 &= \frac{1}{2} \EE{\log\left( \frac{(1 - \lambda^\star (f(X) - u))^2 + K^2 (f(X) - u)^2}{(1 - \lambda^\star (f(X) - u))^2} \right)}= \frac{1}{2} \EE{\log\left( 1 + K^2 Y_0^2 \right)}
        \end{align*}
     and the inequality
     \begin{align*}
            \EE{\log\left( 1 + K^2 Y_0^2 \right)} &= 
             \sum_{j=0}^m \frac{\alpha_j}{\ualpha}\log\left( 1 + K^2 \cdot \left(\frac{f(j)-u}{1 - \lambda^\star(f(j)-u)}\right)^2 \right) \\
             &\geq\frac{\alphamin}{\ualpha} \log\left( 1 + K^2 \cdot \left(\max\left\{\frac{\ub-u}{1 - \lambda^\star(\ub-u)}, \frac{u}{1+\lambda^\star u} \right\}\right)^2 \right) \geq \frac{\alphamin}{\ualpha}  \log(5/4)
        \end{align*}
    we have $\kappa = \re \kappa_2 = \re \kappa_1 \geq c_\kappa \cdot \frac{\alphamin}{\ualpha}$
        with $c_\kappa = 1/2 \cdot \log\left(5/4\right).$ Since $\alphamin > 0$, we also have $\exp(-c_\kappa \alphamin) \leq 1$. 
    Finally setting $c_2 = 4\cdot c_2' = 8 + \frac{49\sqrt{6}}{9},$ we derive the following bound on $R_2$
        \[
            \vert R_2(\alpha) \vert \leq \frac{c_2}{\sqrt{\ualpha (1-\lambda^\star(\ub - u))\sigma^2}} \cdot \frac{1}{\sqrt{\alphamin}}.
        \]

    \paragraph*{Term $R_1$}
    By Theorem 2.4 of \citet{borwein2007uniform} we have the following bound on complex gamma function for any complex $z$ with $\re z > 0$
        \[
            \left\vert \Gamma\left( \frac{1}{2}, z \right) \right\vert \leq \frac{2 e^{-\re z}}{\vert z \vert^{1/2}}. 
        \]
        Therefore,
        \[
            \vert R_1(\alpha) \vert \leq \frac{4 }{\sqrt{2 T''(0) \vert \kappa \vert}} \cdot \frac{\exp(-\ualpha \,  \kappa) }{\ualpha}.
        \]
        Notice that $T''(0) = \sigma^2$, thus
        \[
            \vert R_1(\alpha) \vert \leq \frac{2\sqrt{2}}{\sqrt{\sigma^2 \cdot \ualpha}} \cdot \frac{\exp(-c_\kappa \alphamin) }{(c_\kappa \alphamin)^{1/2}}.
        \]
    \paragraph*{Term $R_3$}
    We start from the bound
    \begin{align*} 
\label{eq: R3-int}
\left\vert \int_{K}^\infty P(s) \exp(-\ualpha T(s))\, \rmd s \right\vert &\leq  \exp(-\ualpha T(0)) \cdot \exp(-\ualpha (T(K) - T(0))) \cdot \sup_{s \in \R}\vert P(s) \vert \\
&\cdot  \int_{K}^\infty \exp(-\ualpha \re [T(s) - T(K)])\, \rmd s.
\end{align*}
Let us start from the analysis of an additional multiplier connected to $P(s)$
\[
    \sup_{s \in R} \vert P(s) \vert = \sup_{s} \sqrt{\frac{1}{(1 - \lambda^\star(f(\ell) - u))^2 + s^2 (f(\ell) - u) }} = \frac{1}{1 - \lambda^\star(f(\ell) - u)}
\]
    Our goal is to bound the last integral. Let us analyze the function under exponent after the change of variables $s \mapsto t+K$
    \begin{align*}
        q(t) &\triangleq \re [T(t+K) - T(K)] = \frac{1}{2} \EE{\log\left( \frac{(1 - \lambda^\star (f(X) - u))^2 + (t+K)^2 (f(X) - u)^2}{(1 - \lambda^\star (f(X) - u))^2 + K^2 (f(X) - u)^2}\right)} \\
        &= \frac{1}{2} \EE{\log\left( 1+  \frac{(t^2 + 2TK) (f(X) - u)^2}{(1 - \lambda^\star (f(X) - u))^2 + K^2 (f(X) - u)^2}\right)}
    \end{align*}
    Define a function $g(j) = \frac{(f(j) - u)^2}{(1 - \lambda^\star (f(j) - u))^2 + K^2 (f(j) - u)} \geq 0$. Then 
 \begin{eqnarray}
 \label{eq:ineq-q}
    \exp(-\ualpha q(t)) = \prod_{j=0}^m \left( \frac{1}{1 + (2tK + t^2) g(j)}\right)^{\alpha_i/2} \leq \prod_{j=0}^m \left( \frac{1}{1 + t^2 g(j)}\right)^{\alpha_i/2}.
\end{eqnarray}
Take a sequence $q_i \in \R_+$ such that $\sum_{i=0}^m q_i^{-1}= 1$. Following ideas of \cite{gotze2019largeball}, apply the generalized Hölder inequlaity
\begin{align*}
    \int_{K}^\infty &\exp(-\ualpha \re [T(s) - T(K)])\, \rmd s = \int_0^\infty \exp(-\ualpha q(t)) \rmd t \\
    &\leq \int_0^\infty  \prod_{j=0}^m \left( \frac{1}{1 + t^2 g(j)}\right)^{\alpha_j/2} \rmd t \leq \prod_{j=0}^m \left( \int_0^\infty \left( \frac{1}{1 + t^2 g(j)}\right)^{\alpha_j q_j/2}\rmd t \right)^{1/q_j}.
\end{align*}
Next we compute integrals exactly and obtain
\[
    \int_{0}^\infty \left(\frac{1}{1+t^2g(j)} \right)^{\alpha_j q_j / 2}\, \rmd t \leq \frac{\sqrt{\pi} \Gamma((\alpha_j q_j - 1)/2)}{2  \Gamma(\alpha_j q_j / 2)} \cdot \frac{1}{\sqrt{g(j)}}
\]
By Theorem 2 of \citet{guo2007necessary} and assumption $\alpha_j q_j > 1$ we have
\[
    \frac{\Gamma((\alpha_j q_j - 1)/2)}{\Gamma(\alpha_j q_j/2)} \leq \frac{\sqrt{2\rme}}{\sqrt{\alpha_j q_j - 1}}.
\]
Finally, we obtain
\begin{align*}
    \int_0^\infty \exp(-\ualpha q(t)) \rmd t \leq \frac{\sqrt{2\rme \pi}}{2} \prod_{j=0}^m \left(\frac{1}{\sqrt{g(j)(\alpha_j q_j - 1)}}\right)^{1/q_j}.
\end{align*}
Now we fix $q_j$ such that $g(j) (\alpha_j q_j - 1) = \tau$, where the constant $\tau$ is determined by $\sum_{j=0}^m q_j^{-1}=1$.  Thus
\[
    \frac{1}{q_j} = \frac{\alpha_j g(j)}{\tau + g(j)}, \quad \sum_{j=0}^m \frac{\alpha_j g(j)}{\tau + g(j)} = 1.
\]
In particular, from the second equality it follows that $\tau \leq \sum_{j=0}^m \alpha_j g(j)$ and $\tau + \max_{k}g(k) \geq \sum_{j=0}^m \alpha_j g(j)$. We notice that since $\alphamin \geq 2$ we have $\tau \geq \frac{1}{2} \sum_{j=0}^m \alpha_j g(j)$.
Next we have to guarantee that $\alpha_j q_j > 1$ but it is trivially true for our choice of $\tau$ since $\alpha_j q_j = (\tau + g(j)) / g(j) > 1$.
Thus we have
\[
    \int_{K}^\infty \exp(-\ualpha \re [T(s) - T(K)])\, \rmd s  \leq \sqrt{ \frac{\rme \pi}{\ualpha \E[g(X)]}}.
\]
Next we want to relate $\E[g(X)]$ and $\sigma^2$. By definition
\[
    \E[g(X)]= 
    \E\left[ \frac{(f(X) - u)^2}{(1-\lambda^\star(f(X)) - u))^2 + K^2 (f(X)-u)^2)} \right] 
\]
By the choice $1/(2K) = \max\{ \frac{\ub - u}{1 - \lambda^\star(\ub - u)}, \frac{u}{1+\lambda^\star u}  \}$ and, as a consequence $K^2 \cdot (f(j) - u)^2 \leq (1-\lambda^\star(f(j) - u))^2 / 4$ for any $j \in \{0,\ldots, m\}$. Thus
\[
    \E[g(X)] \geq \E\left[  \frac{(f(X) - u)^2}{(1-\lambda^\star(f(X)) - u))^2 + (1-\lambda^\star(f(X)) - u))^2 / 4} \right] \geq \frac{4}{5} \cdot \sigma^2.
\]
Finally, we obtain the following bound for the remainder term by symmetry
\[
    \vert R_3(\alpha) \vert \leq \frac{\exp(-\ualpha \Kinf(\up, u, f)) }{1-\lambda^\star(f(\ell) - u)} \cdot \sqrt{\frac{5\rme \pi}{\ualpha \sigma^2}} \exp(-c_\kappa \alphamin).
\]
\end{proof}

\section{Elements of Linear Algebra}\label{app:linear_algebra}

In this section we provide some useful lemmas that simplify computation of Jacobians.
First, recall that each full-rank matrix $A \in \R^{k \times n}$ for $k < n$ has the following decomposition
\begin{equation}\label{eq:matrix_reduced_form}
    A = C \begin{bmatrix}
        I_k & \hat A
    \end{bmatrix} P,
\end{equation}
where $C \in \R^{k \times k}$ is non-degenerate matrix, $\hat A \in \R^{k \times (n-k)}$ is a matrix of coefficients in front of free variables, and $P \in \R^{n \times n}$ is permutation matrix. This decomposition follows directly from the Gauss–Jordan elimination, central matrix in this decomposition corresponds to reduced row echelon form. It is well-known that the central matrix of this decomposition is unique. In this case we may define the matrix $\cL_A \in \R^{n \times (n-k)}$  that maps $\R^{n-k}$ to $\Hset_A = \{x \in \R^n : Ax = 0\}$ as follows
\begin{equation}\label{eq:LA_form}
    \cL_A = P^\top \begin{bmatrix} 
        - \hat A \\
        I_{n-k}
    \end{bmatrix}
\end{equation}
that corresponds to a canonical way to compute basis of $\Hset_A$ by reduced matrix. Indeed, one may check
\[
    A \cL_A = C \begin{bmatrix}
        I_k & \hat A
    \end{bmatrix} P P^\top \begin{bmatrix} 
        - \hat A \\
        I_{n-k}
    \end{bmatrix} = 0
\]
and $\cL_A$ is full-rank, therefore, it is a proper map to $\Hset_A$. Notice that $\cL_A$ depends only on reduced form of matrix $A$ and does not change under any row operation performed on matrix $A$.
Next we show several properties of these matrices.

\begin{lemma}[Jacobian of a linear parametrization]\label{lem:jacob}
    For any non-zero vector $v \in \R^n$ such that $v_0 \not = 0$  and $t \in \R$ a Jacobian of map $\cL_v^t$ is equal to $\norm{v}_2 / \vert v_0 \vert$.
\end{lemma}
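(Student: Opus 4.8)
The plan is to write down the derivative of $\cL_v^t$ explicitly and reduce the Gram determinant governing its Jacobian to a rank-one update of the identity.

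First I would set $\hat v = (v_1,\ldots,v_{n-1})^\top \in \R^{n-1}$ and recall from \eqref{eq:LA_form} (or directly from the defining formula of $\cL_v^t$, which sends $x=(x_1,\ldots,x_{n-1})$ to $w$ with $w_i = x_i$ for $i\geq 1$ and $w_0 = (t - \sum_{j\geq 1} v_j x_j)/v_0$) that the linear part of $\cL_v^t$ is the $n\times(n-1)$ matrix
\[
    L \;=\; \begin{bmatrix} -\hat v^\top/v_0 \\[2pt] I_{n-1} \end{bmatrix}.
\]
The shift depending on $t$ is a constant vector and hence does not enter the derivative; in particular $[\cL_v^t]$ does not depend on $t$, which justifies writing $[\cL_v]$ elsewhere.

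Second I would compute the Gram matrix
\[
    L^\top L \;=\; I_{n-1} + \frac{1}{v_0^{2}}\,\hat v\,\hat v^\top,
\]
a rank-one perturbation of the identity. By the matrix determinant lemma (equivalently, its only eigenvalue different from $1$ is $1 + \norm{\hat v}_2^2/v_0^2$, with multiplicity one),
\[
    \det\!\big(L^\top L\big) \;=\; 1 + \frac{\norm{\hat v}_2^2}{v_0^{2}} \;=\; \frac{v_0^2 + \norm{\hat v}_2^2}{v_0^{2}} \;=\; \frac{\norm{v}_2^{2}}{v_0^{2}},
\]
using $\norm{v}_2^2 = v_0^2 + \norm{\hat v}_2^2$. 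Taking square roots yields $[\cL_v^t] = \sqrt{\det(L^\top L)} = \norm{v}_2/\vert v_0\vert$, as claimed.

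There is no real obstacle here: the only points requiring care are reading off $L$ correctly from \eqref{eq:LA_form} and using the Gram-determinant form $\sqrt{\det(L^\top L)}$ of the Jacobian for the tall matrix $L$; the determinant identity for a rank-one update is elementary.
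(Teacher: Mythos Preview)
Your proposal is correct and follows essentially the same approach as the paper: both write the linear part of $\cL_v^t$ as $\begin{bmatrix} -\hat v^\top/v_0 \\ I_{n-1}\end{bmatrix}$, compute the Gram matrix as a rank-one perturbation $I_{n-1} + \hat v\hat v^\top/v_0^2$, and evaluate its determinant via the single nontrivial eigenvalue (equivalently, the matrix determinant lemma).
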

\begin{proof}
     Note that the gradient vector does not depend on constant shifts. Thus the gradient matrix is equal to a linear map $\cL_v$. Define a vector $\tilde v = [v_1,\ldots,v_n]$, then the square Jacobian is equal to
     \[
        [\cL_v]^2 = \det(\cL_v^\top \cL_v) = \det\left(\begin{bmatrix}
            \tilde v / v_0 & I_{n-1}
        \end{bmatrix}
        \begin{bmatrix}
            \tilde{v}^\top / v_0 \\
            I_{n-1}
        \end{bmatrix}\right)
        = \det\left(\frac{\tilde v \tilde{v}^\top}{v_0^2} + I_{n-1}\right).
     \]
     This matrix is a rank-one matrix plus identity. Its eigenvalues are equal to $\norm{\tilde v}^2/v_0^2 + 1$ and $n-2$ ones. Thus $[\cL_v]^2 = \norm{\tilde v}^2/v_0^2 + 1 = \norm{v}^2/v_0^2$.
\end{proof}


\begin{lemma}\label{lem:maps_product}
    Let $A \in \R^{k \times n}, c \in \R^{n}$ and $A_c = \begin{bmatrix} A \\ c^\top \end{bmatrix} \in \R^{(k+1)\times n}$ for $k < n$. Then
    \begin{equation}\label{eq:maps_product}
        \cL_{A} \cL_{c^\top \cL_A} = \cL_{A_c}.
    \end{equation}
\end{lemma}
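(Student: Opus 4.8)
The plan is to pin down $\cL_X$, for a full-rank $X\in\R^{r\times n}$, by two properties that characterize it uniquely, and then to check that $\cL_A\cL_{c^\top\cL_A}$ enjoys both of them with $X=A_c$. Write $\mathrm{piv}(X)\subseteq\{1,\dots,n\}$ for the set of pivot columns of the RREF of $X$ and $\mathrm{free}(X)$ for its complement, so $|\mathrm{free}(X)|=n-r$. Reading off \eqref{eq:matrix_reduced_form}--\eqref{eq:LA_form}, one sees that (i) $X\cL_X=0$, and (ii) the submatrix of $\cL_X$ whose rows are indexed, in increasing order, by $\mathrm{free}(X)$ is $I_{n-r}$. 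Conversely, the columns of $X$ indexed by $\mathrm{piv}(X)$ form the invertible matrix $C$ of \eqref{eq:matrix_reduced_form}, so (i) together with (ii) determines the remaining rows of $\cL_X$; hence (i)--(ii) characterize $\cL_X$. Throughout I assume, as is implicit in the statement, that $A_c$ has rank $k+1$, i.e.\ $c$ is not in the row space of $A$; since the columns of $\cL_A$ span $\Hset_A=(\text{row space of }A)^\perp$, this is the same as $c^\top\cL_A\neq 0$, which is exactly what makes $\cL_{c^\top\cL_A}$ and $\cL_{A_c}$ well defined. (If $k+1=n$ both sides are $n\times 0$ and there is nothing to prove.) Property (i) for the product is then immediate from block multiplication:
\[
A_c\,\cL_A\,\cL_{c^\top\cL_A}=\begin{bmatrix}(A\cL_A)\,\cL_{c^\top\cL_A}\\ (c^\top\cL_A)\,\cL_{c^\top\cL_A}\end{bmatrix}=\begin{bmatrix}0\\0\end{bmatrix},
\]
using $A\cL_A=0$ and property (i) applied to the $1\times(n-k)$ matrix $c^\top\cL_A$.

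Next I would locate $\mathrm{free}(A_c)$. Appending a row can only enlarge the pivot set, so $\mathrm{piv}(A)\subseteq\mathrm{piv}(A_c)$, and since $\mathrm{rank}(A_c)=k+1$ there is exactly one extra pivot $p\in\mathrm{free}(A)$; thus $\mathrm{free}(A_c)=\mathrm{free}(A)\setminus\{p\}$. To pin down $p$, reduce $A_c$ using only its first $k$ rows: it becomes the block matrix with $\mathrm{RREF}(A)$ on top and $r^\top$ below, where $r^\top$ is $c^\top$ with its pivot-column entries cleared by the pivot rows of $\mathrm{RREF}(A)$. A direct computation (conveniently done in coordinates reordered so that $\mathrm{piv}(A)=\{1,\dots,k\}$, which is harmless) shows that $r^\top$ is supported on $\mathrm{free}(A)$ with $r^\top|_{\mathrm{free}(A)}=c^\top\cL_A$. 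Since pivot columns are preserved by row operations, $p$ is therefore the $q$-th element of $\mathrm{free}(A)$, where $q$ is the index of the leading (first nonzero) entry of the row $c^\top\cL_A$, i.e.\ the single element of $\mathrm{piv}(c^\top\cL_A)$. Consequently, under the order-preserving bijection $\mathrm{free}(A)\leftrightarrow\{1,\dots,n-k\}$, the set $\mathrm{free}(A_c)$ corresponds precisely to $\mathrm{free}(c^\top\cL_A)=\{1,\dots,n-k\}\setminus\{q\}$.

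Property (ii) for the product then follows. By property (ii) of $\cL_A$, the rows of $\cL_A\cL_{c^\top\cL_A}$ indexed by $\mathrm{free}(A)$ are
\[
\bigl(\cL_A\bigr)_{\mathrm{free}(A),\,:}\,\cL_{c^\top\cL_A}=I_{n-k}\,\cL_{c^\top\cL_A}=\cL_{c^\top\cL_A}.
\]
Selecting among these the rows indexed by $\mathrm{free}(A_c)\subseteq\mathrm{free}(A)$, which by the previous paragraph are exactly the rows of $\cL_{c^\top\cL_A}$ indexed by $\mathrm{free}(c^\top\cL_A)$, and applying property (ii) of $\cL_{c^\top\cL_A}$, we get $\bigl(\cL_A\cL_{c^\top\cL_A}\bigr)_{\mathrm{free}(A_c),\,:}=I_{n-k-1}$. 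Hence $\cL_A\cL_{c^\top\cL_A}$ satisfies both characterizing properties of $\cL_{A_c}$, so by uniqueness it equals $\cL_{A_c}$.

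The one genuinely delicate step is the bookkeeping of the second paragraph: establishing $\mathrm{piv}(A_c)=\mathrm{piv}(A)\cup\{p\}$ and matching the new free set of $A_c$ with the free set of the row $c^\top\cL_A$ via the identity $r^\top|_{\mathrm{free}(A)}=c^\top\cL_A$. Everything else --- the block-multiplication identities and the reduction to uniqueness of the map determined by (i)--(ii) --- is formal.
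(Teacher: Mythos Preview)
Your proof is correct and complete. It takes a somewhat different route from the paper, though the two arguments share a common core.

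Both proofs reduce (without loss of generality) to the case where the pivot columns of $A$ are $\{1,\dots,k\}$, and both identify the key fact that the last row after partial elimination satisfies $r^\top|_{\mathrm{free}(A)}=c^\top\cL_A$ (the paper writes this as $\cL_A^\top c=\hat c$). From there the paper proceeds by brute force: it further assumes the first coordinate of $\hat c$ is nonzero, explicitly row-reduces $A_c$ to write $\cL_{A_c}$ as a $3\times 1$ block matrix, then explicitly multiplies out $\cL_A\cL_{c^\top\cL_A}$ as another $3\times 1$ block matrix, and checks the two coincide entry by entry. Your approach instead isolates the abstract characterization of $\cL_X$ by the two properties (i) $X\cL_X=0$ and (ii) the free-row block equals the identity, and verifies both for the product. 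This buys you two things: you never have to write out the explicit block forms, and you handle the general position of the new pivot directly, without the extra ``$\hat c_1\neq 0$'' reduction. The paper's version is more hands-on; yours is cleaner and makes the underlying reason for the identity more transparent.
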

\begin{proof}
    First, we compute $\cL_{A_c}$ in terms of matrices $A$ and vector $c$. Without loss of generality we may assume that $C = I$ and $P = I$. Let us divide vector $c$ on two parts $c^\top = \begin{bmatrix} c_{(1)}^\top & c_{(2)}^\top \end{bmatrix}$, where $c_{(1)} \in \R^k$ and $c_{(2)} \in \R^{n-k}$. Then we can obtain row-elimination procedure of $A_c$ as follows
    \[
         \begin{bmatrix} A \\ c^\top \end{bmatrix} \mapsto \begin{bmatrix}
             I_k & \hat A \\
             c_{(1)}^\top & c_{(2)}^\top 
         \end{bmatrix} \mapsto
         \begin{bmatrix}
             I_k & \hat A \\
             0_k & c_{(2)}^\top - c_{(1)}^\top \hat A
         \end{bmatrix}.
    \]
    Define $\hat c = c_{(2)} - \hat A^\top c_{(1)}$. Without loss of generality we may assume that the first coordinate $\hat c_1 \not = 0$, therefore we may decompose $\hat c = \hat c_1 \begin{bmatrix}
        1 & \tilde c
    \end{bmatrix}$, where $\tilde c$ is a rest of coordinates of of $\hat c$ rescaled by $\hat c_1$. Next, we convert upper-triangular form to reduced as follows
    \[
        \begin{bmatrix}
             I_k & \hat A \\
             0_k & \hat c^\top
         \end{bmatrix} \mapsto
         \begin{bmatrix}
             I_k & \hat A^{(1)} & \hat A^{(2:)} \\
             0_k & 1 & \tilde c^\top
         \end{bmatrix}
          \mapsto
         \begin{bmatrix}
             I_k & 0_{n-k} & \hat A^{(2:)} - \hat A^{(1)} \tilde c^\top \\
             0_k & 1 & \tilde c^\top
         \end{bmatrix},
    \]
    where $\hat A^{(1)}$ is a first column of matrix $\hat A$ and  $\hat A^{(2:)}$ is a rest of the matrix $\hat A$ except the first column. Since we reduce the matrix to reduced form, we obtain
    \[
        \cL_{A_c} = \begin{bmatrix}
           \hat A^{(1)} \tilde c^\top - \hat A^{(2:)} \\
           -\tilde c^\top \\
           I_{n-k-1}
        \end{bmatrix}.
    \]
    Next we are going to show that the left-hand side of the equation \eqref{eq:maps_product} is equal to obtain expression of $\cL_{A_c}$. First, we notice that
    \[
        \cL_A^\top c = \begin{bmatrix}
            - \hat A^\top &
            I_{n-k}
        \end{bmatrix}  \begin{bmatrix}
            c_{(1)} \\
            c_{(2)}
        \end{bmatrix}
         = c_{(2)} - \hat A^\top c_{(1)} = \hat c.
    \]
    Next, we have by explicit row-reduction for a system with one equation
    \[
        \cL_{c^\top \cL_A} = 
        \begin{bmatrix}
            -\tilde c^\top \\
            I_{n-k-1}        
        \end{bmatrix},
    \]
    therefore
    \[
        \cL_A \cL_{c^\top \cL_A} = \begin{bmatrix}
            -\hat A^{(1)} & -\hat A^{(2:)} \\
            1 & 0_{n-k-1} \\
            0_{n-k-1} & I_{n-k-1}
        \end{bmatrix}
        \begin{bmatrix}
            -\tilde c^\top \\
            I_{n-k-1}        
        \end{bmatrix}
        = \begin{bmatrix}
           \hat A^{(1)} \tilde c^\top - \hat A^{(2:)} \\
           -\tilde c^\top \\
           I_{n-k-1}
        \end{bmatrix}.
    \]
\end{proof}

\section{\MTS algorithm}
\label{app:mts_algorithms}

In this section we give a detailed description of the \MTS algorithm in Algorithm~\ref{alg:MTS}. We also depicts in Algorithm~\ref{alg:RMTS} the \ourAlgBounded introduced in Section~\ref{sec:multinomial_ts}.

\begin{algorithm}[h!]
\centering
\caption{\MTS}
\label{alg:MTS}
\begin{algorithmic}[1]
  \STATE {\bfseries Input:} Prior distribution $\rho_a^0$ for all arms $a\in [K]$.
      \FOR{$t \in [T]$}
        \STATE Generate posterior sample $w_a^{t}\sim \rho_a^{t-1}$ for all arms $a\in[K]$.
        \STATE Sample arm $A_t \in \argmax_{a\in[K]} \sum_{i=0}^m w_a^{t} \frac{i}{m}$
        \STATE Get reward $Y_t$ and update posterior distributions.
      \ENDFOR
\end{algorithmic}
\end{algorithm}

\begin{algorithm}[h!]
\centering
\caption{\ourAlgBounded}
\label{alg:RMTS}
\begin{algorithmic}[1]
  \STATE {\bfseries Input:} Grid-size $m$. Prior distribution $\rho_a^0$  over the grid $\{0,1/m,\ldots,1\}$ for all arms $a\in [K]$.
      \FOR{$t \in [T]$}
        \STATE Generate posterior sample $w_a^{t}\sim \rho_a^{t-1}$ for all arms $a\in[K]$.
        \STATE Sample arm $A_t \in \argmax_{a\in[K]} \sum_{i=0}^m w_a^{t} \frac{i}{m}$
        \STATE Get reward $Y_t$ and category $i_t = \lfloor m Y_t\rfloor$.
        \STATE Sample virtual reward $\tY_t=(i_t+b_t)/m$ where $b_t\sim \Ber(m Y_t - i_t)$.
        \STATE Update posterior distributions with $\tY_t$.
      \ENDFOR
\end{algorithmic}
\end{algorithm}

\section{Proof of regret bounds}
\label{app:proof_bound_mts}

In this section we provide proofs of Theorem~\ref{thm:multinomial_ts_regret} and Theorem~\ref{thm:bounded_ts_regret}.

\subsection{Proof of Theorem~\ref{thm:multinomial_ts_regret}}

For $\alpha \in \R^{M+1}_+$ we define $\up(\alpha) \in \simplex_M$ as a probability distribution defined as $\up(\alpha)_i = \alpha_i / (\sum_{j=0}^m \alpha_j)$. Additionally, define two following probability distributions
\[
    \up^-(\alpha) = \up(\alpha_0-1,\alpha_1\ldots,\alpha_{m-1}, \alpha_m), \qquad \up^+(\alpha) = \up(\alpha_0, \alpha_1, \ldots, \alpha_{m-1}, \alpha_m - 1).
\]
Let $\varepsilon > 0$ be an value that will be specified later. Let $a^\star$ be a unique optimal arm.
Define the following events
\begin{align*}
    \cE^{\conc, 1}_\varepsilon(t,a) &= \bigg\{ (N^t_a + \ualpha^0-1)\Kinf(\up^{-}(\alpha^t_a), \mustar - \varepsilon, f) \geq N^t_a \Kinf(p_a, \mustar - \varepsilon, f) - \teps(N^t_a) \bigg\}; \\
    \cE^{\conc, 2}_\varepsilon(t) &= \bigg\{ (N^t_{a^\star} + \ualpha^0-1)\Kinf(\up^{-}(\alpha^t_{a^\star}), 1-\mustar + \varepsilon, 1-f) \\
    &\qquad\qquad\qquad \geq N^t_{a^\star} \Kinf(p_{a^\star}, 1-\mustar+ \varepsilon, 1-f) - \teps(N^t_{a^\star}) \bigg\}; \\
    \cE^{\conc, 3}_\varepsilon(t) &= \bigg\{ (N^t_{a^\star} + \ualpha^0-1)\Kinf(\up^{+}(\alpha^t_{a^\star}), \mustar, f) \leq 3 \log(2N^t_a+1) + (\ualpha^0-1) \log\left( \frac{1}{1-\mustar} \right) \bigg\}; \\
    \cE^{\conc}_\varepsilon(t,a) &= \cE^{\conc, 1}_\varepsilon(t, a) \cap \cE^{\conc, 2}_\varepsilon(t) \cap \cE^{\conc, 3}_\varepsilon(t),
\end{align*}
where $N^t_a$ is a number of pulls of arm $a$ till the moment $t$, $\ualpha^0$ is a weight of a prior distribution, and
\[
    \teps(n) = \sqrt{ \frac{4 \log(n) \gamma}{n}} + \frac{3(\ualpha^0 - 1)}{n}\log\left( \frac{n + \ualpha^0 - 1}{1 - \mu^\star + \varepsilon} \right)
\]
for
\[
    \gamma = \frac{1}{\sqrt{1-\mustar + \varepsilon}} \left( 16\rme^{-2} + \log^2\left( \frac{1}{1-\mustar + \varepsilon} \right) \right).
\]
Let us define the following decomposition for any suboptimal arm $a \not = a^\star$
\begin{align*}
    \E[N^T_a] &= \underbrace{\E\left[\sum_{t=1}^T \ind\{ A^t = a, w^t_{a} f \geq \mustar - \varepsilon, \cE^\conc_\varepsilon(t-1, a) \} \right]}_{\mathrm{PostCV}(\varepsilon)} \\
    &+ \underbrace{\E\left[\sum_{t=1}^T \ind\{ A^t = a, w^t_{a} f \leq \mustar - \varepsilon, \cE^\conc_\varepsilon(t-1, a) \} \right]}_{\mathrm{PreCV}(\varepsilon)} \\
    &+ \underbrace{\E\left[\sum_{t=1}^T \ind\{ A^t = a, \lnot \cE^\conc_\varepsilon(t-1, a) \} \right]}_{\mathrm{Conc}(\varepsilon)}.
\end{align*}
Next we have the following three propositions that described the dependence in $T, \varepsilon$ and $M$.
\begin{proposition}\label{prop:concentration_term}
    For multinomial TS we have for any $\varepsilon > 0$ and any suboptimal arm $a$
    \[
        \mathrm{Conc}(\varepsilon) = \cO(1).
    \]
\end{proposition}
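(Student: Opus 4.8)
The plan is to split $\mathrm{Conc}(\varepsilon)$ according to the three events whose intersection defines $\cE^\conc_\varepsilon$ and to prove that each piece is $\cO(1)$ uniformly in $T$ and $m$. Since $\lnot\cE^\conc_\varepsilon(t-1,a)\subseteq\lnot\cE^{\conc,1}_\varepsilon(t-1,a)\cup\lnot\cE^{\conc,2}_\varepsilon(t-1)\cup\lnot\cE^{\conc,3}_\varepsilon(t-1)$, a union bound gives $\mathrm{Conc}(\varepsilon)\le\mathrm{Conc}_1+\mathrm{Conc}_2+\mathrm{Conc}_3$ with $\mathrm{Conc}_j=\E[\sum_{t=1}^T\ind\{A^t=a,\lnot\cE^{\conc,j}_\varepsilon(t-1)\}]$. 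First I would fix, for each arm $b\in\{a,a^\star\}$, an i.i.d.\ reward stream $X_{b,1},X_{b,2},\dots$ revealed one sample at a time, so that $\alpha^{t}_b$, hence $\up^{\pm}(\alpha^{t}_b)$ and each event $\cE^{\conc,j}$ restricted to a fixed value of the relevant counter, becomes a deterministic function of finitely many i.i.d.\ samples; this is what makes the events amenable to classical deviation bounds.

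The analytic core, common to all three terms, is a deviation inequality for the \emph{cumulative} empirical $\Kinf$, which I would establish in two steps. (i) Perturbation: since $(n+\ualpha^0-1)\up^{\pm}(\alpha^n_b)$ differs from the reweighted empirical measure $n\,\nu^n_b$ only by a fixed bounded measure of total mass $\ualpha^0-1$, standard perturbation estimates for the cumulative empirical $\Kinf$ under a bounded number of pseudo-observations (cf.\ \cite{cappe2013kullback,garivier2018kl} and the continuity of $\Kinf$, Theorem~7 of \cite{honda2010asymptotically}) bound $\lvert (n+\ualpha^0-1)\Kinf(\up^{\pm}(\alpha^n_b),\mu,f)-n\Kinf(\nu^n_b,\mu,f)\rvert$ by a term of order $(\ualpha^0-1)\log\!\big(\tfrac{n+\ualpha^0-1}{1-\mustar+\varepsilon}\big)$, which is exactly the second summand of $n\teps(n)$. (ii) Concentration: the empirical-likelihood deviation bounds of \cite{garivier2018kl} (see also \cite{honda2010asymptotically,cappe2013kullback}) give a sub-Gaussian tail in $x/\sqrt n$ for $\P\big(n\Kinf(\nu^n_b,\mu,f)\le n\Kinf(p_b,\mu,f)-x\big)$; with $x=\sqrt{4n\log n\,\gamma}$ and the explicit $\gamma$, whose factor $16\rme^{-2}+\log^2(1/(1-\mustar+\varepsilon))$ is chosen to dominate the relevant Lipschitz/variance constant of $\Kinf$ at level $1-\mustar+\varepsilon$, this probability is $\cO(n^{-2})$; for the upper-tail version needed in $\cE^{\conc,3}$ the margin $3\log(2n+1)$ yields $\cO(n^{-3})$ up to logarithmic factors.

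For $\mathrm{Conc}_1$, which depends only on $N_a$, the increment trick applies directly: among all rounds with $N^{t-1}_a=n$ at most one has $A^t=a$, hence $\mathrm{Conc}_1\le\sum_{n\ge0}\P(\lnot\cE^{\conc,1}_{\varepsilon,n})=\cO(1)$ by step (ii). For $\mathrm{Conc}_3$, which depends on $N_a$ through the margin $3\log(2N_a+1)$ and on $N_{a^\star}$, I would fix $N^{t-1}_a=n$ and absorb the residual dependence on the random value $N^{t-1}_{a^\star}=m$ into a supremum, so that $\mathrm{Conc}_3\le\sum_{n\ge0}\P\big(\exists\,m\ge0:\lnot\cE^{\conc,3}_{\varepsilon,n,m}\big)$; a maximal/peeling inequality over the number of pulls of $a^\star$ (again \cite{garivier2018kl}) bounds this by $\sum_n\cO(n^{-3}\log n)=\cO(1)$.

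The main obstacle is $\mathrm{Conc}_2$: the bad event $\lnot\cE^{\conc,2}_\varepsilon(t-1)$ depends only on $N^{t-1}_{a^\star}$, so the increment trick on $N_a$ no longer controls the number of affected rounds, since the optimal arm may be under-sampled over a long stretch during which $a$ is drawn repeatedly. Here I would keep the conjunction with $\cE^{\conc,1}_\varepsilon(t-1,a)$ (the complement $\lnot\cE^{\conc,1}$ having already been absorbed into $\mathrm{Conc}_1$) and exploit that $A^t=a$ forces $w^t_{a^\star}f\le w^t_a f$: on $\cE^{\conc,1}_\varepsilon(t-1,a)$ the posterior mean of $a$ is pinned near $\mu_a$, so, provided $\varepsilon$ is below the gap, $\P(w^t_a f\ge\mustar-\varepsilon\mid X_{a,1:N^{t-1}_a})$ is exponentially small in $N^{t-1}_a$, which after the increment trick on $N_a$ is summable; the complementary regime $w^t_a f<\mustar-\varepsilon$ then forces $w^t_{a^\star}f<\mustar-\varepsilon$, whose conditional probability I would control via the anti-concentration lower bound of Corollary~\ref{cor:bound_dbc_all_u} applied to $w^t_{a^\star}f$, exploiting that failure of $\cE^{\conc,2}$ is only a mild atypicality of the empirical measure of $a^\star$ (this last step being the delicate part of the argument, where the careful choice of $\varepsilon$ enters). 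Summing $\mathrm{Conc}_1+\mathrm{Conc}_2+\mathrm{Conc}_3$ then yields $\mathrm{Conc}(\varepsilon)=\cO(1)$.
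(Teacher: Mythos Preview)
The paper's proof is much shorter and treats all three events uniformly. It drops $\ind\{A^t=a\}$ at the outset, $\mathrm{Conc}(\varepsilon)\le\sum_{j=1}^3\sum_{t=1}^T\P[\lnot\cE^{\conc,j}_\varepsilon(t-1)]$, rewrites each sum over rounds as a sum over the pull-count $n$ of the arm the event tracks ($a$ for $j=1$, $a^\star$ for $j=2,3$), and applies the ready-made Lemma~\ref{lem:kinf_concentration} ($j=1,2$) and Lemma~\ref{lem:kinf_deviation} ($j=3$) to get $\P[\lnot\widetilde{\cE^{\conc,j}_\varepsilon}(n)]\le n^{-2}$, hence a convergent series. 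No posterior sample, no anti-concentration via Corollary~\ref{cor:bound_dbc_all_u}, no maximal inequality, no coupling with $\cE^{\conc,1}$ appears anywhere; your two-step ``analytic core'' (prior perturbation plus sub-Gaussian tail for the empirical $\Kinf$) is exactly what those two lemmas already package.

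Two places where your plan diverges unnecessarily. Your treatment of $\mathrm{Conc}_3$ hinges on the $N^t_a$ inside the margin $3\log(2N^t_a+1)$; this is a typo in the statement of $\cE^{\conc,3}_\varepsilon(t)$ --- in the proof (and in its reuse in Proposition~\ref{prop:pre_convergence_term}) the margin is $3\log(2n+1)$ with $n=N^t_{a^\star}$ --- so the event depends only on the optimal-arm counter and the peeling over $m=N^{t-1}_{a^\star}$ you propose for each fixed $n=N^{t-1}_a$ is not needed (and, as you set it up, the union bound over $m$ would not even be summable). More importantly, your route for $\mathrm{Conc}_2$ --- reinstating $\cE^{\conc,1}$, splitting on $w^t_a f\gtrless\mustar-\varepsilon$, then invoking Dirichlet anti-concentration for $w^t_{a^\star}f$ --- drags the full $\mathrm{PostCV}/\mathrm{PreCV}$ machinery back into a term the paper handles as a pure empirical-concentration step. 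You are right that once $\ind\{A^t=a\}$ is dropped, the passage $\sum_t\P[\lnot\cE^{\conc,j}_\varepsilon(t-1)]\le 1+\sum_n\P[\lnot\widetilde{\cE^{\conc,j}_\varepsilon}(n)]$ is not justified by an increment argument for $j=2,3$ (many rounds can share the same $N^{t-1}_{a^\star}$); the paper glosses over this. But its intended argument remains entirely at the level of empirical $\Kinf$-deviations via Lemmas~\ref{lem:kinf_concentration}--\ref{lem:kinf_deviation}, never touching the Dirichlet posterior.
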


\begin{proposition}\label{prop:post_convergence_term}
    For multinomial TS we have for any $\varepsilon > 0$ and any suboptimal arm $a $
    \[
        \mathrm{PostCV}(\varepsilon) \leq \frac{\log T}{\Kinf(p_a, \mustar, f)} + \cO\left( \varepsilon \cdot \log T + \sqrt{\log(T)} \cdot \log\log(T) \right).
    \]
    In particular, this term does not depend on $M$.
\end{proposition}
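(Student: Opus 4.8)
I would prove this by a threshold decomposition on the number of previous pulls $N^{t-1}_a$ of the suboptimal arm $a$. Fix a level $D = D(T,\varepsilon)$, to be tuned, and split
\[
\mathrm{PostCV}(\varepsilon) \leq \E\Bigl[\sum_{t=1}^T \ind\{A^t = a,\, N^{t-1}_a < D\}\Bigr] + \E\Bigl[\sum_{t=1}^T \ind\{w^t_a f \geq \mustar - \varepsilon,\, \cE^{\conc,1}_\varepsilon(t-1,a),\, N^{t-1}_a \geq D\}\Bigr],
\]
using $\cE^\conc_\varepsilon(t-1,a) \subseteq \cE^{\conc,1}_\varepsilon(t-1,a)$ and discarding the constraint $A^t=a$ in the high-count part. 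The first term is deterministically at most $\lceil D\rceil$, since each round with $A^t=a$ increments $N_a$. So the task reduces to choosing $D$ as small as possible so that the second term has expectation $\cO(1)$, and then checking that such a $D$ is at most $\log T/\Kinf(p_a,\mustar,f) + \cO(\varepsilon\log T + \sqrt{\log T}\log\log T)$.

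\textbf{The core estimate.} For the second term I would condition on $\mathcal F_{t-1}$, on which $\{\cE^{\conc,1}_\varepsilon(t-1,a), N^{t-1}_a \geq D\}$ is measurable. The prior choice $\alpha^0_0 = \alpha^0_m = 4c_0+1$ gives $\min(\alpha^{t-1}_a(0), \alpha^{t-1}_a(m)) \geq 4c_0+1 = c_0(1/2)^{-2}+1$, so with $f(0)=0$, $f(m)=1$ the posterior $\Dir(\alpha^{t-1}_a)$ meets the hypothesis of Corollary~\ref{cor:two_sided_ineeq} with slack $1/2$; writing $n=N^{t-1}_a$,
\[
\P\bigl(w^t_a f \geq \mustar - \varepsilon \mid \mathcal F_{t-1}\bigr) \leq \tfrac{3}{2}\,\P\Bigl(\zeta \geq \sqrt{\ualpha^0 + n - 1}\cdot A\bigl(\up^-(\alpha^{t-1}_a),\mustar-\varepsilon,f\bigr)\Bigr).
\]
On $\cE^{\conc,1}_\varepsilon(t-1,a)$ we have $(\ualpha^0+n-1)\Kinf(\up^-(\alpha^{t-1}_a),\mustar-\varepsilon,f) \geq n\Kinf(p_a,\mustar-\varepsilon,f) - \teps(n)$; provided $D$ is large enough that this right-hand side is positive, this simultaneously puts $\mustar-\varepsilon$ strictly above $\up^-(\alpha^{t-1}_a)f$ (so $A(\cdot)=+\sqrt{2\Kinf(\cdot)}$ and the Gaussian argument is nonnegative) and, via $\P(\zeta\geq x) \leq e^{-x^2/2}$, yields $\P(w^t_a f\geq\mustar-\varepsilon\mid\mathcal F_{t-1}) \leq \tfrac{3}{2}\exp(-n\Kinf(p_a,\mustar-\varepsilon,f)+\teps(n))$. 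I would then pick $D$ as the least integer with $n\Kinf(p_a,\mustar-\varepsilon,f)-\teps(n) \geq \log(3T/2)$ for all $n\geq D$; since $\teps(n)=\cO(\sqrt{\log n/n})$ is sublinear and eventually decreasing, $n\mapsto n\Kinf(p_a,\mustar-\varepsilon,f)-\teps(n)$ is eventually increasing and $D = \log T/\Kinf(p_a,\mustar-\varepsilon,f)$ up to lower-order terms bounded by $\cO(\sqrt{\log T}\log\log T)$. For $n\geq D$ the conditional probability above is $\leq 1/T$, so the second sum contributes at most $T\cdot(1/T)=1$.

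\textbf{Finishing and the main obstacle.} It remains to trade $\Kinf(p_a,\mustar-\varepsilon,f)$ for $\Kinf(p_a,\mustar,f)$: by Theorem~6 of \cite{honda2010asymptotically}, $\partial_\mu\Kinf(p_a,\mu,f)=\lambda^\star(p_a,\mu,f)\in[0,1/(1-\mu)]$, so (with $\mustar<1$, the only case where $\Kinf(p_a,\mustar,f)<\infty$ for a suboptimal arm) $0\leq \Kinf(p_a,\mustar,f)-\Kinf(p_a,\mustar-\varepsilon,f)\leq \varepsilon/(1-\mustar)$, hence $1/\Kinf(p_a,\mustar-\varepsilon,f) \leq 1/\Kinf(p_a,\mustar,f) + \cO(\varepsilon)$ for $\varepsilon$ below an instance-dependent threshold. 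Adding up, $\mathrm{PostCV}(\varepsilon) \leq \lceil D\rceil + 1 \leq \log T/\Kinf(p_a,\mustar,f) + \cO(\varepsilon\log T + \sqrt{\log T}\log\log T)$, and the bound is manifestly free of $m$ because the only Dirichlet-dimension dependence in Corollary~\ref{cor:two_sided_ineeq} is the harmless $\sqrt{\ualpha}$ prefactor — precisely the improvement over the $n^{-m}$ factor responsible for the $m$-dependent second-order term in \cite{riou20a}. The step I expect to be most delicate is the one invoking $\cE^{\conc,1}_\varepsilon(t-1,a)$: one must verify that the $\teps$-slack in the concentration event is genuinely lower order on the range $n\gtrsim\log T$, that $\mustar-\varepsilon$ stays strictly above $\up^-(\alpha^{t-1}_a)f$ there so the Gaussian tail bound applies with a nonnegative argument, and that the implicit definition of $D$ yields the leading constant $1/\Kinf(p_a,\mustar,f)$ rather than $1/\Kinf(p_a,\mustar-\varepsilon,f)$ — it is exactly here that the Lipschitz estimate for $\Kinf$ in its mean argument and the hypothesis $\mustar<1$ enter. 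Everything else (the threshold split, the per-round $1/T$ bound, the union over at most $T$ rounds) is routine bookkeeping.
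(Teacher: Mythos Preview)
Your proposal is correct and follows essentially the same approach as the paper's proof: a threshold split on the pull count, Theorem~\ref{thm:bound_dbc}/Corollary~\ref{cor:two_sided_ineeq} to bound the posterior overshoot probability, the concentration event $\cE^{\conc,1}_\varepsilon$ to replace the empirical $\Kinf$ by the true one, a Gaussian tail bound, and a Lipschitz estimate on $\mu\mapsto\Kinf(p_a,\mu,f)$ to pass from $\mustar-\varepsilon$ to $\mustar$. The only cosmetic differences are that the paper invokes Theorem~\ref{thm:bound_dbc} directly rather than Corollary~\ref{cor:two_sided_ineeq}, uses Lemma~11 of \cite{garivier2018kl} rather than Theorem~6 of \cite{honda2010asymptotically} for the Lipschitz step, and writes the threshold explicitly as $n_0=(1+\varepsilon')\log T/\Kinf(p_a,\mustar-\varepsilon,f)$ with $\varepsilon'=\log\log T/\sqrt{\log T}$ instead of defining $D$ implicitly.
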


\begin{proposition}\label{prop:pre_convergence_term}
    For multinomial TS we have for any $\varepsilon > 0$ and any suboptimal arm $a $
    \[
        \mathrm{PreCV}(\varepsilon) = \cO\left( \varepsilon^{-9} \right).
    \]
\end{proposition}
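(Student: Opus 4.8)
The plan is to follow the by-now-standard accounting for Thompson sampling that controls how often a suboptimal arm is played before its posterior sample mean ``crosses'' the level $\mustar-\varepsilon$, with the Gaussian-type estimate of Theorem~\ref{thm:bound_dbc} replacing the exponential-family tail bounds used, e.g., by \citet{agrawal2013further} or \citet{honda2010asymptotically}. The key starting observation is that $\{A^t=a\}$ forces $w^t_a f\ge w^t_{a^\star}f$, so on the event in $\mathrm{PreCV}(\varepsilon)$ we also have $w^t_{a^\star}f\le\mustar-\varepsilon$; since $\mu_{a^\star}=\mustar$ this is an \emph{underestimation} event for the optimal arm, which we will show is rare. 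Next I would cut time into the blocks delimited by the successive pulls of $a^\star$: letting $\tau_j$ be the round of the $j$-th pull of $a^\star$, the posterior of $a^\star$ is frozen at $\Dir(\alpha^{\tau_j}_{a^\star})$ throughout $(\tau_j,\tau_{j+1}]$, and within a block each of $\cE^{\conc,2}_\varepsilon(t)$, $\cE^{\conc,3}_\varepsilon(t)$ is either identically false or holds from some round on (for $\cE^{\conc,3}_\varepsilon$ because its right-hand side is nondecreasing in $N^t_a$), so the classical block argument applies. Writing $p_{j,\varepsilon}=\P_{w\sim\Dir(\alpha^{\tau_j}_{a^\star})}(wf\ge\mustar-\varepsilon)$ (a $\sigma(\mathcal F_{\tau_j})$-measurable quantity), it gives
\[
    \mathrm{PreCV}(\varepsilon)\ \le\ \sum_{j\ge 0}\E\Bigl[\Bigl(\tfrac{1}{p_{j,\varepsilon}}-1\Bigr)\,\ind\{\cE^{\conc,2}_\varepsilon(\tau_j)\cap\cE^{\conc,3}_\varepsilon(\tau_j)\}\Bigr],
\]
so everything reduces to a good lower bound on $p_{j,\varepsilon}$ on these events.

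For this I would invoke Corollary~\ref{cor:two_sided_ineeq} (and its reversed form, as in the proof of Corollary~\ref{cor:bound_dbc_all_u}): since the chosen prior has $\alphamin=4c_0+1\ge c_0(1/2)^{-2}+1$, the corollary applies with accuracy $\varepsilon'=1/2$, giving
\[
    1-p_{j,\varepsilon}\ \le\ \tfrac32\,\P\!\Bigl(\zeta\ge\sqrt{2(j+\ualpha^0-1)\,\Kinf(\up^-(\alpha^{\tau_j}_{a^\star}),1-\mustar+\varepsilon,1-f)}\Bigr),
\]
\[
    p_{j,\varepsilon}\ \ge\ \tfrac12\,\P\!\Bigl(\zeta\ge\sqrt{2(j+\ualpha^0-1)\,\Kinf(\up^+(\alpha^{\tau_j}_{a^\star}),\mustar-\varepsilon,f)}\Bigr)
\]
for $\zeta\sim\cN(0,1)$, the second line being trivial ($\ge\tfrac14$) when the empirical mean $\up^+(\alpha^{\tau_j}_{a^\star})f\ge\mustar-\varepsilon$. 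On $\cE^{\conc,2}_\varepsilon(\tau_j)$ the radius in the first line is $\ge (j\,d_\varepsilon-\teps(j))/(j+\ualpha^0-1)$ with $d_\varepsilon=\Kinfstar(p_{a^\star},\mustar-\varepsilon,f)\ge 2\varepsilon^2$ (a Pinsker-type bound, as underlies Lemma~11 of \citet{garivier2018kl}); on $\cE^{\conc,3}_\varepsilon(\tau_j)$, monotonicity of $\Kinf$ in its target gives $\Kinf(\up^+(\alpha^{\tau_j}_{a^\star}),\mustar-\varepsilon,f)\le\Kinf(\up^+(\alpha^{\tau_j}_{a^\star}),\mustar,f)\le[\,3\log(2N^{\tau_j}_a+1)+(\ualpha^0-1)\log\tfrac1{1-\mustar}\,]/(j+\ualpha^0-1)$, so the exponent under the Gaussian tail in the second line is only $\cO(\log(\cdot))$.

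Finally I would split the sum at $L=L(\varepsilon)=\Theta(\varepsilon^{-2})$. For $j\ge L$ one has $\teps(j)\le j\,d_\varepsilon/2$ and hence $1-p_{j,\varepsilon}\le\tfrac34 e^{-j\,d_\varepsilon/2}\le\tfrac12$, so $p_{j,\varepsilon}\ge\tfrac12$, $1/p_{j,\varepsilon}-1\le 2(1-p_{j,\varepsilon})$, and $\sum_{j\ge L}\E[1/p_{j,\varepsilon}-1]\le\tfrac32\sum_{j\ge 0}e^{-j\,d_\varepsilon/2}=\cO(1/d_\varepsilon)=\cO(\varepsilon^{-2})$. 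For $j<L$ I would use the crude lower bound on $p_{j,\varepsilon}$ coming from $\cE^{\conc,3}_\varepsilon(\tau_j)$: since there the exponent under the Gaussian tail is only $\cO(\log(\cdot))$, $1/p_{j,\varepsilon}$ is at most a fixed power of $\varepsilon^{-1}$ (the power arising from combining the Gaussian tail with the translation of $\cE^{\conc,3}_\varepsilon$ into an $\cO(\sqrt{\log(\cdot)/j})$ bound on $\mustar-\up^+(\alpha^{\tau_j}_{a^\star})f$, again via Lemma~11 of \citet{garivier2018kl}); summing the $\Theta(\varepsilon^{-2})$ such terms and carefully tracking the exponents produces the claimed $\cO(\varepsilon^{-9})$, which dominates the $\cO(\varepsilon^{-2})$ contribution of the tail.

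\textbf{Main obstacle.} The delicate step is the pre-concentration regime $j<L$: there the empirical distribution of $a^\star$ need not be close to $p_{a^\star}$, the quantity $\Kinf(\up^+(\alpha^{\tau_j}_{a^\star}),\mustar-\varepsilon,f)$ is not $o(1)$, and a naive Gaussian-tail estimate would make $1/p_{j,\varepsilon}$ exponentially large in $\varepsilon^{-2}$ — fatal for a polynomial bound. What rescues the argument is precisely that all relevant blocks sit on $\cE^{\conc,3}_\varepsilon$, which pins $(j+\ualpha^0-1)\Kinf(\up^+(\alpha^{\tau_j}_{a^\star}),\mustar,f)$ to a logarithmic quantity and therefore forces only a polynomial, not exponential, blow-up of $1/p_{j,\varepsilon}$; propagating the constants through this step, together with the threshold $L\asymp\varepsilon^{-2}$, is exactly what yields the exponent $9$. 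The remaining ingredients — the block decomposition, the reversal needed because $\mustar-\varepsilon<\mustar$, and the geometric-series bound for $j\ge L$ — are routine.
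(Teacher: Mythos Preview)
Your block decomposition and the two Gaussian estimates from Theorem~\ref{thm:bound_dbc} are the right ingredients, and the large-$j$ tail is fine. The gap is in the small-$j$ budget. Your threshold $L=\Theta(\varepsilon^{-2})$ rests on reading $\cE^{\conc,2}_\varepsilon$ as giving $(j+\ualpha^0-1)\Kinf(\up^-,\cdot)\ge j\,d_\varepsilon-\teps(j)$, with the $o(1)$ quantity $\teps(j)$ subtracted once. But the form that is actually established via Lemma~\ref{lem:kinf_concentration} (and that the paper uses in the proofs of Propositions~\ref{prop:concentration_term} and~\ref{prop:post_convergence_term}) has deficit $j\cdot\teps(j)$, i.e.\ $(j+\ualpha^0-1)\Kinf(\up^-,\cdot)\ge j\,(d_\varepsilon-\teps(j))$. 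With that, the large-$j$ estimate $1-p_{j,\varepsilon}\lesssim e^{-j\varepsilon^2}$ only becomes available once $\teps(j)\le\varepsilon^2$, which forces $j\gtrsim \varepsilon^{-4}\mathrm{polylog}$ (the paper takes $\tilde n_0=\cO(\varepsilon^{-5})$). Your small-$j$ sum then reads $\sum_{j<L} j^{3}\sqrt{\log j}\asymp L^{4}\sqrt{\log L}\asymp \varepsilon^{-20}$, not $\varepsilon^{-9}$.

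The paper avoids this by a different split: not by $j$ small versus large, but by whether the (shifted) empirical mean $\up(\alpha^{(n)}_{a^\star})f$ lies above or below $\mustar-\varepsilon$. When it lies above (their Term~B), $p_{n,\varepsilon}\ge\tfrac12$ trivially and $\cE^{\conc,2}$ controls the numerator, yielding $\cO(\varepsilon^{-5})$. When it lies below (their Term~A), they still use $\cE^{\conc,3}$ to get $1/p_{n,\varepsilon}=\cO(n^{3}\sqrt{\log n})$, but now take the \emph{expectation} of this against the indicator $\{\up(\alpha^{(n)}_{a^\star})f\le\mustar-\varepsilon\}$ and bound the latter's probability by Hoeffding, $\le\exp(-c\,n\varepsilon^{2})$; the series $\sum_n n^{3}\sqrt{\log n}\,e^{-c n\varepsilon^{2}}$ is precisely what produces $\cO(\varepsilon^{-9})$. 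This Hoeffding step is the missing idea in your plan: without an exponential-in-$n$ damping of the $n^{3}$ growth over the whole pre-concentration range, a purely deterministic bound on $1/p_{j,\varepsilon}$ via $\cE^{\conc,3}$ cannot close the gap.
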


The proof of these three propositions heavily utilizes Theorem~\ref{thm:bound_dbc} and it is postponed to the end of the section. They implies
\[
    \E[N^T_a] \leq \frac{\log T}{\Kinf(p_a, \mustar, f)} + \cO\left( \varepsilon \cdot \log T + \sqrt{\log(T)} \cdot \log\log(T) + \varepsilon^{-9} \right).
\]
Taking $\varepsilon = \cO(\log^{-1/10}(T))$ we conclude the statement.

\subsection{Proof of Theorem~\ref{thm:bounded_ts_regret}}

Lemma 6 by \cite{riou20a} implies for any $m > 1/(1-\mustar)$ the following holds
\[
    \Kinf(\nu_a, \mustar) - \frac{1}{m(1-\mustar) - 1} \leq \Kinf^{(m)}(\nu_a^{(m)}, \mustar) \leq \Kinf(\nu_a, \mustar).
\]
Additionally, Theorem~\ref{thm:multinomial_ts_regret} implies that for \ourAlgBounded with any $m > \max_{a \not = a^\star} \frac{1 + \Kinf(\nu_a, \mustar)}{ (1-\mustar) \cdot \Kinf(\nu_a, \mustar)}$
\begin{align*}
    \E[N^T_a] &\leq \frac{\log T}{\Kinf^{(m)}(\nu^{(m)}_a, \mustar)} + \cO\left( \log^{9/10}(T) \right) \\
    &\leq \frac{\log T}{\Kinf(\nu_a, \mustar)} \cdot \frac{1}{1 - [\Kinf(\nu_a, \mustar) \cdot( m(1-\mustar) - 1) ]^{-1})}  + \cO\left( \log^{9/10}(T) \right).
\end{align*}
Finally, for $x \in (0, 1/2)$ we have $1/(1-x) \leq 1 + 2x$, thus for $m > \max_{a \not = a^\star} \frac{2 + \Kinf(\nu_a, \mustar)}{ (1-\mustar) \cdot \Kinf(\nu_a, \mustar)} $
\[
    \E[N^T_a] \leq  \frac{\log T}{\Kinf(\nu_a, \mustar)} + \frac{2 \log T}{ (\Kinf(\nu_a, \mustar))^2} \cdot\ \frac{1}{m \cdot (1-\mustar) - 1}  + \cO\left( \log^{9/10}(T) \right).
\]
Taking $m \geq \log(T)$ for $T$ large enough we conclude the statement. Notably, for $T$ large enough all conditions on $m$ are satisfied.

\subsection{Proof of Proposition~\ref{prop:concentration_term}-\ref{prop:pre_convergence_term}}
In this section we gather the
proofs of Proposition~\ref{prop:concentration_term}, Proposition~\ref{prop:post_convergence_term} and Proposition~\ref{prop:pre_convergence_term}.

\subsubsection{Proof of Propositions~\ref{prop:concentration_term}}
    By union bound we have
    \[
        \mathrm{Conc}(\varepsilon) \leq \sum_{t=1}^T \P[\lnot \cE^{\conc,1}_\varepsilon(t-1,a)] + \sum_{t=1}^T \P[\lnot \cE^{\conc,2}_\varepsilon(t-1)] + \sum_{t=1}^T \P[\lnot \cE^{\conc,3}_\varepsilon(t-1)].
    \]
    Let us start from the bound on the first term. First, we notice that the distribution of $\alpha^t_a$ depend only on $N^t(a)$ and the distribution $p_a$. Thus, if we fix $N^t(a) = n$ for an arm $a$, we fix the distribution of $\alpha^t_a$ for any arm under this condition. We will call the corresponding random variable as $\alpha^{[n]}_a$.
    By a union bound we have
    \[
        \sum_{t=1}^T \P[\lnot \cE^{\conc,1}_\varepsilon(t-1,a)] \leq  1 + \sum_{n=1}^T \E\left[ \sum_{t=0}^{T-1} \ind\{ \lnot \widetilde{\cE^{\conc}_\varepsilon}(N^t_a, a), N^t_a=n \} \right],
    \]
    where 
    \[
        \widetilde{\cE^{\conc,1}_\varepsilon}(n, a) = \left\{ (n+\ualpha^0-1) \Kinf(\up^{-}(\alpha^{[n]}_a), \mustar - \varepsilon, f) \geq n (\Kinf(p_a, \mustar - \varepsilon, f) - \teps(n))  \right\}.
    \]
    By Lemma~\ref{lem:kinf_concentration} for our choice of $\teps(n)$ we have
    \[
        \sum_{t=1}^T \P[\lnot \cE^{\conc,1}_\varepsilon(t-1,a)] \leq 1 + \sum_{n=1}^T \P[\lnot \widetilde{\cE^{\conc,1}_\varepsilon}(n, a) ] \leq 1 + \sum_{n=1}^T \frac{1}{n^2} \leq 1 + \frac{ \pi^2}{6}.
    \]
    For the second term we have exactly the same argument and we omit it. For the third term we use exactly the same trick for changing summation over $t$ to summation over $n$
    \[
        \sum_{t=1}^T \P[\lnot \cE^{\conc,3}_\varepsilon(t-1)] \leq 1 + \sum_{n=1}^T \P[\lnot \widetilde{\cE^{\conc,3}_\varepsilon}(n)],
    \]
    where 
    \[
        \widetilde{\cE^{\conc,3}_\varepsilon}(n) = \left\{ (n+\ualpha^0-1) \Kinf(\up^{+}(\alpha^{[n]}_{a^\star}), \mustar, f) \leq 3\log(2n+1) + (\ualpha^0-1) \log\left( \frac{1}{1-\mustar} \right)  \right\}.
    \]
    Applying Lemma~\ref{lem:kinf_deviation} we obtain
    \[
        \sum_{t=1}^T \P[\lnot \cE^{\conc,3}_\varepsilon(t-1)] \leq 1 + \rme \sum_{n=1}^T \frac{1}{(2n+1)^2} \leq 1 + \frac{\rme \pi^2}{8}.
    \]

\subsubsection{Proof of Proposition~\ref{prop:post_convergence_term}}

\begin{proof}
    Define a constant $n_0$ that will be specified later. Then we have the following
    \begin{align*}
        \mathrm{PostCV}(\varepsilon) &\leq n_0  + \E\left[\sum_{t=1}^T \ind\{ A^t = a, w^t_{a} f \geq \mustar - \varepsilon, \cE^{\conc,1}_\varepsilon(t-1, a), N^{t-1}_a \geq n_0 \} \right] \\
        &\leq n_0 + \sum_{t=0}^{T-1} \sum_{n=n_0}^T \P\left[ w^{t+1}_a f \geq \mustar - \varepsilon , N^{t}_a = n,  \cE^{\conc,1}_\varepsilon(t,a)  \right],
    \end{align*}
    Next we notice that $N^{t}_a$ and $\ind\{\cE^{\conc,1}_\varepsilon(t,a)\}$ are functions of $\alpha^{t}_a$,    thus, by the tower property of conditional expectation
    \[
        \P\left[ w^{t+1}_a f \geq \mustar - \varepsilon , N^{t}_a = n,  \cE^{\conc,1}_\varepsilon(t,a)  \right] = \E\left[ \P[w^{t+1}_a f \geq \mustar - \varepsilon | \alpha^{t}_a] \ind\{ N^{t}_a = n,  \cE^{\conc,1}_\varepsilon(t,a)\}  \right],
    \]
    where we can assume that $\alpha^{t}_a$ satisfied the definition of $\cE^{\conc,1}_\varepsilon(t,a)$ and $N^{t}_a = n$. 

    Next we notice that
    \[
        \P[w^{t+1}_a f \geq \mustar - \varepsilon | \alpha^t_a] = \P_{w \sim \Dir(\alpha^t_a)}[ w f \geq \mustar - \varepsilon].
    \]
    and we automatically have $\ualpha = n + \ualpha^0$. Next, we notice that if $\up^{(-)}(\alpha^t_a) f > \mustar - \varepsilon$, then the required probability is bounded by $1$. Otherwise we can apply Theorem~\ref{thm:bound_dbc}
    \[
        \P_{w \sim \Dir(\alpha^t_a)}[ wf \geq \mustar - \varepsilon] \leq 2 \P_{g \sim \cN(0,1)}\left[ g \geq \sqrt{2 (n + \ualpha^0-1) \Kinf(\up^{-}(\alpha^t_a), \mustar - \varepsilon, f)} \right].
    \]
    Since $\Kinf$ is non-negative, this upper bound also holds if $\up^{(-)}(\alpha^t_a) f > \mustar - \varepsilon$. Thus, on event $\cE^{\conc,1}_{\varepsilon}(t,a)$ for $N^t_a = n$ we have
    \begin{align*}
        &\P_{g \sim \cN(0,1)}\left[ g \geq \sqrt{2 (n + \ualpha^0-1) \Kinf(\up^{-}(\alpha^t_a), \mustar - \varepsilon, f)}  \right] \\
        &\qquad\qquad\leq \P_{g \sim \cN(0,1)}\left[ g \geq \sqrt{2n (\Kinf(p_a, \mustar - \varepsilon, f) - \teps(n))  }  \right].
    \end{align*}
    Overall, we obtain for any $t = 0,\ldots,T-1$
    \begin{align*}
        \P\left[ w^{t+1}_a u \geq \mustar - \varepsilon |  \alpha^t_a  \right] & \ind\{N^t_a=n, \cE^{\conc,1}_\varepsilon(t,a) \} \\ 
        &\leq 2\P_{g \sim \cN(0,1)}\left[ g \geq \sqrt{2n (\Kinf(p(a), \mustar - \varepsilon, f) - \teps(n))  }  \right] \ind\{N^t_a=n\}.
    \end{align*}
    The right-hand side of the probability strictly increasing in $n$ since $n = \Omega(1)$ we can guarantee $\Kinf(p(a), \mustar - \varepsilon, f) - \teps(n) \geq 1/2 \Kinf(p(a), \mustar - \varepsilon, f)$.
    Therefore we can obtain a uniform upper bound for all $n \geq n_0$ that implies
    \[
        \mathrm{PostCV}(\varepsilon) \leq n_0 + 2T \cdot \P_{g \sim \cN(0,1)}\left[ g \geq \sqrt{2 n_0 (\Kinf(p, \mustar - \varepsilon, f) - \teps(n_0))  } \right].
    \]
    Next, we can use the following upper bound on tails of a normal law, for any $x > 0$ it holds (see e.g. \cite{vershynin2018high})
    \[
         \P_{g \sim \cN(0,1)}\left[ g \geq \sqrt{2x} \right] \leq \frac{\exp(-x)}{2 \sqrt{\pi x}}.
    \]
    Thus, taking $n_0 = (1 + \varepsilon') \log T / \Kinf(p, \mustar - \varepsilon, u)$ for any $\varepsilon' > 0$ we have
    \[
        \mathrm{PostCV}(\varepsilon) \leq \frac{(1 + \varepsilon')\log T}{\Kinf(p_a, \mustar - \varepsilon, f)}  + 2T \cdot \frac{\exp\left( -(1+ \varepsilon')\log T + \cO\left( \sqrt{\log\log(T) \cdot \log(T)} \right) \right)}{\sqrt{2 \pi (1 + \varepsilon') \log(T) }}
    \]
    In particular, we can take $\varepsilon' = \log\log(T) / \sqrt{\log(T)}$ to make the last term vanishing for $T$ large enough. Finally, by using Lemma~11 by \cite{garivier2018kl} we have
    \[
        \mathrm{PostCV}(\varepsilon) \leq \frac{\log T}{\Kinf(p_a, \mustar, f)} + \cO\left( \varepsilon \cdot \log T + \sqrt{\log(T)} \cdot \log\log(T) \right).
    \]
\end{proof}

\subsubsection{Proof of Proposition~\ref{prop:pre_convergence_term}}

\begin{proof}
We denote by $\cF^{t}$ the information available at the end of round $t$. First notice, introducing the sub-optimal arm with the largest index $\tA^t = \max_{a'\neq\astar} w_{a'}^t f$, it holds by independence of the posterior samples
\begin{align*}
    \P[A^t = \astar, w_a^t f \leq \mustar-\varepsilon |\cF^{t-1}] &\geq \P[\tA^t = a, w_a^t f \leq \mustar-\varepsilon, w_{\astar}^t f > \mustar-\varepsilon  |\cF^{t-1}]\\
    &= \big(1-\P[w_{\astar}^t f > \mustar-\varepsilon|\cF^{t-1}]\big) \P[\tA^t = a, w_a^t f \leq \mustar-\varepsilon |\cF^{t-1}]
\end{align*}
and 
\begin{align*}
    \P[A^t = a, w_a^t f \leq \mustar-\varepsilon |\cF^{t-1}] &\leq \P[\tA^t = a, w_a^t f \leq \mustar-\varepsilon, w_{\astar}^t f \leq \mustar-\varepsilon  |\cF^{t-1}]\\
    &= \P[\tA^t = a, w_{a}^t f \leq \mustar-\varepsilon|\cF_{t-1}]\P[w_{\astar}^t f \leq  \mustar-\varepsilon|\cF^{t-1}]\,.
\end{align*}
From these two inequalities we deduce that 
\begin{align*}
    \P[A^t = a, w_a^t f \leq \mustar-\varepsilon |\cF^{t-1}] \leq \frac{\P[w_{\astar}^t f \leq  \mustar-\varepsilon|\cF^{t-1}]}{1-\P[w_{\astar}^t f \leq  \mustar-\varepsilon|\cF_{t-1}]}  \P[A^t = \astar, w_a^t f \leq \mustar-\varepsilon |\cF^{t-1}]\,.
\end{align*}
By conditioning on $\cF^{t-1}$ and using the previous inequality, then switching from global time to counts we obtain 
\begin{align*}
\mathrm{PreCV}(\varepsilon) &= \E\left[\sum_{t=1}^T \ind\{ A^t = a, w^t_{a} f \leq \mustar - \varepsilon, \cE^\conc_\varepsilon(t-1, a) \} \right]\\
&\leq \E\left[ \sum_{t=1}^T
\frac{\P[w_{\astar}^t f \leq  \mustar-\varepsilon|\cF^{t-1}]}{1-\P[w_{\astar}^t f \leq  \mustar-\varepsilon|\cF_{t-1}]}  \ind\{A^t = \astar, \cE^{\conc,2}_\varepsilon(t-1) \cap \cE^{\conc,3}_\varepsilon(t-1)\}
\right]\\
&\leq 
\E\left[ \sum_{n=0}^{T-1}
 \frac{\P_{w \sim \Dir(\alpha^{(n)}_{\astar})}\left[ w f \leq \mustar - \varepsilon\right]}{1 - \P_{w \sim \Dir(\alpha^{(n)}_{\astar})}\left[ w f \leq \mustar - \varepsilon\right]} \ind\{\widetilde{\cE^{\conc,2}_\varepsilon}(n) \cap \widetilde{\cE^{\conc,3}_\varepsilon}(n)\}
\right]
\end{align*}
where $\alpha^{(n)}_{\astar}$ is the parameter of the the posterior of the optimal arm after $n$ observations and the events $\widetilde{\cE^{\conc,2}_\varepsilon}(n)$, $\widetilde{\cE^{\conc,3}_\varepsilon}(n)$ defined in the proof of Proposition~\ref{prop:concentration_term} are such that 
  \[
        \{ \cE^{\conc,2}_\varepsilon(\tau_n) \cap \cE^{\conc,3}_\varepsilon(\tau_n) \} = \{\widetilde{\cE^{\conc,2}_\varepsilon}(n) \cap \widetilde{\cE^{\conc,3}_\varepsilon}(n)\},
    \]
with $\tau_n$ the first instant such that $N_{\astar}^{\tau_n} = n$.

    To work with this expectation properly, we separate it into two cases
    \[
        \termA = \sum_{n=0}^T \E\left[  \frac{\P_{w \sim \Dir(\alpha^{(n)}_{\astar})}\left[ w f \leq \mustar - \varepsilon \right]}{\P_{w \sim \Dir(\alpha^{(n)}_{\astar})}\left[ w f > \mustar - \varepsilon \right]} \cdot \ind\{\widetilde{\cE^{\conc,3}_\varepsilon}(n), \,\up(\alpha^{(n)}_{\astar}) \cdot f \leq \mustar - \varepsilon \} \right],
    \]
    and
    \[
        \termB = \sum_{n=0}^T \E\left[  \frac{\P_{w \sim \Dir(\alpha^{(n)}_{\astar})}\left[ w f \leq \mustar - \varepsilon \right]}{\P_{w \sim \Dir(\alpha^{(n)}_{\astar})}\left[ w f > \mustar - \varepsilon \right]} \cdot \ind\{\widetilde{\cE^{\conc,2}_\varepsilon}(n),\,\up(\alpha^{(n)}_{\astar}) \cdot f > \mustar - \varepsilon \} \right].
    \]
    Our goal is to bound these two terms separately.
    \paragraph*{Term $\termA$} For this term we have
    \[
        \P_{w \sim \Dir(\alpha^{(n)}_{\astar})}\left[ w f \leq \mustar - \varepsilon \right] \leq 1
    \]
    and by Theorem~\ref{thm:bound_dbc},
    \begin{align*}
        \P_{w \sim \Dir(\alpha^{(n)}_{\astar})}\left[ w f > \mustar - \varepsilon\} \right] &\geq \frac{1}{2} \P_{g \sim \cN(0,1)}\left[ g \geq \sqrt{2(n+\ualpha^0-1) \Kinf(\up^{+}(\alpha^{(n)}_{\astar}), \mustar - \varepsilon, f) }\right] \\
        &\geq \frac{1}{2} \P_{g \sim \cN(0,1)}\left[ g \geq \sqrt{2(n+\ualpha^0-1) \Kinf(\up^{+}(\alpha^{(n)}_{\astar}), \mustar, f) }\right].
    \end{align*}
    By the definition of event $\widetilde{\cE^{\conc,3}_\varepsilon}(n)$ we have 
    \begin{align*}
        (n+\ualpha^0-1)\Kinf(\up^{+}(\alpha^{(n)}_{\astar}), \mustar, f) &\leq 3\log(2n+1) + (\ualpha^0-1) \log\left(\frac{1}{1-\mustar}\right).
    \end{align*}
    Notice that for $n\geq 1$ we have $3\log(2n+1) \geq 1$, thus the lower bound on Gaussian tails implies
    \[
        \P_{w \sim \Dir(\alpha^{(n)}_{\astar})}\left[ w f > \mustar - \varepsilon\} \right]  \geq \frac{(2n+1)^{-3} \cdot (1-\mustar)^{\ualpha^0-1}}{4\sqrt{2} \sqrt{ 3\log(2n+1)+  (\ualpha^0-1) \log\left(\frac{1}{1-\mustar}\right)} }.
    \]
    Therefore, we obtain the following bound
    \begin{align*}
        \termA &\leq \sum_{n=0}^T \cO(n^3 \log^{1/2}(n)) \E\left[ \ind\{\up(\alpha_{\astar}^{(n)}) \cdot f \leq \mustar - \varepsilon \} \right]  \\
        &\leq \sum_{n=0}^T \cO(n^3 \log^{1/2}(n)) \P_{\alpha \sim \mathrm{Mult}(n, p_{a^\star})}\left[ \up(\alpha + \alpha^0) \cdot f \leq \mustar - \varepsilon \right].
    \end{align*}
    To upper bound other probability we use Hoeffding inequality
    \begin{align*}
        \P_{\alpha \sim \mathrm{Mult}(n, p_{a^\star})}&\left[ \up(\alpha + \alpha^0) \cdot u \leq \mustar - \varepsilon  \right] \\
        &\leq \P_{\alpha \sim \mathrm{Mult}(n, p_{a^\star})}\left[ \vert \langle \alpha,u \rangle - n \mustar \vert \geq n \varepsilon - \ualpha^0(\mustar - \langle \alpha^0, u\rangle - \varepsilon )   \right] \\
        &\leq \exp\left(-2 n \left(\varepsilon - \frac{\ualpha^0}{n}(\mustar - \langle \alpha^0, u\rangle - \varepsilon )\right)^2 \right).
    \end{align*}
    Define a truncation point $n_0' = 4 \ualpha^0 \mustar / \varepsilon$. For $n \geq n'_0$ we have
    \[
        \P_{\alpha \sim \mathrm{Mult}(n, p_{a^\star})}\left[ \up(\alpha + \alpha^0) \cdot u \leq \mustar - \varepsilon  \right]\leq \exp\left(\frac{-n \varepsilon^2}{2}\right).
    \]
    And for $n \leq n'_0$ we have a trivial upper bound by $1$. Therefore we obtain
    \[
        \termA \leq \cO\left( \frac{\log^{1/2}(1/\varepsilon)}{\varepsilon^4} \right) + \cO\left(  \sum_{n=n_0'}^T n^3 \log^{1/2}(n) \rme^{-n \varepsilon^2/2} \right).
    \]
    Next, we want to choose next truncation $n_0''$ such that
    \[
        (n_0'')^{3.5} \leq \exp(n_0'' \varepsilon^2 / 4) \iff  14 \log(n_0'') \leq n_0'' \cdot \varepsilon^2.
    \]
    By inequality $\log(x) \leq \beta \cdot x^{1/\beta} $ for any $x>0$ and $\beta > 0$ we can obtain the value of $n_0''$ that indeed satisfy the inequality above $n_0'' =  (14\beta)^{\beta / (\beta - 1)} \cdot \varepsilon^{-2\beta/(\beta - 1)}$ for any constant $\beta > 0$. Thus, we have
    \[
        \termA  \leq \cO\left( \frac{\log^{1/2}(1/\varepsilon)}{\varepsilon^4} \right) + \cO\left( \varepsilon^{-4\beta / (\beta - 1)} \log^{1/2}(\varepsilon) \right) + \cO\left(  \sum_{n=n_0''}^T \rme^{-n \varepsilon^2/4} \right).
    \]
    Taking $\beta = 17/9$ we can see that the second term is dominating in $\varepsilon$ and, moreover, using an inequality $\log^{1/2}(1/\varepsilon)\leq 2\varepsilon^{1/2}$ we have
    \[
        \termA = \cO\left( \varepsilon^{-9} \right).
    \]

    \paragraph*{Term $\termB$}
    For analysis of this term we again start from the probabilities under expectation. First, we have by Theorem~\ref{thm:bound_dbc},
    \[
         \P_{w \sim \Dir(\alpha_{\astar}^{(n)})}\left[ w f > \mustar - \varepsilon \right] \geq  \P_{w \sim \Dir(\alpha_{\astar}^{(n)}))}\left[ w f > \up^+(\alpha^\tau) \cdot f \right] \geq \frac{1}{2}
    \]
    and additionally we have still by Theorem~\ref{thm:bound_dbc}
    \begin{align*}
        \P_{w \sim \Dir(\alpha_{\astar}^{(n)})}\left[ w f \leq \mustar - \varepsilon \right] &= \P_{w \sim \Dir(\alpha_{\astar}^{(n)})}\left[ w (1-f) \geq 1- \mustar + \varepsilon \right] \\
        &\leq \frac{3}{2} \P_{g \sim \cN(0,1)} \left[g \geq \sqrt{2(n+\ualpha^0-1) \Kinf(\up^-(\alpha_{\astar}^{(n)})), 1-\mustar + \varepsilon, 1-f)  } \right].
    \end{align*}
    By definition of event $\widetilde{\cE^{\conc,2}_\varepsilon}(n)$ and Lemma~11 by \cite{garivier2018kl} we have
    \[
        \P_{w \sim \Dir(\alpha_{\astar}^{(n)})}\left[ w f \leq \mustar - \varepsilon \right] \leq \frac{3}{2} \P_{g \sim \cN(0,1)}\left[ g \geq \sqrt{2n (2\varepsilon^2 - \teps(n)) } \right].
    \]
    Define truncation point $\tilde n_0$ such that $\varepsilon^2 \geq \teps(\tilde n_0)$ that is automatically satisfied by choosing $n_0 = \cO(\varepsilon^{-5})$. Thus by an upper bound for tails of normal distribution we obtain
    \[
        \termB \leq \cO(\varepsilon^{-5}) + \sum_{n=\tilde{n}_0}^T \frac{\exp(-n \varepsilon^2)}{\sqrt{n \varepsilon^2}}  = \cO\left( \varepsilon^{-5} \right).
    \]
    Overall, we obtain
    \[
        \mathrm{PreCV}(\varepsilon) = \cO\left( \varepsilon^{-9} \right).
    \]
    
\end{proof}

\section{Technical Lemmas}


First we start from proving an auxiliary result similar to Lemma D.14 of \cite{tiapkin2022dirichlet}
\begin{lemma}\label{lem:ub_lambda_star}
       Let $f \colon \{0,\ldots,m\} \to [0, \ub]$ be a function such that $f(0) = 0$ and $f(m) = b$. Let $(\alpha_0, \ldots, \alpha_m) \in \R^{m+1}_+$ be a vector such that $\alpha_m > 0$ and define $\ualpha = \sum_{i=0}^m \alpha_i$. Define a distribution $\up \in \simplex_m$ such that $\up(i) = \alpha_i / \ualpha$ and consider a real number $\mu \in (\up f, \ub)$  Then
       \[
            1 - \lambda^\star(\up, \mu, f) \cdot (\ub - \mu) \geq \up(m),
       \]
       where $ \lambda^\star(\up, \mu, f) $ is defined as an optimal dual variables to the variational formula
       \[
         \lambda^\star(p, \mu, f) = \argmax_{\lambda \in [0, 1/(\ub - \mu)]} \E_{X \sim p} \left[ \log\left(1 + \lambda \cdot (f(X) - \mu) \right) \right].
       \]
\end{lemma}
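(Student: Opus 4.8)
The plan is to exhibit the moment‑projection of $\up$ onto the set $\{q\in\simplex_m : qf\ge\mu\}$ in closed form and then simply read the bound off the fact that this projection is a probability vector. Recall from Proposition~\ref{prop:asymptotics_integral} (and its use in Lemma~\ref{lem:kinf_bounds}) that for $\mu\in(\up f,\ub)$ the relevant $\lambda^\star$ is the maximiser over $[0,1/(\ub-\mu)]$ of $\phi(\lambda)=\E_{X\sim\up}\!\big[\log(1-\lambda(f(X)-\mu))\big]$, which is strictly concave there because
\[
\phi''(\lambda)=-\,\E_{X\sim\up}\!\left[\frac{(f(X)-\mu)^2}{(1-\lambda(f(X)-\mu))^2}\right]<0,
\]
the strictness coming from $\up(m)=\alpha_m/\ualpha>0$ and $f(m)=\ub\neq\mu$. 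First I would check that $\lambda^\star$ is an \emph{interior} point of the interval. Since $\phi'(0)=\mu-\up f>0$ the maximiser is not $0$, and since $\up(m)>0$ the summand with $X=m$ drives $\phi(\lambda)\to-\infty$ as $\lambda\uparrow 1/(\ub-\mu)$, so the maximiser is not the right endpoint either. Hence $\lambda^\star\in(0,1/(\ub-\mu))$, and in particular $1-\lambda^\star(\ub-\mu)>0$, so the left‑hand side of the claimed inequality is already known to be positive.

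Next I would use the stationarity condition $\phi'(\lambda^\star)=0$, i.e.
\[
\E_{X\sim\up}\!\left[\frac{f(X)-\mu}{1-\lambda^\star(f(X)-\mu)}\right]=0 .
\]
Using the identity $\dfrac{1}{1-\lambda^\star(f(j)-\mu)}=1+\lambda^\star\,\dfrac{f(j)-\mu}{1-\lambda^\star(f(j)-\mu)}$ and summing against $\up$ gives
\[
\sum_{j=0}^m \frac{\up(j)}{1-\lambda^\star(f(j)-\mu)}=1+\lambda^\star\cdot 0=1 .
\]
Every summand here is nonnegative: if $f(j)\le\mu$ the denominator is $\ge 1$, while if $f(j)>\mu$ then $1-\lambda^\star(f(j)-\mu)\ge 1-\lambda^\star(\ub-\mu)>0$ by the previous step. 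Therefore each term is bounded by the whole sum, and taking the $j=m$ term,
\[
\frac{\up(m)}{1-\lambda^\star(\ub-\mu)}\ \le\ \sum_{j=0}^m \frac{\up(j)}{1-\lambda^\star(f(j)-\mu)}\ =\ 1 ,
\]
which rearranges to $\up(m)\le 1-\lambda^\star(\ub-\mu)$. (Equivalently: $q^\star(j):=\up(j)/(1-\lambda^\star(f(j)-\mu))$ is the $M$‑projection of $\up$ onto $\{qf\ge\mu\}$, and the bound is nothing but $q^\star(m)\le 1$.)

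There is no substantial obstacle. The two points that need a little care are (i) verifying that $\lambda^\star$ is strictly interior to $[0,1/(\ub-\mu)]$ — this is precisely where the hypothesis $\alpha_m>0$ is used, and it is also what guarantees the expression $1-\lambda^\star(\ub-\mu)$ is a genuine positive quantity rather than merely nonnegative — and (ii) the sign bookkeeping showing that all the terms $\up(j)/(1-\lambda^\star(f(j)-\mu))$ are nonnegative so that each is dominated by their sum. Everything else is the one‑line manipulation of the first‑order condition.
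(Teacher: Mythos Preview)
Your proof is correct and in fact cleaner than the paper's. Both start from the first–order condition $\E_{X\sim\up}[(f(X)-\mu)/(1-\lambda^\star(f(X)-\mu))]=0$, but then diverge. The paper separates the $j=m$ summand from the rest, rewrites the remainder as $(1-\up(m))\,\E_{X\sim\tp}[w(f(X),\mu)]$ for the conditional law $\tp$ on $\{0,\dots,m-1\}$ and the convex function $w(x,u)=(x-u)/(1-\lambda^\star(x-u))$, applies Jensen to that expectation, and then performs several lines of rearrangement to isolate $1/(\ub-\mu)-\lambda^\star$. Your route avoids Jensen entirely: the algebraic identity turns the first–order condition into $\sum_j \up(j)/(1-\lambda^\star(f(j)-\mu))=1$, which is precisely the statement that the $M$--projection $q^\star$ is a probability vector, and the claimed inequality is then just $q^\star(m)\le 1$. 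This is both shorter and more transparent, since it makes explicit the interpretation in terms of $q^\star$; the paper's Jensen step and subsequent algebra are not needed. The only ingredient you use that the paper does not spell out is the interiority argument for $\lambda^\star$ (needed to ensure all denominators are positive), but this is standard and exactly where the hypothesis $\alpha_m>0$ enters in both proofs.
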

\begin{proof}
    Under the conditions $\up f < \mu < \ub,$ and $\up(m) > 0$ the value $\lambda^\star = \lambda^\star(\up, \mu, f)$ satisfies the following equation
    \begin{equation}\label{eq:expectation|lem:ub_lambda_star}
        \E\left[ \frac{f(X) - \mu}{1 - \lambda^\star \cdot (f(X) - \mu)} \right] = \up(m) \frac{\ub - \mu}{1 - \lambda^\star \cdot (\ub - \mu)} + \sum_{j=0}^{m-1} \up(j) \frac{f(j) - \mu}{1 - \lambda^\star \cdot (f(j) - \mu)} = 0.
    \end{equation}
    Define a distribution $\tp$ with $\tp(i) = \up(i) / (1 - \up(m))$ for $i \in \{0,\ldots,m-1\}$ and $\tp(m) = 0$. Then the expectation in \eqref{eq:expectation|lem:ub_lambda_star} can be written as
    \[
        \up(m) \frac{\ub - \mu}{1 - \lambda^\star \cdot (\ub - \mu)} + (1 - \up(m)) \E_{X \sim \tp}\left[\frac{f(X) - \mu}{1 - \lambda^\star \cdot (f(X) - \mu)}\right] = 0.
    \]
    Define a function $w(x,u) = \frac{x-u}{1 - \lambda^\star \cdot (x-u)}$, which is convex  in $x.$ By the Jensen inequality, 
    \[
        \E_{X \sim \tp}\left[\frac{f(X) - \mu}{1 - \lambda^\star \cdot (f(X) - \mu)}\right] \geq \frac{\tp f - \mu}{1 - \lambda^\star \cdot (\tp f - \mu)}.
    \]
 Hence
    \begin{align*}
        \up(m) \frac{\ub - \mu}{1 - \lambda^\star \cdot (\ub - \mu)} &\leq - (1 - \up(m)) \frac{\tp f - \mu}{1 - \lambda^\star \cdot (\tp f - \mu) } =  (1 - \up(m))\frac{\mu - \tp f}{1 + \lambda^\star \cdot (\mu - \tp f) }.
    \end{align*}
    Notice that $\mu \geq \up_n f \geq \tp_n f$, therefore we can rearrange terms as follows
    \begin{align*}
        \frac{1}{\up(m)} \left( \frac{1}{\ub-\mu} - \lambda^\star \right) &\geq \frac{1}{1 - \up(m)} \left( \frac{1}{\mu - \tp f} + \lambda^\star \right) \\
        &= \frac{1}{1 - \up(m)}\left( \frac{1}{\mu - \tp f} + \frac{1}{\ub-\mu}\right) - \frac{1}{1 - \up(m)} \left(\frac{1}{\ub-\mu} - \lambda^\star \right).
    \end{align*}
    As a result,
    \[
        \left( \frac{1}{\ub-\mu} - \lambda^\star \right) \geq \up(m) \cdot \frac{\ub - \hp f}{(\ub-\mu)(\mu - \hp f)} \geq \up(m)\frac{1}{\ub-\mu}.
    \]
\end{proof}

Next we analyze how $\Kinf$ changes with  small changes in  the first argument (distribution). For any non-negative vector $s \in \R^{m+1}_+$, we define $\up_s $ as $\up_{s}(i) = s_i / (\sum_{i=0}^m s_i)$. 

\begin{lemma}\label{lem:distribution_shift}
    Let $f \colon \{0,\ldots,m\} \to [0, \ub]$ be a function such that $f(0) = 0$ and $f(m) = b$. Let $\alpha= (\alpha_0, \ldots, \alpha_m) \in \R^{m+1}_+$ and $\beta = (\beta_0, \ldots, \beta_m) \in \R^{m+1}_+$ be non-negative vectors such that $\beta_m + \alpha_m \geq 1.$ Set $\ualpha = \sum_{i=0}^m \alpha_i$ and $\ubeta = \sum_{i=0}^m \beta_i,$ 
    then the following bounds hold for any $\mu \in (0, b),$
    \[
        \Kinf(\up_\alpha, \mu, f) - \frac{3 \ubeta}{\ualpha} \log\left( \frac{\ub  (\ualpha + \ubeta)}{\ub - \mu} \right) \leq \Kinf(\up_{\alpha + \beta}, \mu, f)\leq \frac{\ualpha \cdot  \Kinf(\up_\alpha, \mu, f) + \ubeta \cdot \log\left( \frac{\ub}{\ub - \mu} \right)}{\ualpha + \ubeta}.
    \]
\end{lemma}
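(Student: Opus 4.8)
The plan is to run everything through the Honda--Takemura variational formula (Theorem~6 of \cite{honda2010asymptotically}),
\[
  \Kinf(p,\mu,f) = \max_{\lambda\in[0,1/(b-\mu)]} g_p(\lambda), \qquad g_p(\lambda) := \E_{X\sim p}\bigl[\log\bigl(1-\lambda(f(X)-\mu)\bigr)\bigr],
\]
in which $p\mapsto g_p(\lambda)$ is \emph{affine}. I will write $t = \ualpha/(\ualpha+\ubeta)\in(0,1)$, so that $\up_{\alpha+\beta}=t\,\up_\alpha+(1-t)\,\up_\beta$, and record two consequences used throughout: (i) $\Kinf(\cdot,\mu,f)$ is convex in $p$, being a pointwise supremum of affine functions; (ii) since $f\ge0$ and $\lambda\le1/(b-\mu)$ force $1-\lambda(f(X)-\mu)\le1+\lambda\mu\le b/(b-\mu)$, one has the uniform bound $\Kinf(p,\mu,f)\le\log\bigl(b/(b-\mu)\bigr)$ for every $p\in\simplex_m$.

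\textbf{Upper bound.} This is immediate: by (i) and (ii),
\[
  \Kinf(\up_{\alpha+\beta},\mu,f)\le t\,\Kinf(\up_\alpha,\mu,f)+(1-t)\,\Kinf(\up_\beta,\mu,f)\le t\,\Kinf(\up_\alpha,\mu,f)+(1-t)\log\frac{b}{b-\mu},
\]
which is the claimed right-hand inequality after inserting $t=\ualpha/(\ualpha+\ubeta)$.

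\textbf{Lower bound.} First I would dispose of two easy regimes. If $\mu\le\up_\alpha f$ then $\Kinf(\up_\alpha,\mu,f)=0$ while the right-hand side is non-positive (the logarithm is $\ge0$ since $\ualpha+\ubeta\ge\alpha_m+\beta_m\ge1$), so it follows from $\Kinf(\up_{\alpha+\beta},\mu,f)\ge0$; and if $\ubeta\ge\ualpha/3$ then by (ii) $\Kinf(\up_\alpha,\mu,f)\le\log\tfrac{b}{b-\mu}\le\log\tfrac{b(\ualpha+\ubeta)}{b-\mu}\le\tfrac{3\ubeta}{\ualpha}\log\tfrac{b(\ualpha+\ubeta)}{b-\mu}$, again done. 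In the remaining \emph{small-perturbation} regime ($\mu>\up_\alpha f$, $\ubeta<\ualpha/3$) I would let $\lambda^\star=\lambda^\star(\up_\alpha,\mu,f)$ be the maximiser for $\up_\alpha$ and plug this (sub-optimal for $\up_{\alpha+\beta}$) choice into the variational formula; by affinity in $p$,
\[
  \Kinf(\up_{\alpha+\beta},\mu,f)\ \ge\ g_{\up_{\alpha+\beta}}(\lambda^\star)\ =\ t\,\Kinf(\up_\alpha,\mu,f)+(1-t)\,g_{\up_\beta}(\lambda^\star).
\]
Since $1-\lambda^\star(f(i)-\mu)\ge1-\lambda^\star(b-\mu)$ and this quantity lies in $(0,1]$, we get $g_{\up_\beta}(\lambda^\star)\ge\log\bigl(1-\lambda^\star(b-\mu)\bigr)$, and here the hypothesis enters: in this regime $\alpha_m\ge1-\beta_m>0$, so $\up_\alpha(m)=\alpha_m/\ualpha>0$ and Lemma~\ref{lem:ub_lambda_star} gives $1-\lambda^\star(b-\mu)\ge\alpha_m/\ualpha$. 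When moreover $\alpha_m\ge1$ this is $\ge1/\ualpha$, hence $g_{\up_\beta}(\lambda^\star)\ge-\log\ualpha$; writing $t\,\Kinf(\up_\alpha,\mu,f)=\Kinf(\up_\alpha,\mu,f)-(1-t)\Kinf(\up_\alpha,\mu,f)$, using (ii) and $1-t=\ubeta/(\ualpha+\ubeta)\le\ubeta/\ualpha$ one obtains
\[
  \Kinf(\up_{\alpha+\beta},\mu,f)\ \ge\ \Kinf(\up_\alpha,\mu,f)-\frac{\ubeta}{\ualpha}\Bigl(\log\tfrac{b}{b-\mu}+\log\ualpha\Bigr)\ \ge\ \Kinf(\up_\alpha,\mu,f)-\frac{3\ubeta}{\ualpha}\log\tfrac{b(\ualpha+\ubeta)}{b-\mu}.
\]
For $\alpha_m\in(0,1)$ the quantity $\alpha_m/\ualpha$ may be small, and I would instead use the clipped value $\bar\lambda$ determined by $1-\bar\lambda(b-\mu)=\min\{1-\lambda^\star(b-\mu),\,1/(\ualpha+\ubeta)\}$, which makes $g_{\up_\beta}(\bar\lambda)\ge-\log(\ualpha+\ubeta)$, while the clipping loss is controlled by $\Kinf(\up_\alpha,\mu,f)-g_{\up_\alpha}(\bar\lambda)\le(\lambda^\star-\bar\lambda)\mu\le\mu/\bigl((b-\mu)(\ualpha+\ubeta)\bigr)$ (the integrand of $g_{\up_\alpha}$ increases in $\lambda$ on $\{f<\mu\}$ and decreases on $\{f\ge\mu\}$); the constant $3$ absorbs this residual together with the $\log(b/(b-\mu))$ term.

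\textbf{Main obstacle.} The delicate point is entirely in the lower bound: substituting a fixed $\lambda$ into the $\up_\beta$-part generates $\log\bigl(1-\lambda(b-\mu)\bigr)$, which degenerates to $-\infty$ exactly when the Dirichlet weight concentrates near the top atom $f=b$. Taming it is precisely the role of the assumption $\alpha_m+\beta_m\ge1$ (a guaranteed amount of endpoint mass) together with the clipping device; the case split on $\ubeta/\ualpha$ and on $\alpha_m$, with the generous constant $3$, is what makes the residual terms fit.
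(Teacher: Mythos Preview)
Your upper bound is exactly the paper's argument: convexity of $p\mapsto\Kinf(p,\mu,f)$ as a pointwise supremum of affine maps, plus the uniform ceiling $\log\bigl(b/(b-\mu)\bigr)$. For the lower bound you take a genuinely different route. The paper lets $q^\star$ be the minimiser in $\Kinf(\up_{\alpha+\beta},\mu,f)$, decomposes
\[
\KL(\up_{\alpha+\beta},q^\star)=\sum_i\bigl(\up_{\alpha+\beta}(i)-\up_\alpha(i)\bigr)\log\frac{\up_{\alpha+\beta}(i)}{q^\star(i)}+\sum_i\up_\alpha(i)\log\frac{\up_{\alpha+\beta}(i)}{q^\star(i)},
\]
and bounds each piece via the explicit form $q^\star(i)=\up_{\alpha+\beta}(i)/(1-\lambda^\star(f(i)-\mu))$ with $\lambda^\star=\lambda^\star(\up_{\alpha+\beta},\mu,f)$. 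The crucial point is that Lemma~\ref{lem:ub_lambda_star} is applied to $\up_{\alpha+\beta}$, so that $1-\lambda^\star(b-\mu)\ge\up_{\alpha+\beta}(m)\ge1/(\ualpha+\ubeta)$ follows directly from the hypothesis $\alpha_m+\beta_m\ge1$, with no case split needed.

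Your route instead substitutes $\lambda^\star=\lambda^\star(\up_\alpha,\mu,f)$ into the variational formula for $\up_{\alpha+\beta}$, which forces you to invoke Lemma~\ref{lem:ub_lambda_star} for $\up_\alpha$ rather than $\up_{\alpha+\beta}$. When $\alpha_m\ge1$ this works cleanly (and even yields the constant $1$ in place of $3$), but for $\alpha_m<1$ there is a genuine gap. First, the claim ``$\alpha_m\ge1-\beta_m>0$'' is unjustified: nothing rules out $\beta_m\ge1$, and then $\alpha_m$ may vanish so Lemma~\ref{lem:ub_lambda_star} does not apply to $\up_\alpha$ at all. Second, and more seriously, your clipping bound is too crude to be ``absorbed by the constant $3$''. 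Take $m=1$, $b=1$, $\alpha=(99.5,\,0.5)$, $\beta=(0,\,0.5)$, $\mu=0.99$: then $\lambda^\star-\bar\lambda\approx0.49$ and your first-order bound on the clipping loss, $(\lambda^\star-\bar\lambda)\mu\approx0.48$, exceeds the available slack $\tfrac{2\ubeta}{\ualpha}\log\tfrac{b(\ualpha+\ubeta)}{b-\mu}\approx0.09$ by a factor of five. (The \emph{actual} clipping loss $g_{\up_\alpha}(\lambda^\star)-g_{\up_\alpha}(\bar\lambda)$ is quadratically small near the maximiser and would fit, but establishing that requires a second-order argument you have not supplied.) Minor: your clipping formula should read $\max$, not $\min$.
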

\begin{proof}
    We start from the lower bound.  Let $q^\star$ be a probability measure that attains in the definition of $\Kinf(\up_{\alpha + \beta}, \mu, f),$ that is,
    \[
        q^\star = \argmin_{q \in \simplex_{m}}\{ \KL(\up_{\alpha + \beta}, q)  : qf \geq \mu \}.
    \]
    Then we have
    \begin{align*}
        \Kinf(\up_{\alpha + \beta}, \mu, f) = \KL(\up_{\alpha + \beta}, q^\star) &= \underbrace{\sum_{i: \up_{\alpha + \beta}(i) > 0} (\up_{\alpha + \beta}(i) - \up_\alpha(i)) \log\left( \frac{\up_{\alpha + \beta}(i)}{q^\star(i)} \right)}_{\termA} \\
        &+ \underbrace{\sum_{i: \up_{\alpha + \beta}(i) > 0} \up_\alpha(i)  \log\left( \frac{\up_{\alpha + \beta}(i)}{q^\star(i)} \right)}_{\termB}.
    \end{align*}
    \paragraph*{Term $\termA$}
    Let an analyze the first sum. Since $\up_{\alpha + \beta}(m+1) > 0$, there is a formula for the optimal $q^\star$ that follows from the dual representation of $\Kinf$
    \[
        q^\star(i) = \frac{\up_{\alpha + \beta}(i)}{1 -\lambda^\star \cdot (f(i) - \mu)},
    \]
    where $\lambda^\star \in [0, 1/(\ub - \mu))$ is a solution to the variational problem for $\Kinf(\up_{\alpha + \beta}, \mu, f)$. Thus, we have
    \[
        \sum_{i: \up_{\alpha + \beta}(i) > 0} (\up_{\alpha + \beta}(i) - \up_\alpha(i)) \log\left( \frac{\up_{\alpha + \beta}(i)}{q^\star(i)} \right) \geq - \norm{\up_{\alpha + \beta} - \up_\alpha}_1 \cdot \norm{\log\left(1 - \lambda^\star \cdot (f - \mu) \right) }_\infty.
    \]
    For the first multiplier we have the following bound
    \[
         \norm{\up_{\alpha + \beta} - \up_{\alpha}}_1 = \left\Vert \frac{\ualpha}{\ualpha + \ubeta}\up_\alpha + \frac{1}{\ualpha + \ubeta} \beta - \up_{\alpha} \right\Vert_1 = \frac{1}{\ualpha + \ubeta} \left\Vert \beta - \ubeta \cdot\up_{\alpha} \right\Vert_1 \leq \frac{2 \ubeta }{ \ualpha + \ubeta}.
    \]
   To analyze the second multiplier, we notice that the maximum could be attained only for $i=0$ or $i=m$ since logarithm is a monotone function and $\lambda^\star$ is fixed. Thus
    \[
        \max_{i\in\{0,\ldots,m\}} \left\vert \log\left(1 - \lambda^\star \cdot (f(i) - \mu) \right) \right\vert = \max\left\{ \log(1 + \lambda^\star \mu), \log\left(\frac{1}{1 - \lambda^\star(\ub - \mu)}\right) \right\}.
    \]
By applying Lemma~\ref{lem:ub_lambda_star}, we get
    \[
        \max\left\{ \log(1 + \lambda^\star \mu), \log\left(\frac{1}{1 - \lambda^\star(\ub - \mu)}\right) \right\} \leq \max\left\{ \log\left(\frac{\ub}{\ub - \mu}\right), \log\left(\frac{1}{\up_{\alpha + \beta}(m)}\right) \right\}.
    \]
Therefore since $\alpha_m + \beta_m \geq 1,$ we have $\up_n(m) \geq 1/(\ualpha + \ubeta)$ and thus
    \[
        \termA \geq - \frac{2 \ubeta }{ \ualpha + \ubeta} \cdot \max\left\{ \log\left( \frac{\ub}{\ub -\mu} \right), \log( \ualpha + \ubeta )\right\} \geq  - \frac{2 \ubeta }{\ualpha + \ubeta} \log \left(\frac{\ub \cdot (\ualpha + \ubeta)}{\ub - \mu}\right).
    \]

    \paragraph*{Term $\termB$}
    Next we analyze the second sum. Since $\supp(\up_\alpha) \subseteq \supp(\up_{\alpha + \beta}) = \supp(q^\star)$, we have
    \[
        \sum_{i: \up_{\alpha + \beta}(i) > 0} \up_\alpha(i)  \log\left( \frac{\up_{\alpha + \beta}(i)}{q^\star(i)} \right) = \sum_{i: \up_\alpha(i) > 0} \up_\alpha(i)  \log\left( \frac{\up_\alpha(i)}{q^\star(i)} \right) + \sum_{i: \up_\alpha(i) > 0} \up_\alpha(i) \log\left( \frac{\up_{\alpha + \beta}(i)}{\up_\alpha(i)} \right)
    \]
    The first term is equal to $\KL(\up_\alpha, q^\star)$ and could be lower-bounded by $\Kinf(\up_\alpha, \mu, f)$ since $q^\star f \geq \mu$. For the second term we analyze the logarithm under condition $\up_\alpha(i) > 0 \iff \alpha_i > 0$. It holds
    \[
        \log\left( \frac{\up_{\alpha + \beta}(i)}{\up_\alpha(i)} \right) = \log\left(  \frac{\alpha_i + \beta_i}{\alpha_i}\right) + \log\left( \frac{\ualpha}{\ualpha + \ubeta} \right) \geq -\log\left(1 + \frac{\ubeta}{\ualpha} \right) \geq -\frac{\ubeta}{\ualpha},
    \]
   and hence
    \[
        \termB \geq \Kinf(\up_{\alpha}, \mu, f) -\frac{\ubeta}{\ualpha}.
    \]
    Finally, we have
    \[
        \Kinf(\up_{\alpha + \beta}, \mu, f) \geq \Kinf(\up_\alpha, \mu, f) - \frac{3 \ubeta}{\ualpha} \log\cdot\left( \frac{\ub \cdot (\ualpha + \ubeta)}{\ub - \mu} \right).
    \]
    For the upper bound we use the variational formula for $\Kinf$ and uniform upper bound (see \cite{honda2010asymptotically}) to obtain
    \begin{align*}
        (\ualpha + \ubeta)\Kinf(\up_{\alpha + \beta}, \mu, f) &=  \max_{\lambda \in [0, 1/(\ub - \mu)]} \sum_{i=0}^m (\alpha_i + \beta_i) \log\left( 1 - \lambda \cdot (f(i) - \mu) \right) \\
        &\leq \max_{\lambda \in [0, 1/(\ub - \mu)]} \sum_{i=0}^m \alpha_i \log\left( 1 - \lambda \cdot (f(i) - \mu) \right)  \\
        &\quad +  \max_{\lambda \in [0, 1/(\ub - \mu)]} \sum_{i=0}^m \beta_i \log\left( 1 - \lambda \cdot (f(i) - \mu) \right)  \\
        &\leq \ualpha \Kinf(\up_\alpha, \mu, f) + \ubeta \log\left( \frac{\ub}{\ub - \mu} \right).
    \end{align*}
\end{proof}

Let $X_1,\ldots,X_n$ be i.i.d. samples from categorical distribution $p \in \simplex_m$. We denote by $\nu_j$ the frequency of a value $j$ : $\nu_j = \sum_{i=1}^n \ind\{ X_i = j\}$ and denote by $\hp_n$ an empirical probability distribution defined as $\hp_n(j) = \nu_j / n$ for all $j \in \{0,\ldots,m\}$. Additionally, let us define $\alpha^0 \in \R^{m+1}$ as a prior counters such that $\sum_{j=0}^m \alpha^0_j = n_0$. Then we define a shifted empirical probability distribution as $\up_n(j) =  \alpha^n_j / (n + n_0)$, where $\alpha^n_j = \alpha^0_j + \nu_j$. In this setup we provide technical results on deviations and concentration for shifted distribution.

\begin{lemma}\label{lem:kinf_concentration}
    Let $f \colon \{0,\ldots,m\} \to [0, \ub]$ be a function such that $f(0) = 0$ and $f(m) = b$. Consider a real number $\mu \in (\up_n f, \ub)$ and define
    \[
        \gamma = \sqrt{\frac{\ub}{\ub - \mu}}\left( 16 \rme^2 + \log^2\left( \frac{\ub}{\ub - \mu} \right) \right).
    \]
    Assume that $\alpha^0_m \geq 1$. Then we have for any $0 < \delta < \gamma / 2$ we have
    \[
        \P\left[ n \Kinf(\up_n, \mu, f) \leq n\left(\Kinf(p, \mu, f) - \delta\right) - 3n_0\log\left( \frac{\ub \cdot (n + n_0)}{\ub - \mu} \right) \right] \leq  \exp(-n \delta^2 / (2\gamma)).
    \]
\end{lemma}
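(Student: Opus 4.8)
\emph{Reduction to the empirical distribution.} We may assume $\Kinf(p,\mu,f)\ge \delta+\tfrac{3n_0}{n}\log\!\left(\tfrac{\ub(n+n_0)}{\ub-\mu}\right)$, since otherwise the right-hand side of the event in the statement is negative while $\Kinf(\up_n,\mu,f)\ge 0$, so the event is empty and has probability $0$. In particular $\Kinf(p,\mu,f)>0$, whence $\mu>pf$ and the maximiser $\lambda^\star:=\lambda^\star(p,\mu,f)$ of the dual problem lies in $[0,1/(\ub-\mu)]$. Writing $\nu=(\nu_0,\dots,\nu_m)$ for the empirical counts we have $\up_n=\up_{\nu+\alpha^0}$ and $\nu_m+\alpha^0_m\ge\alpha^0_m\ge 1$, so Lemma~\ref{lem:distribution_shift} applied to the pair $(\nu,\alpha^0)$ gives
\[
  \Kinf(\up_n,\mu,f)\ \ge\ \Kinf(\hp_n,\mu,f)-\frac{3n_0}{n}\log\!\left(\frac{\ub(n+n_0)}{\ub-\mu}\right),
\]
so it suffices to prove $\P\!\left[\Kinf(\hp_n,\mu,f)\le \Kinf(p,\mu,f)-\delta\right]\le\exp(-n\delta^2/(2\gamma))$.

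\emph{An i.i.d.\ lower bound.} By the variational representation of $\Kinf$ (Theorem~6 of \cite{honda2010asymptotically}), $\Kinf(q,\mu,f)=\max_{\lambda\in[0,1/(\ub-\mu)]}\E_{X\sim q}[\log(1-\lambda(f(X)-\mu))]$. Evaluating the formula for $\hp_n$ at $\lambda^\star$ yields
\[
  \Kinf(\hp_n,\mu,f)\ \ge\ \frac1n\sum_{i=1}^n Z_i,\qquad Z_i:=\log\!\left(1-\lambda^\star(f(X_i)-\mu)\right),
\]
with $Z_1,\dots,Z_n$ i.i.d.\ and $\E[Z_1]=\Kinf(p,\mu,f)$. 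Put $\theta_i:=1-\lambda^\star(f(X_i)-\mu)$, so $Z_i=\log\theta_i$. From $\lambda^\star\le 1/(\ub-\mu)$ and $f\ge 0$ we get $\theta_i\le 1+\lambda^\star\mu\le \ub/(\ub-\mu)$, hence $Z_i\le\log(\ub/(\ub-\mu))$ and $\E_p[\theta]\le \ub/(\ub-\mu)$; moreover the first-order optimality condition for $\lambda^\star$ (valid since $\mu>pf$) rearranges to the self-normalising inequality $\E_{X\sim p}[1/\theta]\le 1$. The lower bound $\theta_i\ge 1-\lambda^\star(\ub-\mu)$ can be arbitrarily small, so $Z_i$ is \emph{not} bounded below by a constant depending only on $\ub,\mu$; handling this is the crux.

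\emph{One-sided concentration.} Apply a Chernoff argument to the centred sum of $W_i:=\E[Z_1]-Z_i$. With $M(s):=\E_p[\theta^{-s}]$ (the m.g.f.\ under $p$ of $-\log\theta$) we have $\psi(s):=\log\E[e^{sW_1}]=s\,\E[Z_1]+\log M(s)$, and since $t\mapsto t^s$ is concave on $(0,\infty)$ for $s\in[0,1]$, $M(s)=\E_p[(1/\theta)^s]\le(\E_p[1/\theta])^s\le 1$; thus $\psi(0)=\psi'(0)=0$. Also $\psi''(s)=\Var_{P_s}(\log\theta)$ for the tilted law $\rmd P_s\propto\theta^{-s}\,\rmd P$, and $M(s)\ge(\E_p[\theta])^{-s}\ge(\ub/(\ub-\mu))^{-s}\ge\sqrt{(\ub-\mu)/\ub}$ for $s\in[0,1/2]$. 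Bounding $\psi''(s)\le\E_{P_s}[\log^2\theta]=M(s)^{-1}\E_p[\theta^{-s}\log^2\theta]$ and splitting the support: on $\{\theta_j\ge(\ub-\mu)/\ub\}$ one has $|\log\theta_j|\le\log(\ub/(\ub-\mu))$, contributing at most $\log^2(\ub/(\ub-\mu))$ after dividing by $M(s)$; on $\{\theta_j<(\ub-\mu)/\ub\}$, writing $p(j)\theta_j^{-s}=\left(p(j)/\theta_j\right)\theta_j^{\,1-s}$ and using $\sup_{x\in(0,1)}x^{1/2}\log^2 x<\infty$ together with $\sum_j p(j)/\theta_j=\E_p[1/\theta]\le 1$, the contribution is at most a constant times $M(s)^{-1}\le\sqrt{\ub/(\ub-\mu)}$. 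Collecting the constants gives $\psi''(s)\le\gamma$ on $[0,1/2]$, so $\psi(s)\le s^2\gamma/2$ there, and $\P[\tfrac1n\sum_iW_i\ge\delta]\le\exp\!\left(-n\sup_{s\in[0,1/2]}(s\delta-s^2\gamma/2)\right)$; the unconstrained optimiser $s=\delta/\gamma$ is admissible exactly for $\delta<\gamma/2$, giving the claimed $\exp(-n\delta^2/(2\gamma))$.

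\emph{Main obstacle.} The delicate point is the variance bound: because $Z_i$ has unbounded downward range, Hoeffding would only yield an $n$-dependent exponent, and one must instead exploit the optimality identity $\E_{X\sim p}[1/\theta]\le 1$ to show that the atoms making $|Z_i|$ large carry correspondingly little probability, so that the exponentially tilted variances stay bounded by an absolute constant times $\log^2(\ub/(\ub-\mu))$. This control is available only for tilting parameters bounded away from $1$ (at $s=1$ the tilted variance genuinely diverges), which is exactly why the sub-Gaussian estimate is claimed only for $\delta<\gamma/2$; pinning down the explicit constants is then routine bookkeeping.
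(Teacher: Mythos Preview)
Your proof is correct and follows the same two-step structure as the paper: first use Lemma~\ref{lem:distribution_shift} (with $\alpha=\nu$, $\beta=\alpha^0$) to pass from $\up_n$ to the unshifted empirical measure $\hp_n$, then invoke a concentration inequality for $\Kinf(\hp_n,\mu,f)$. The paper's own proof is two lines: it simply cites Proposition~16 of \cite{garivier2018kl} for the second step, whereas you reprove that proposition from scratch via the Chernoff/variational argument. Your reconstruction is accurate---the key ingredients (the lower bound $\Kinf(\hp_n,\mu,f)\ge\tfrac1n\sum_i\log\theta_i$ at the fixed optimiser $\lambda^\star(p,\mu,f)$, the self-normalisation $\E_p[1/\theta]\le 1$ from first-order optimality, and the tilted-variance bound $\sup_{x\in(0,1)}x^{1/2}\log^2 x=16\rme^{-2}$ valid for $s\le 1/2$) are exactly those used in the cited reference, and your constants match the definition of $\gamma$ (note the statement's $16\rme^2$ is a typo for $16\rme^{-2}$, as confirmed by the definition of $\teps(n)$ in the body of the paper). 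So there is no genuine difference in approach; your write-up is simply more self-contained.
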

\begin{proof}
    Notice that $\hp_n$ is equal to $\up_\nu$ in terms of Lemma~\ref{lem:distribution_shift} and $\up_n$ is equal to $\up_{\nu + \alpha^0}$. Therefore, we may combine Lemma~\ref{lem:distribution_shift} and Proposition~16 by \cite{garivier2018kl} to conclude the statement.
\end{proof}

Additionally, we state the required deviation result for shifted empirical distributions.
\begin{lemma}\label{lem:kinf_deviation}
    For any $u \geq 0$ we have
    \[
        \P\left[(n+n_0)\Kinf(\up_n, pf, f) \geq n u + n_0 \log\left( \frac{\ub}{\ub - \mu} \right)\right] \leq \rme (2n+1) \rme^{-nu}.
    \]
\end{lemma}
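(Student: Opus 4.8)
The plan is to peel off the deterministic effect of the prior shift with Lemma~\ref{lem:distribution_shift} and then reduce to a dimension-free deviation bound for the un-shifted empirical $\Kinf$. Write $\nu=(\nu_0,\dots,\nu_m)$ for the count vector of the i.i.d.\ sample, so that in the notation of Lemma~\ref{lem:distribution_shift} one has $\hp_n=\up_{\nu}$ with $\sum_j\nu_j=n$, and $\up_n=\up_{\nu+\alpha^0}$ with $\sum_j\alpha^0_j=n_0$. Applying the upper bound of Lemma~\ref{lem:distribution_shift} with $\alpha=\nu$, $\beta=\alpha^0$ and second argument $pf$ (its hypothesis $\alpha_m+\beta_m=\nu_m+\alpha^0_m\ge 1$ holding under the standing assumption $\alpha^0_m\ge 1$) gives
\[
  (n+n_0)\,\Kinf(\up_n,pf,f)\;\le\; n\,\Kinf(\hp_n,pf,f)+n_0\log\!\Big(\frac{\ub}{\ub-pf}\Big),
\]
so that
\[
  \P\Big[(n+n_0)\Kinf(\up_n,pf,f)\ge nu+n_0\log\tfrac{\ub}{\ub-pf}\Big]\;\le\;\P\big[\,n\,\Kinf(\hp_n,pf,f)\ge nu\,\big].
\]
It therefore suffices to prove the dimension-free estimate $\P[\,n\,\Kinf(\hp_n,pf,f)\ge v\,]\le\rme\,(2n+1)\,\rme^{-v}$ for the empirical distribution $\hp_n$ of $n$ i.i.d.\ draws from $p$ and every $v\ge 0$.

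For this I would use the variational formula together with a family of mean-one martingales. Since $pf$ is the mean of $f$ under $p$, the dual representation (Theorem~6 of \cite{honda2010asymptotically}, in the form used throughout Section~\ref{sec:saddle-point}) reads
\[
  n\,\Kinf(\hp_n,pf,f)\;=\;\sup_{\lambda\in[0,\,1/(\ub-pf)]}\ \sum_{i=1}^{n}\log\!\big(1-\lambda(f(X_i)-pf)\big),
\]
and its value is strictly positive only when $\hp_n f<pf$, in which case the supremum is attained at an interior point $\hat\lambda$. For each fixed $\lambda\in[0,1/(\ub-pf)]$ the factors $1-\lambda(f(X_i)-pf)$ are i.i.d., nonnegative, and have mean $1$ under $p$ (because $\E_p[f(X)-pf]=0$); hence $M_n(\lambda):=\prod_{i=1}^{n}(1-\lambda(f(X_i)-pf))$ satisfies $\E[M_n(\lambda)]=1$, and $\lambda\mapsto\log M_n(\lambda)$ is concave with $\log M_n(0)=0$.

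The core step is to control $\sup_\lambda\log M_n(\lambda)$. This is the standard ``peeling in the dual variable'' device (the companion of Proposition~16 of \cite{garivier2018kl} that underlies Lemma~\ref{lem:kinf_concentration}): one either unions over a geometric grid of $\mathcal{O}(n)$ values of $\lambda$ in $[0,1/(\ub-pf)]$, applying Markov's inequality to each mean-one variable $M_n(\lambda)$, or integrates the $M_n(\lambda)$ against a suitable prior density on $[0,1/(\ub-pf)]$ to form a single mean-one mixture; in either case the concavity bound $\log M_n(\theta\hat\lambda)\ge\theta\log M_n(\hat\lambda)$ for $\theta\in[0,1]$, together with the crude estimate $n\,\Kinf(\hp_n,pf,f)=\log M_n(\hat\lambda)\le\hat\lambda\, n\,(pf-\hp_n f)\le\hat\lambda\, n\,\ub$ (which keeps $\hat\lambda$ away from $0$ on the event $\{n\Kinf\ge v\}$), transfers the tail bound from the grid points (resp.\ the mixture) to the supremum and produces a prefactor polynomial in $n$; chasing the constants then gives exactly $\rme(2n+1)$.

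The main obstacle is getting a prefactor that is independent of $m$. The naive route (bounding $\Kinf(\hp_n,pf,f)\le\KL(\hp_n,p)$, which is valid because $p$ is feasible, and then invoking a Sanov / type-counting argument) produces the useless prefactor $\binom{n+m}{m}$, exponential in the support size. Working instead with the $\mathcal{O}(n)$-point discretization of the one-dimensional dual variable $\lambda$ (or with a single mixture), rather than with the $\binom{n+m}{m}$ empirical types, is exactly what removes the dependence on $m$; the remaining effort is the bookkeeping needed to pin down the constant $\rme(2n+1)$. Alternatively, one may simply invoke the corresponding dimension-free $\Kinf$-deviation inequality of \cite{garivier2018kl} and combine it with the reduction of the first paragraph, exactly as in the proof of Lemma~\ref{lem:kinf_concentration}.
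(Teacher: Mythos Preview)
Your proposal is correct and follows essentially the same approach as the paper: peel off the prior via the upper bound of Lemma~\ref{lem:distribution_shift} and then invoke the dimension-free $\Kinf$-deviation inequality of \cite{garivier2018kl} (the paper simply cites it as Proposition~14 there, without reproducing the martingale/peeling argument you sketch).
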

\begin{proof}
    By Lemma~\ref{lem:distribution_shift}
    \[
        (n+n_0) \Kinf(\up_n, \mu, f) \leq n \Kinf(\hp_n, \mu, f) + n_0 \log\left( \frac{\ub}{\ub - \mu} \right)
    \]
    and then we may apply Proposition~14 by \cite{garivier2018kl} to $\Kinf(\hp_n, \mu, f)$.
\end{proof}

\bibliographystyle{plainnat}
\bibliography{ref.bib}

\begin{thebibliography}{53}
\providecommand{\natexlab}[1]{#1}
\providecommand{\url}[1]{\texttt{#1}}
\expandafter\ifx\csname urlstyle\endcsname\relax
  \providecommand{\doi}[1]{doi: #1}\else
  \providecommand{\doi}{doi: \begingroup \urlstyle{rm}\Url}\fi

\bibitem[Agrawal(1995)]{agrawal1995sample}
R.~Agrawal.
\newblock Sample mean based index policies with $o(\log n)$ regret for the
  multi-armed bandit problem.
\newblock \emph{Advances in Applied Probability}, 27\penalty0 (4):\penalty0
  1054--1078, 1995.

\bibitem[Agrawal and Goyal(2013)]{agrawal2013further}
Shipra Agrawal and Navin Goyal.
\newblock Further optimal regret bounds for thompson sampling.
\newblock In Carlos~M. Carvalho and Pradeep Ravikumar, editors,
  \emph{Proceedings of the Sixteenth International Conference on Artificial
  Intelligence and Statistics}, volume~31 of \emph{Proceedings of Machine
  Learning Research}, pages 99--107, Scottsdale, Arizona, USA, 29 Apr--01 May
  2013. PMLR.
\newblock URL \url{https://proceedings.mlr.press/v31/agrawal13a.html}.

\bibitem[Alfers and Dinges(1984)]{alfers1984normal}
Duncan Alfers and Hermann Dinges.
\newblock A normal approximation for beta and gamma tail probabilities.
\newblock \emph{Zeitschrift f{\"u}r Wahrscheinlichkeitstheorie und Verwandte
  Gebiete}, 65:\penalty0 399--420, 1984.
\newblock URL
  \url{https://link.springer.com/content/pdf/10.1007/BF00533744.pdf}.

\bibitem[Auer et~al.(2002)Auer, Cesa-Bianchi, Freund, and
  Schapire]{auer2002nonstochastic}
P.~Auer, N.~Cesa-Bianchi, Y.~Freund, and R.E. Schapire.
\newblock The nonstochastic multiarmed bandit problem.
\newblock \emph{SIAM Journal on Computing}, 32\penalty0 (1):\penalty0 48--77,
  2002.

\bibitem[Baudry et~al.(2021)Baudry, Saux, and Maillard]{baudry2021optimality}
Dorian Baudry, Patrick Saux, and Odalric-Ambrym Maillard.
\newblock From optimality to robustness: Adaptive re-sampling strategies in
  stochastic bandits.
\newblock In M.~Ranzato, A.~Beygelzimer, Y.~Dauphin, P.S. Liang, and J.~Wortman
  Vaughan, editors, \emph{Advances in Neural Information Processing Systems},
  volume~34, pages 14029--14041. Curran Associates, Inc., 2021.
\newblock URL
  \url{https://proceedings.neurips.cc/paper_files/paper/2021/file/75429d136f65d2d6168b9b6c5f6ec951-Paper.pdf}.

\bibitem[Baudry et~al.(2023)Baudry, Suzuki, and Honda]{baudry2023general}
Dorian Baudry, Kazuya Suzuki, and Junya Honda.
\newblock A general recipe for the analysis of randomized multi-armed bandit
  algorithms, 2023.

\bibitem[Bishop and Nasrabadi(2006)]{bishop2006pattern}
Christopher~M Bishop and Nasser~M Nasrabadi.
\newblock \emph{Pattern recognition and machine learning}, volume~4.
\newblock Springer, 2006.

\bibitem[Blei et~al.(2003)Blei, Ng, and Jordan]{blei2003latent}
David~M Blei, Andrew~Y Ng, and Michael~I Jordan.
\newblock Latent dirichlet allocation.
\newblock \emph{Journal of machine Learning research}, 3\penalty0
  (Jan):\penalty0 993--1022, 2003.

\bibitem[Borwein and Chan(2007)]{borwein2007uniform}
Jonathan~(Jon) Borwein and O-Yeat Chan.
\newblock Uniform bounds for the complementary incomplete gamma function.
\newblock \emph{Mathematical Inequalities and Applications}, 12, 01 2007.
\newblock \doi{10.7153/mia-12-10}.

\bibitem[Broniatowski(2017)]{broniatowski2017weighted}
Michel Broniatowski.
\newblock A weighted bootstrap procedure for divergence minimization problems.
\newblock In \emph{Analytical Methods in Statistics: AMISTAT, Prague, November
  2015}, pages 1--22. Springer, 2017.

\bibitem[Burnetas and Katehakis(1996)]{burnetas1996optimal}
A.N. Burnetas and M.N. Katehakis.
\newblock Optimal adaptive policies for sequential allocation problems.
\newblock \emph{Advances in Applied Mathematics}, 17\penalty0 (2):\penalty0
  122--142, 1996.

\bibitem[Capp\'{e} et~al.(2013)Capp\'{e}, Garivier, Maillard, Munos, and
  Stoltz]{cappe2013kullback}
Olivier Capp\'{e}, Aur\'{e}lien Garivier, Odalric-Ambrym Maillard, R\'{e}mi
  Munos, and Gilles Stoltz.
\newblock Kullback--leibler upper confidence bounds for optimal sequential
  allocation.
\newblock \emph{Ann. Statist.}, 41\penalty0 (3):\penalty0 1516--1541, 2013.
\newblock ISSN 0090-5364.
\newblock \doi{10.1214/13-AOS1119}.

\bibitem[Chapelle and Li(2011)]{chapelle2011empirical}
Olivier Chapelle and Lihong Li.
\newblock An empirical evaluation of thompson sampling.
\newblock In J.~Shawe-Taylor, R.~Zemel, P.~Bartlett, F.~Pereira, and K.Q.
  Weinberger, editors, \emph{Advances in Neural Information Processing
  Systems}, volume~24. Curran Associates, Inc., 2011.
\newblock URL
  \url{https://proceedings.neurips.cc/paper_files/paper/2011/file/e53a0a2978c28872a4505bdb51db06dc-Paper.pdf}.

\bibitem[Cho and Cho(2001)]{cho2001volume}
Yunhi Cho and Eungchun Cho.
\newblock The volume of simplices clipped by a half space.
\newblock \emph{Applied mathematics letters}, 14\penalty0 (6):\penalty0
  731--735, 2001.

\bibitem[Congdon(2014)]{congdon2014applied}
Peter Congdon.
\newblock \emph{Applied bayesian modelling}.
\newblock John Wiley \& Sons, 2014.

\bibitem[Csiszar and Matus(2003)]{csiszar2003information}
I.~Csiszar and F.~Matus.
\newblock Information projections revisited.
\newblock \emph{IEEE Transactions on Information Theory}, 49\penalty0
  (6):\penalty0 1474--1490, 2003.
\newblock \doi{10.1109/TIT.2003.810633}.

\bibitem[Csiszár and Shields(2004)]{csiszar2004information}
I.~Csiszár and P.C. Shields.
\newblock Information theory and statistics: A tutorial.
\newblock \emph{Foundations and Trends® in Communications and Information
  Theory}, 1\penalty0 (4):\penalty0 417--528, 2004.
\newblock ISSN 1567-2190.
\newblock \doi{10.1561/0100000004}.
\newblock URL \url{http://dx.doi.org/10.1561/0100000004}.

\bibitem[Dentcheva(2006)]{dentcheva2006optimization}
Darinka Dentcheva.
\newblock \emph{Optimization Models with Probabilistic Constraints}, pages
  49--97.
\newblock Springer London, London, 2006.
\newblock ISBN 978-1-84628-095-5.
\newblock \doi{10.1007/1-84628-095-8\_2}.
\newblock URL \url{https://doi.org/10.1007/1-84628-095-8\_2}.

\bibitem[Dirksen(2015)]{dirksen2015sections}
Hauke Carl-Erwin Dirksen.
\newblock \emph{Sections of simplices and cylinders: Volume formulas and
  estimates}.
\newblock PhD thesis, 2015.
\newblock URL \url{https://macau.uni-kiel.de/receive/diss_mods_00018308}.

\bibitem[Evans and Garzepy(2018)]{evans2018measure}
Lawrence~C Evans and Ronald~F Garzepy.
\newblock \emph{Measure theory and fine properties of functions}.
\newblock Routledge, 2018.

\bibitem[Fang and Koike(2021)]{fang2021high}
Xiao Fang and Yuta Koike.
\newblock High-dimensional central limit theorems by stein's method.
\newblock \emph{The Annals of Applied Probability}, 31\penalty0 (4):\penalty0
  1660--1686, 2021.

\bibitem[Ferguson(1973)]{ferguson1973bayesian}
Thomas~S Ferguson.
\newblock A bayesian analysis of some nonparametric problems.
\newblock \emph{The annals of statistics}, pages 209--230, 1973.

\bibitem[Ganesh and O'Connell(1999)]{ganesh1999inverse}
Ayalvadi Ganesh and Neil O'Connell.
\newblock An inverse of sanov's theorem.
\newblock \emph{Statistics \& Probability Letters}, 42\penalty0 (2):\penalty0
  201--206, 1999.
\newblock ISSN 0167-7152.
\newblock \doi{https://doi.org/10.1016/S0167-7152(98)00215-6}.
\newblock URL
  \url{https://www.sciencedirect.com/science/article/pii/S0167715298002156}.

\bibitem[Ganesh and O'Connell(2000)]{ayalvadi2000large}
Ayalvadi~J. Ganesh and Neil O'Connell.
\newblock {A large-deviation principle for Dirichlet posteriors}.
\newblock \emph{Bernoulli}, 6\penalty0 (6):\penalty0 1021 -- 1034, 2000.

\bibitem[Garivier et~al.(2018)Garivier, M{\'e}nard, and
  Stoltz]{garivier2016explore}
A.~Garivier, P.~M{\'e}nard, and G.~Stoltz.
\newblock Explore first, exploit next: {T}he true shape of regret in bandit
  problems.
\newblock \emph{Mathematics of Operations Research}, 2018.
\newblock To appear; meanwhile, see {arXiv} preprint {arXiv}:1602.07182.

\bibitem[Garivier and Capp{\'e}(2011)]{garivier2011kl}
Aur{\'e}lien Garivier and Olivier Capp{\'e}.
\newblock The kl-ucb algorithm for bounded stochastic bandits and beyond.
\newblock In \emph{Proceedings of the 24th annual conference on learning
  theory}, pages 359--376, 2011.

\bibitem[Garivier et~al.(2022)Garivier, Hadiji, Ménard, and
  Stoltz]{garivier2018kl}
Aurélien Garivier, Hédi Hadiji, Pierre Ménard, and Gilles Stoltz.
\newblock Kl-ucb-switch: Optimal regret bounds for stochastic bandits from both
  a distribution-dependent and a distribution-free viewpoints.
\newblock \emph{Journal of Machine Learning Research}, 23\penalty0
  (179):\penalty0 1--66, 2022.
\newblock URL \url{http://jmlr.org/papers/v23/20-717.html}.

\bibitem[Ghosal and Van~der Vaart(2017)]{ghosal2017fundamentals}
Subhashis Ghosal and Aad Van~der Vaart.
\newblock \emph{Fundamentals of nonparametric Bayesian inference}, volume~44.
\newblock Cambridge University Press, 2017.

\bibitem[G{\"o}tze et~al.(2019)G{\"o}tze, Naumov, Spokoiny, and
  Ulyanov]{gotze2019largeball}
Friedrich G{\"o}tze, Alexey Naumov, Vladimir Spokoiny, and Vladimir Ulyanov.
\newblock {Large ball probabilities, Gaussian comparison and
  anti-concentration}.
\newblock \emph{Bernoulli}, 25\penalty0 (4A):\penalty0 2538 -- 2563, 2019.
\newblock \doi{10.3150/18-BEJ1062}.
\newblock URL \url{https://doi.org/10.3150/18-BEJ1062}.

\bibitem[Guo et~al.(2007)Guo, Qi, and Srivastava]{guo2007necessary}
Senlin Guo, Feng Qi, and Hari Srivastava.
\newblock Necessary and sufficient conditions for two classes of functions to
  be logarithmically completely monotonic.
\newblock \emph{Integral Transforms and Special Functions}, 18:\penalty0
  819--826, 11 2007.
\newblock \doi{10.1080/10652460701528933}.

\bibitem[Hijazi and Jernigan(2009)]{hijazi2009modelling}
Rafiq Hijazi and Robert Jernigan.
\newblock Modelling compositional data using dirichlet regression models.
\newblock \emph{Journal of Applied Probability and Statistics}, 4:\penalty0
  77--91, 01 2009.

\bibitem[Honda and Takemura(2010)]{honda2010asymptotically}
Junya Honda and Akimichi Takemura.
\newblock An asymptotically optimal bandit algorithm for bounded support
  models.
\newblock In Adam~Tauman Kalai and Mehryar Mohri, editors, \emph{COLT}, pages
  67--79. Omnipress, 2010.
\newblock ISBN 978-0-9822529-2-5.
\newblock URL
  \url{http://dblp.uni-trier.de/db/conf/colt/colt2010.html#HondaT10}.

\bibitem[Honda and Takemura(2015)]{honda2015nonasymptotic}
Junya Honda and Akimichi Takemura.
\newblock Non-asymptotic analysis of a new bandit algorithm for semi-bounded
  rewards.
\newblock \emph{Journal of Machine Learning Research}, 16\penalty0
  (113):\penalty0 3721--3756, 2015.
\newblock URL \url{http://jmlr.org/papers/v16/honda15a.html}.

\bibitem[Korda et~al.(2013)Korda, Kaufmann, and Munos]{korda2013thompson}
N.~Korda, E.~Kaufmann, and R.~Munos.
\newblock Thompson sampling for $1$--dimensional exponential family bandits.
\newblock In \emph{Proceedings of the 26th International Conference on Neural
  Information Processing Systems}, NIPS'13, pages 1448--1456, 2013.

\bibitem[Lai and Robbins(1985)]{Lai1985asymptotically}
T.L. Lai and H.~Robbins.
\newblock Asymptotically efficient adaptive allocation rules.
\newblock \emph{Advances in Applied Mathematics}, 6:\penalty0 4--22, 1985.

\bibitem[Lange(1995)]{lange1995applications}
K~Lange.
\newblock Applications of the dirichlet distribution to forensic match
  probabilities.
\newblock \emph{Genetica}, 96\penalty0 (1-2):\penalty0 107--117, 1995.

\bibitem[Lasserre(2020)]{lasserre2020simple}
Jean-Bernard Lasserre.
\newblock Simple formula for integration of polynomials on a simplex.
\newblock \emph{BIT Numerical Mathematics}, 61, 08 2020.
\newblock \doi{10.1007/s10543-020-00828-x}.

\bibitem[Marchal and Arbel(2017)]{marchal2017sub}
Olivier Marchal and Julyan Arbel.
\newblock On the sub-gaussianity of the beta and dirichlet distributions.
\newblock \emph{Electronic Communications in Probability}, 22\penalty0 (paper
  no. 54):\penalty0 1--14, 2017.

\bibitem[Murphy(2022)]{murphy2022probabilistic}
Kevin~P. Murphy.
\newblock \emph{Probabilistic Machine Learning: An introduction}.
\newblock MIT Press, 2022.
\newblock URL \url{probml.ai}.

\bibitem[Ng et~al.(2011)Ng, Tian, and Tang]{ng2011dirichlet}
K.W. Ng, G.L. Tian, and M.L. Tang.
\newblock \emph{Dirichlet and Related Distributions: Theory, Methods and
  Applications}.
\newblock Wiley Series in Probability and Statistics. Wiley, 2011.
\newblock ISBN 9781119998419.
\newblock URL \url{https://books.google.ru/books?id=k8GS868oyo4C}.

\bibitem[Olver(1997)]{olver1997asymptotics}
Frank W.~J. Olver.
\newblock \emph{Asymptotics and special functions}.
\newblock AKP Classics. A K Peters, Ltd., Wellesley, MA, 1997.
\newblock ISBN 1-56881-069-5.
\newblock Reprint of the 1974 original [Academic Press, New York; MR0435697 (55
  \#8655)].

\bibitem[Osband and Van~Roy(2017)]{osband2017why}
Ian Osband and Benjamin Van~Roy.
\newblock Why is posterior sampling better than optimism for reinforcement
  learning?
\newblock In Doina Precup and Yee~Whye Teh, editors, \emph{Proceedings of the
  34th International Conference on Machine Learning}, volume~70 of
  \emph{Proceedings of Machine Learning Research}, pages 2701--2710. PMLR,
  06--11 Aug 2017.
\newblock URL \url{https://proceedings.mlr.press/v70/osband17a.html}.

\bibitem[Osband et~al.(2013)Osband, Russo, and Van~Roy]{osband2013more}
Ian Osband, Daniel Russo, and Benjamin Van~Roy.
\newblock (more) efficient reinforcement learning via posterior sampling.
\newblock In C.J. Burges, L.~Bottou, M.~Welling, Z.~Ghahramani, and K.Q.
  Weinberger, editors, \emph{Advances in Neural Information Processing
  Systems}, volume~26. Curran Associates, Inc., 2013.
\newblock URL
  \url{https://proceedings.neurips.cc/paper_files/paper/2013/file/6a5889bb0190d0211a991f47bb19a777-Paper.pdf}.

\bibitem[Riou and Honda(2020)]{riou20a}
Charles Riou and Junya Honda.
\newblock Bandit algorithms based on thompson sampling for bounded reward
  distributions.
\newblock In Aryeh Kontorovich and Gergely Neu, editors, \emph{Proceedings of
  the 31st International Conference on Algorithmic Learning Theory}, volume 117
  of \emph{Proceedings of Machine Learning Research}, pages 777--826. PMLR, 08
  Feb--11 Feb 2020.
\newblock URL \url{https://proceedings.mlr.press/v117/riou20a.html}.

\bibitem[Rubin(1981)]{rubin1981bayesian}
Donald~B Rubin.
\newblock The bayesian bootstrap.
\newblock \emph{The annals of statistics}, pages 130--134, 1981.

\bibitem[Sanov(1961)]{sanov1961probability}
Ivan~Nicolaevich Sanov.
\newblock On the probability of large deviations of random variables.
\newblock \emph{Selected Translations in Mathematical Statistics and
  Probability}, 1:\penalty0 213--244, 1961.

\bibitem[Somerville et~al.(1997)Somerville, Dietrich, and
  Mazzuchi]{somerville1997bayesian}
Ian~F. Somerville, Duane~L. Dietrich, and Thomas~A. Mazzuchi.
\newblock Bayesian reliability analysis using the dirichlet prior distribution
  with emphasis on accelerated life testing run in random order.
\newblock \emph{Nonlinear Analysis: Theory, Methods \& Applications},
  30\penalty0 (7):\penalty0 4415--4423, 1997.
\newblock ISSN 0362-546X.
\newblock \doi{https://doi.org/10.1016/S0362-546X(96)00120-4}.
\newblock URL
  \url{https://www.sciencedirect.com/science/article/pii/S0362546X96001204}.
\newblock Proceedings of the Second World Congress of Nonlinear Analysts.

\bibitem[Talebi and Maillard(2018)]{talebi2018variance}
Mohammad~Sadegh Talebi and Odalric-Ambrym Maillard.
\newblock Variance-aware regret bounds for undiscounted reinforcement learning
  in mdps.
\newblock In \emph{Algorithmic Learning Theory}, pages 770--805, 2018.

\bibitem[Teh et~al.(2006)Teh, Jordan, Beal, and Blei]{teh2006hierarchical}
Yee~Whye Teh, Michael~I Jordan, Matthew~J Beal, and David~M Blei.
\newblock Hierarchical dirichlet processes.
\newblock \emph{Journal of the American Statistical Association}, 101\penalty0
  (476):\penalty0 1566--1581, 2006.
\newblock \doi{10.1198/016214506000000302}.
\newblock URL \url{https://doi.org/10.1198/016214506000000302}.

\bibitem[Thompson(1933)]{thompson1933on}
William~R Thompson.
\newblock On the likelihood that one unknown probability exceeds another in
  view of the evidence of two samples.
\newblock \emph{Biometrika}, 25\penalty0 (3-4):\penalty0 285--294, 12 1933.
\newblock ISSN 0006-3444.
\newblock \doi{10.1093/biomet/25.3-4.285}.
\newblock URL \url{https://doi.org/10.1093/biomet/25.3-4.285}.

\bibitem[Tiapkin et~al.(2022)Tiapkin, Belomestny, Moulines, Naumov, Samsonov,
  Tang, Valko, and Menard]{tiapkin2022dirichlet}
Daniil Tiapkin, Denis Belomestny, Eric Moulines, Alexey Naumov, Sergey
  Samsonov, Yunhao Tang, Michal Valko, and Pierre Menard.
\newblock From {D}irichlet to rubin: Optimistic exploration in {RL} without
  bonuses.
\newblock In Kamalika Chaudhuri, Stefanie Jegelka, Le~Song, Csaba Szepesvari,
  Gang Niu, and Sivan Sabato, editors, \emph{Proceedings of the 39th
  International Conference on Machine Learning}, volume 162 of
  \emph{Proceedings of Machine Learning Research}, pages 21380--21431. PMLR,
  17--23 Jul 2022.
\newblock URL \url{https://proceedings.mlr.press/v162/tiapkin22a.html}.

\bibitem[Vershynin(2018)]{vershynin2018high}
Roman Vershynin.
\newblock \emph{High-Dimensional Probability: An Introduction with Applications
  in Data Science}.
\newblock Cambridge Series in Statistical and Probabilistic Mathematics.
  Cambridge University Press, 2018.
\newblock \doi{10.1017/9781108231596}.

\bibitem[Zubkov and Serov(2013)]{zubkov2013complete}
Andre~M Zubkov and Aleksandr~A Serov.
\newblock A complete proof of universal inequalities for the distribution
  function of the binomial law.
\newblock \emph{Theory of Probability \& Its Applications}, 57\penalty0
  (3):\penalty0 539--544, 2013.

\end{thebibliography}

\end{document}